\newcommand{\tmmathbf}[1]{\ensuremath{\boldsymbol{#1}}}
\newcommand{\tmop}[1]{\ensuremath{\operatorname{#1}}}
\newtheorem{theorem}{Theorem}[section]
\newtheorem{corollary}[theorem]{Corollary}
\newtheorem{lemma}[theorem]{Lemma}
\newtheorem{definition}[theorem]{Definition}
\numberwithin{equation}{section}
\newenvironment{proof}{\noindent\textbf{Proof\ }}{\hspace*{\fill}$\Box$\medskip}
\begin{document}

\title{The sovability of norm, bilinear and quadratic equations over finite
fields via spectra of graphs}\author{Le Anh Vinh\\
Mathematics Department\\
Harvard University\\
Cambridge, MA 02138, USA\\
vinh@math.harvard.edu}\maketitle

\begin{abstract}
  In this paper we will give a unified proof of several results on the sovability of systems of certain equations over finite fields, which were recently obtained by Fourier analytic methods.
  Roughly speaking, we show that almost all systems of norm,
  bilinear or quadratic equations over finite fields are solvable in any large
  subset of vector spaces over finite fields.
\end{abstract}

\section{Introduction}

The main purpose of this paper is to give a unified proof of several results
on the solvability of systems of certain equations over finite fields, which were recently obtained by Fourier analytic methods.
We will see that after appropriate graph theoretic results are developed, many old and new results
immediately follows. In this section, we discuss the motivation and background results for our work.

Let $\mathbbm{F}_q$ denote a finite field with $q$ elements, where $q$, a power of an odd prime, is viewed as an asymptotic parameter. For $\mathcal{E} \subset \mathbbm{F}^d_q$ ($d \geq 2$), the finite
analogue of the classical Erd\H{o}s distance problem is to determine the smallest
possible cardinality of the set
\[ \Delta (\mathcal{E}) =\{\|\tmmathbf{x}-\tmmathbf{y}\|= (x_1 - y_1)^2 +
   \ldots + (x_d - y_d)^2 : \tmmathbf{x}, \tmmathbf{y} \in \mathcal{E}\}
   \subset \mathbbm{F}_q . \]
The first non-trivial result on the Erd\H{o}s distance problem in vector spaces over
finite fields is due to Bourgain, Katz, and Tao (\cite{bkt}), who showed that if $q$ is a prime, $q \equiv 3$ (mod $4$), then for
every $\varepsilon > 0$ and $\mathcal{E} \subset \mathbbm{F}^2_q$ with
$|\mathcal{E}| \leq C_{\varepsilon} q^2$, there exists $\delta > 0$ such that
$| \Delta (\mathcal{E}) | \geq C_{\delta} q^{\frac{1}{2} + \delta}$ for some
constants $C_{\varepsilon}, C_{\delta}$. The relationship between
$\varepsilon$ and $\delta$ in their arguments, however, is difficult to
determine. In addition, it is quite subtle to go up to higher dimensional cases with these arguments. Iosevich and Rudnev (\cite{ir}) used
Fourier analytic methods to show that there are absolute constants $c_1, c_2 >
0$ such that for any odd prime power $q$ and any set $\mathcal{E} \subset
\mathbbm{F}^d_q$ of cardinality $|\mathcal{E}| \geq c_1 q^{d / 2}$, we have
\[ | \Delta (\mathcal{E}) | \geq c \min \left\{ q, q^{\frac{d - 1}{2}}
   |\mathcal{E}| \right\} . \]
In \cite{vinh-erdos}, the author gave another proof of this result using the graph theoretic method (see also \cite{van} for a similar proof). The (common) main step of these proofs is to estimate the number of occurrences of a fixed distance. It was shown that for
a fixed distance, given that the point set is large, the number of occurrences of
any fixed distance is close to the expected number. This implies that there are many distinct distances occur in a large
point set. In the case of real number field, most of the known results, however, are actually proved in a
stronger form. In order to show that there are at least $g(n)$ distinct distances determined
by an $n$-point set in the plane, one usually proves that for any $n$-point set $P$, there exists
a point $p \in P$ that determines at least $g(n)$ distinct distances to $P$. Chapman et al. (\cite{chapman}) obtained an analogous result in the finite
field setting. They also proved a similar result for the pinned dot
product sets $\Pi_{\tmmathbf{y}} (\mathcal{E}) =\{\tmmathbf{x} \cdot
\tmmathbf{y}: \tmmathbf{x} \in \mathcal{E}\}$. In this paper, we will derive
these results using spectral graph methods.

A classical result due to Furstenberg, Katznelson and Weiss (\cite{fkw}) states that
if $\mathcal{E} \subset \mathbbm{R}^2$ of positive upper Lebesgue density,
then for any $\delta > 0$, the $\delta$-neighborhood of $\mathcal{E}$ contains
a congruent copy of a sufficiently large dilate of every three-point
configuration. An example of Bourgain (\cite{b1}) showed that it is not possible to
replace the thickened set $\mathcal{E}_{\delta}$ by $\mathcal{E}$ for
arbitrary three-point configurations. In the case of $k$-simplex, that is the
$k + 1$ points spanning a $k$-dimensional subspace, Bourgain (\cite{b1}), using
Fourier analytic techniques, showed that a set $\mathcal{E}$ of positive
upper Lebesgue density always contains a sufficiently large dilate of every
non-degenerate $k$-point configuration where $k < d$. In the case $k = d$, the
problem still remains open. Using Fourier analytic method, Akos Magyar (\cite{magyar1,magyar2}) considered this problem over the integer lattice $\mathbbm{Z}^d$. He showed
that a set of positive density will contain a congruent copy of every large
dilate of a non-degenerate $k$-simplex where $d > 2 k + 4$.

Hart and Iosevich (\cite{hi}) made the first investigation in an analog of this
question in finite field geometries. Let $P_k$ denote a $k$-simplex. Given
another $k$-simplex $P^{'}_k$, we say $P_k \sim P_k^{'}$ if there exist $\tau \in
\mathbbm{F}_q^d$, and $O \in SO_d (\mathbbm{F}_q)$, the set of $d$-by-$d$
orthogonal matrices over $\mathbbm{F}_q$, such that $P^{'}_k = O(P_k) + \tau$.
Under this equivalent relation, Hart and Iosevich (\cite{hi}) observed that one may
specify a simplex by the distances determined by its vertices. They showed
that if $\mathcal{E} \subset \mathbbm{F}_q^d$ ($d \geq \binom{k + 1}{2}$) of
cardinality $|\mathcal{E}| \gtrsim C q^{\frac{k d}{k + 1} + \frac{k}{2}}$ then
$\mathcal{E}$ contains a congruent copy of every $k$-simplices (with the exception
of simplices with zero distances). Using graph theoretic method, the author
(\cite{vinh-pg}) showed that the same result holds for $d \geq 2 k$ and $|\mathcal{E}| \gg
q^{\frac{d - 1}{2} + k}$. Here, and throughout, $X \lesssim Y$ means that there exists $C >0$ such that $X \leq C Y$, and $X \ll Y$ means that $X = o(Y)$. Note that serious difficulties arise when the size of simplex is sufficiently large with respect to the ambient dimension. Even
in the case of triangles, the result in \cite{vinh-pg} is only non-trivial for $d \geq
4$. Covert, Hart, Iosevich, and Uriarte-Tuero (\cite{covert}) addressed the case of
triangles in plane over finite fields. They showed that if $\mathcal{E}$ has
density $\geq \rho$ for some $C q^{- 1 / 2} \leq \rho \leq 1$ with a
sufficiently large constant $C > 0$, then the set of triangles determined by
$\mathcal{E}$, upto congruence, has density $\geq c \rho$. In \cite{vinh-tfs}, the author
studied the remaining case: triangles in three-dimensional vector spaces over
finite fields. Using a combination of graph theory method and Fourier
analytic techniques, the author showed that if $\mathcal{E} \subset \mathbbm{F}_q^d$ ($d
\geq 3$) of cardinality $|\mathcal{E}| \gtrsim C q^{\frac{d + 2}{2}}$,
the set of triangles, up to congruence, has density greater than $c$. Using
Fourier analytic techniques, Chapman et al (\cite{chapman}) extended this result to higher dimensional cases. More precisely, they showed that if $|\mathcal{E}|
\gtrsim q^{\frac{d + k}{2}}$ ($d \geq k$) then the set of $k$-simplices, up to
congruence, has density greater than $c$. They also obtained a stronger result
when $\mathcal{E}$ is a subset of the $d$-dimensional unit sphere $S^d
=\{\tmmathbf{x} \in \mathbbm{F}_q^d : \|\tmmathbf{x}\|= 1\}$. In particular,
it was proven (\cite[Theorem 2.15]{chapman}) that if $\mathcal{E} \subset S^d$ of cardinality $|\mathcal{E}| \gtrsim q^{\frac{d + k - 1}{2}}$ then
$\mathcal{E}$ contains a congruent copy of a positive proportion of all
$k$-simplices. In this paper, we will obtain similar results in a more general
setting. Let $Q$ be a non-degenerate quadratic form on $\mathbbm{F}_q^d$. The
$Q$-distance between two points $\tmmathbf{x}, \tmmathbf{y} \in
\mathbbm{F}_q^d$ is defined by $Q (\tmmathbf{x}-\tmmathbf{y}$). We consider
the system $\mathcal{L}$ of $\binom{k}{2}$ equations
\begin{equation}\label{q-i-s} Q (\tmmathbf{x}_i -\tmmathbf{x}_j) = \lambda_{i j}, \tmmathbf{x}_i \in
   \mathcal{E}, i = 1, \ldots, k \end{equation}
over $\mathbbm{F}_q^d$, with variables from arbitrary set $\mathcal{E} \subset
\mathbbm{F}_q^d$. We show that if $|\mathcal{E}| \gg q^{\frac{d - 1}{2} + k -
1}$ then the system (\ref{q-i-s}) is solvable for all $\lambda_{i j} \in \mathbbm{F}_q^{*}$,
and if $|\mathcal{E}| \gg q^{(d + k) / 2}$ then that system \ref{q-i-s} is solvable for at
least $(1 - o (1)) q^{\binom{k}{2}}$ possible choices of $\lambda_{i j} \in
\mathbbm{F}_q$. 

A related question that has recently received attention is the
following. Let $\mathcal{A} \subset \mathbbm{F}_q$, how large does
$\mathcal{A}$ need to be to ensure that $\mathbbm{F}_q^{\ast} \subset
\mathcal{A} \cdot \mathcal{A}+ \ldots +\mathcal{A} \cdot \mathcal{A}$ ($d$
times). Bourgain (\cite{b2}) showed that if $\mathcal{A} \subset \mathbbm{F}_q$ of
cardinality $|\mathcal{A}| \geq C q^{3 / 4}$ then $\mathcal{A} \cdot
\mathcal{A}+\mathcal{A} \cdot \mathcal{A}+\mathcal{A} \cdot
\mathcal{A}=\mathbbm{F}_q$. Glibichuk and Konyagin (\cite{gk}) proved in the case of
prime fields $\mathbbm{Z}_p$ that for $d = 8$, one can take $|\mathcal{A}| >
\sqrt{q}$. Glibichuk (\cite{g}) then extended this result to arbitrary finite
fields. Note that this question can be stated in a more general setting. Let
$\mathcal{E} \subset \mathbbm{F}_q^d$, how large does $\mathcal{E}$ need to
ensure that the equation
\[ \tmmathbf{x} \cdot \tmmathbf{y}= \lambda, \tmmathbf{x}, \tmmathbf{y} \in
   \mathcal{E} \]
is solvable for any $\lambda \in \mathbbm{F}_q^{\ast}$. Hart and Iosevich
(\cite{hi}), using exponential sums, showed that one can take $|\mathcal{E}| > q^{(d +
1) / 2}$ for any $d \geq 2$. In this paper, we will give another proof of this
result using spectral graph methods.

In analogy with the study of simplices in vector spaces over finite fields,
the author (\cite{vinh-beq}) studied the sovability of systems of bilinear equations over
finite fields. More precisely, for any non-degenerate bilinear form $B (\cdot,
\cdot)$ in $\mathbbm{F}_q^d$, we consider the following system of $l \leq
\binom{k}{2}$ equations
\begin{equation}\label{b-i-s} B (\tmmathbf{a}_i, \tmmathbf{a}_j) = \lambda_{i j}, a_i \in \mathcal{E}, i
   = 1, \ldots, k \end{equation}
over $\mathbbm{F}_q^d$, with variables from an arbitrary set $\mathcal{E} \subset
\mathbbm{F}_q^d$. Using character sum machinery and methods from graph theory,
the author (\cite{vinh-beq}) showed that if each variable in the system (\ref{b-i-s}) appears in at
most $t \leq k - 1$ equations and $|\mathcal{E}| \gg q^{\frac{d - 1}{2} +
t}$, then for any $\lambda_{i j} \in \mathbbm{F}_q^{\ast}$, the system (\ref{b-i-s}) has
$(1 + o (1)) q^{- l} |\mathcal{E}|^k$ solutions. Again, serious
difficulties arise when the number of equations that each variable involves is
sufficiently large with respect to the ambient dimension. In particular, that
result is only non-trivial in the range of $d \geq 2 t$. In the case of three
variables and three equations, the author also proved (\cite[Theorem 1.4]{vinh-beq}) that the
system (\ref{b-i-s}) is solvable for $(1 - o (1)) q^3$ triples $(\lambda_{12},
\lambda_{23}, \lambda_{31}) \in (\mathbbm{F}_q^{\ast})^3$ if $|\mathcal{E}|
\gg q^{\frac{d + 2}{2}}$. In this paper, we will extend this result to systems
with many variables. More precisely, we will show that if $\mathcal{E} \subset
\mathbbm{F}_q^d$ of cardinality $|\mathcal{E}| \gg q^{\frac{d + k}{2}}$ then
the system (\ref{b-i-s}) of all $\binom{k}{2}$ equations is solvable for $(1 - o (1))
q^{\binom{k}{2}}$ possible choices of $\lambda_{i j} \in \mathbbm{F}_q$, $1
\leq i < j \leq k$.

We remark here that one can also obtain this result using Fourier analytic methods (for
example, using \cite[Theorem 2.14]{chapman} instead of \cite[Theorem 2.12]{chapman} in the proof of \cite[Theorem 2.13]{chapman}). However, techniques involved in difference problems are
considerable in Fourier analytic proofs. The main advantage of our approach is that we
can obtain all the aforementioned results at once, after computing the
eigenvalues of appropriate graphs. We will also demonstrate our method by
some related results on norm equations and sum-product equations over finite
fields.

\section{Statement of results}\label{resutls-sesb}

\subsection{Subgraphs in $(n, d, \lambda)$-graphs}\label{graph-result-sesb}

For a graph $G$, let $\lambda_1 \geq \lambda_2 \geq \ldots \geq \lambda_k$ be
the eigenvalues of its adjacency matrix. The quantity $\lambda (G) = \max
\{\lambda_2, - \lambda_n \}$ is called the second eigenvalue of $G$. A graph $G
= (V, E)$ is called an $(n, d, \lambda)$-graph if it is $d$-regular, has $n$
vertices, and the second eigenvalue of $G$ is at most $\lambda$. It is well
known (see \cite[Chapter 9]{as} for more details) that if $\lambda$ is much smaller than the
degree $d$, then $G$ has certain random-like properties. Noga Alon (\cite[Theorem 4.10]{ks}) proved
that every large subset of the set of vertices of $(n, d, \lambda)$-graphs
contains the ``correct'' number of copies of any fixed sparse graph.

\begin{theorem} (\cite[Theorem 4.10]{ks}) \label{a-sesb}
  Let $H$ be a fixed graph with $r$ edges, $s$ vertices, and maximum degree
  $\Delta$, and let $G = (V, E)$ be an $(n, d, \lambda)$-graph where $d \leq
  0.9 n$. Let $m < n$ satisfy $m \gg \lambda (n / d)^{\Delta}$. Then, for
  every subset $V' \subset V$ of cardinality $m$, the number of (not
  necessarily induced) copies of $H$ in $V'$ is
  \[ (1 + o (1)) \frac{m^s}{| \tmop{Aut} (H) |} \left( \frac{d}{n} \right)^r .
  \]
\end{theorem}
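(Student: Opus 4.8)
The plan is to reduce everything to the expander mixing lemma and then count embeddings of $H$ by inserting its vertices one at a time. Since the second eigenvalue of $G$ is at most $\lambda$, write the adjacency matrix $A$ so that $A\mathbf{1}=d\mathbf{1}$ and $\|A\|\le\lambda$ on the orthogonal complement of $\mathbf{1}$. I would first record the standard consequence: for all $B,C\subseteq V$,
\[
  \left| e(B,C)-\frac{d}{n}|B|\,|C|\right|\le\lambda\sqrt{|B|\,|C|},
\]
obtained by decomposing $\mathbbm{1}_B$ and $\mathbbm{1}_C$ into their components along $\mathbf{1}$ and along $\mathbf{1}^{\perp}$ and bounding the cross term by $\lambda\|\mathbbm{1}_B^{\perp}\|\,\|\mathbbm{1}_C^{\perp}\|$. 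This, together with the induced variance bound $\sum_{y\in V}\big(\deg_{V'}(y)-\tfrac{d}{n}m\big)^2\le\lambda^2 m$ (which follows from $\|A\mathbbm{1}_{V'}-\tfrac{d}{n}m\mathbf{1}\|\le\lambda\sqrt{m}$), is the only place the spectral hypothesis enters.

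Next I would pass from unlabeled copies of $H$ to labeled ones: the number of copies equals $|\mathrm{Aut}(H)|^{-1}$ times the number of injective edge-preserving maps $(x_1,\dots,x_s)\in(V')^{s}$ with $x_ix_j\in E$ whenever $v_iv_j\in E(H)$, and the count of \emph{non-injective} edge-preserving maps is of smaller order than $m^{s}(d/n)^{r}$ under the hypothesis on $m$, hence negligible. So it suffices to estimate the number $N$ of all edge-preserving maps and show $N=(1+o(1))\,m^{s}(d/n)^{r}$.

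For this I would order the vertices $v_1,\dots,v_s$ and build the embedding greedily, letting $H_i$ be the subgraph induced on $\{v_1,\dots,v_i\}$ with $r_i$ edges, and letting $t_i\le\Delta$ be the number of neighbours of $v_i$ among $v_1,\dots,v_{i-1}$, so that $r_i=r_{i-1}+t_i$. I would prove by induction that the number $N_i$ of edge-preserving maps of $H_i$ into $V'$ is $(1+o(1))\,m^{i}(d/n)^{r_i}$, the case $i=s$ being the claim. The inductive step amounts to estimating $\sum_{\phi}\mathrm{cn}_{V'}\big(\phi(u_1),\dots,\phi(u_{t_i})\big)$, the sum over all embeddings $\phi$ of $H_{i-1}$ of the number of common neighbours inside $V'$ of the images of the $t_i$ back-neighbours of $v_i$. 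Each summand has expected size $m(d/n)^{t_i}$; for $t_i=1$ the mixing lemma gives it directly (here $\sum_{\phi}$ is exactly an edge count $e(V',V')$), and for $t_i\ge2$ one peels off one back-neighbour at a time, at each stage splitting the partial embeddings according to the images of the already-used back-neighbours and applying the mixing lemma and the variance bound to each class.

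The main obstacle is that the number of common neighbours of a fixed small vertex set concentrates only on average and not for every tuple, so the deviations produced by the mixing lemma must be summed over all partial embeddings $\phi$ without compounding. I would control this by the second-moment estimate above: after isolating the main term $m(d/n)^{t_i}$, the aggregate deviation at stage $i$ is bounded, via Cauchy--Schwarz against the variance bound, by a quantity whose relative size is governed by $\lambda(n/d)^{\Delta}/m$. Since $t_i\le\Delta$ and $m\gg\lambda(n/d)^{\Delta}$, each per-stage relative error is $o(1)$, and as $s$ is fixed the finitely many stages contribute only an $o(1)$ total. Multiplying the telescoping per-stage estimates gives $N=(1+o(1))\,m^{s}(d/n)^{r}$, and dividing by $|\mathrm{Aut}(H)|$ completes the proof.
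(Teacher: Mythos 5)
First, a point of orientation: the paper never proves this statement at all --- it is quoted verbatim as \cite[Theorem 4.10]{ks} (a theorem of Alon), and the paper only remarks that its colored variant, Theorem \ref{a-c-sesb}, ``has a similar proof.'' So your proposal can only be compared against the cited argument and against the machinery the paper builds for its own refinements. By that measure your route is the intended one: the mixing lemma (Corollary \ref{edge-c-sesb}) and the variance bound (Theorem \ref{edge-t-sesb}) as the sole spectral input, greedy vertex-by-vertex insertion, and Cauchy--Schwarz to keep the aggregated deviations from compounding --- the same scheme the paper runs (with H\"older's inequality in place of Cauchy--Schwarz) in its Lemmas \ref{star-sesb} and \ref{bipartite-lemma-sesb}.

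That said, two steps that you assert would not survive being written out as stated, and they are exactly where the content lies. (a) The claim that non-injective edge-preserving maps are negligible is not free. Such a map factors through a quotient $H/\{u,v\}$ with $u \not\sim v$ identified, and you cannot feed that quotient back into the theorem: the merged vertex can have degree up to $2\Delta$, and the resulting requirement $m \gg (n/d)^{2\Delta}$ is \emph{not} implied by $m \gg \lambda(n/d)^{\Delta}$ on the whole meaningful range (take $\Delta = 2$, $\lambda \asymp \sqrt{d}$ and $n^{2/3} \ll d \ll n^{3/4}$: the hypothesis is satisfiable with $m < n$, yet $(n/d)^{4} > n > m$). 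A correct accounting: merging loses $\tmop{codeg}_H(u,v)$ edges, and pruning the merged vertex back to degree $\leq \Delta$ loses at most $\Delta - \tmop{codeg}_H(u,v)$ more, so the quotient has a spanning subgraph with maximum degree $\leq \Delta$ and at least $r - \Delta$ edges; since $\lambda \gtrsim 1$ for any $d$-regular graph with $d \leq 0.9n$, this gives the bound $O(m^{s-1}(d/n)^{r-\Delta}) = o(m^{s}(d/n)^{r})$. (Alternatively, and more simply, enforce injectivity inside the greedy step by discarding the at most $s$ already-used vertices from each candidate set, a relative loss $O(s(n/d)^{\Delta}/m) = o(1)$; this is how the cited proof proceeds.) (b) Your per-stage error claim ``governed by $\lambda(n/d)^{\Delta}/m$, since $t_i \leq \Delta$'' is not justified by $t_i \leq \Delta$ alone: peeling the edge from $v_i$ to a back-neighbour $u$ produces, after Cauchy--Schwarz, a relative error $\frac{\lambda}{m}(n/d)^{(\deg u + \deg v_i)/2}$ with degrees taken in the current partial graph, and to land on exponent $\Delta$ rather than $\Delta + \tfrac12$ you must also use that a back-neighbour satisfies $\deg_{H_{i-1}}(u) \leq \Delta - 1$ (its edge to $v_i$ is not yet present). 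Relatedly, the grouped sums appearing in the peeling are homomorphism counts of auxiliary graphs that are not among the $H_i$, so the induction must be set up over \emph{all} graphs with at most $s$ vertices and maximum degree at most $\Delta$, ordered by number of edges, not just over the chain $H_1 \subset \cdots \subset H_s$. All of this is repairable, and with these repairs your argument is the standard proof of the theorem.
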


If we are only interested in the existence of one copy of $H$ then one can
sometimes improve the conditions on $d$ and $\lambda$ in Theorem \ref{a-sesb}. The first
result of this paper is an improvement of the conditions on $d$ and $\lambda$
in Theorem \ref{a-sesb} for complete bipartite graphs. Let $G \times G$ be the bipartite
graph with two identical vertex parts $V (G)$ and $V (G)$. Two vertices $u$
and $v$ in two different parts are connected by an edge if and only if they
are connected by an edge in $G$. For any two subsets $U_1, U_2 \subset V (G)$,
let $G [U_1, U_2]$ be the induced bipartite subgraph of $G \times G$ on $U_1
\times U_2$.

\begin{theorem}\label{bipartite-sesb}
  For any $t \geq s$ and $t \geq 2$, let $G = (V, E)$ be an $(n, d,
  \lambda)$-graph. For every subsets $U_1, U_2 \subset V$ with
  \[ |U_1 | |U_2 | \geq \lambda^2 (n / d)^{t + s}, \]
  the induced subgraph $G [U_1, U_2]$ contains
  \[ (1 + o (1)) \frac{|U_1 |^s |U_2 |^t}{s!t!} \left( \frac{d}{n} \right)^{s
     t} \]
  copies of $K_{s, t}$.
\end{theorem}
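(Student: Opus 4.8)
The plan is to count \emph{labelled} copies and divide out the symmetries. Fixing the convention that the part of size $s$ sits in $U_1$ and the part of size $t$ in $U_2$ (as the factor $|U_1|^s|U_2|^t$ suggests), let $T$ denote the number of tuples $(x_1,\dots,x_s,y_1,\dots,y_t)$ with the $x_i$ distinct in $U_1$, the $y_j$ distinct in $U_2$, and $x_i\sim y_j$ for all $i,j$; then the number of copies of $K_{s,t}$ is $T/(s!\,t!)$. I would estimate $T$ by first dropping the distinctness requirement, i.e.\ by comparing $T$ with
\[ M=\sum_{\mathbf x\in U_1^{s}}\ \sum_{\mathbf y\in U_2^{t}}\ \prod_{i=1}^{s}\prod_{j=1}^{t}A(x_i,y_j), \]
and showing that $M$ equals the desired main term while the corrections are lower order.

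The mechanism that produces the threshold, and the point where this refines Theorem~\ref{a-sesb}, is the separate treatment of the two vertex sets when passing from $M$ back to $T$. Identifying two indices $x_i=x_{i'}$ collapses $M$ to the analogous count for $K_{s-1,t}$, whose size relative to the main term is $(n/d)^{t}/|U_1|$; identifying $y_j=y_{j'}$ likewise costs a factor $(n/d)^{s}/|U_2|$. Hence these coincidences are negligible as soon as $|U_1|\gg(n/d)^{t}$ and $|U_2|\gg(n/d)^{s}$. Because $U_1,U_2\subseteq V$ and every $(n,d,\lambda)$-graph satisfies $\lambda^2\ge d(n-d)/(n-1)\gtrsim d$, the hypothesis gives $|U_1|\ge\lambda^2(n/d)^{s+t}/|U_2|\gtrsim(n/d)^{s+t-1}\ge(n/d)^{t}$ and symmetrically $|U_2|\gtrsim(n/d)^{s}$, so both bounds hold. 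It is precisely the use of two sets whose \emph{product} of sizes is constrained — rather than one set as in Theorem~\ref{a-sesb} — that lowers the exponent from $t$ to $(s+t)/2$.

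For the main term I would write $A=\tfrac dn J+E$, where $J$ is the all-ones matrix and $E=A-\tfrac dn J$ satisfies $E\mathbf 1=0$ and $\|E\|\le\lambda$, and expand the product in $M$ over its $st$ factors. The term in which every factor equals $d/n$ contributes $(d/n)^{st}|U_1|^{s}|U_2|^{t}$, which after division by $s!\,t!$ is exactly the asserted count; every remaining term is indexed by a nonempty bipartite pattern $P\subseteq[s]\times[t]$, carries a power $(d/n)^{st-|P|}$, and leaves the variables untouched by $P$ to be summed freely. The residual sum over the variables in $P$ is controlled by the spectral gap: for a single edge it is $\mathbf 1_{U_1}^{\!\top}E\mathbf 1_{U_2}$, bounded by $\lambda\sqrt{|U_1||U_2|}$ — this is the expander mixing lemma — and for a general $P$ one iterates Cauchy–Schwarz together with $\|E\|\le\lambda$ and the energy bound $\sum_{x\in U_1,\,y\in U_2}E(x,y)^2=(1+o(1))\tfrac dn|U_1||U_2|$.

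I expect the main obstacle to be exactly this last step: bounding every one of the error patterns so that each is $o$ of the main term under the single hypothesis $|U_1||U_2|\ge\lambda^2(n/d)^{s+t}$. The low patterns (single edge, path, even cycle, matching) are handled cleanly by the inequalities above and demand only $|U_1||U_2|\gg\lambda^2(n/d)^{a}$ with $a\le s+t$; the genuinely delicate cases are the higher, ``odd'' patterns such as stars $\sum_{x\in U_1}\big(\sum_{y\in U_2}E(x,y)\big)^{k}$, for which the crude pointwise spectral bound is lossy and one must argue more carefully — following the method underlying Theorem~\ref{a-sesb} — to avoid paying a larger power of $n/d$. Once all patterns are shown to be dominated by the coincidence threshold of the second step, combining the three estimates yields $T=(1+o(1))(d/n)^{st}|U_1|^s|U_2|^t$ and hence the stated count.
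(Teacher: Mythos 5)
Your reduction to labelled tuples and your treatment of coincidences are sound (with the side remark that your inequality $\lambda^2\gtrsim d$ requires $d$ bounded away from $n$, say $d\le 0.9n$), and this part matches in spirit what the paper does when it passes from ordered, possibly degenerate, copies to genuine copies of $K_{s,t}$. The genuine gap is exactly the step you yourself flag as ``the main obstacle'': in the expansion $A=\frac{d}{n}J+E$ you must show, for \emph{every} connected bipartite pattern $C$ with $a$ left-vertices, $b$ right-vertices and $e$ edges, that the sum $S_C=\sum_{\mathbf{x}\in U_1^a,\,\mathbf{y}\in U_2^b}\prod_{(i,j)\in C}E(x_i,y_j)$ satisfies $(n/d)^e|S_C|/(|U_1|^a|U_2|^b)=o(1)$ under the single hypothesis $|U_1||U_2|\ge\lambda^2(n/d)^{s+t}$. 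You verify this only for the easy patterns (single edges, paths, four-cycles, matchings) and leave the hard ones unresolved: the odd stars $\sum_{x\in U_1}\bigl(\sum_{y\in U_2}E(x,y)\bigr)^{k}$, $k\ge 3$, and, worst of all, the full pattern $\sum_{\mathbf{y}\in U_2^t}\bigl(\sum_{x\in U_1}\prod_{j}E(x,y_j)\bigr)^{s}$, which is essentially the quantity the theorem is about. Moreover, your proposed fallback --- ``following the method underlying Theorem~\ref{a-sesb}'' --- cannot close this gap: that method produces the threshold $m\gg\lambda(n/d)^{\Delta}$ with $\Delta=t$, i.e.\ effectively $|U_1||U_2|\gg\lambda^2(n/d)^{2t}$, which for $t>s$ is strictly weaker than the bound to be proved. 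Deferring the problematic patterns to that argument forfeits precisely the improvement that is the content of Theorem~\ref{bipartite-sesb}.

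For comparison, the paper avoids the pattern-by-pattern expansion altogether. It runs a nested induction in which the only spectral input is the mixing lemma (Corollary~\ref{edge-c-sesb}) applied to pairs of sets, one of which is a common neighbourhood: Lemma~\ref{star-sesb} estimates $\sum_{y_1,\ldots,y_t\in U_2}\tmmathbf{S}_{y_1,\ldots,y_t}(U_1)$ by induction on $t$, rewriting it as $\sum_{y_1,\ldots,y_{t-1}}e\bigl(U_2,\mathcal{S}_{y_1,\ldots,y_{t-1}}(U_1)\bigr)$; Lemma~\ref{bipartite-lemma-sesb} then estimates $\sum_{y_1,\ldots,y_t\in U_2}(\tmmathbf{S}_{y_1,\ldots,y_t}(U_1))^{s}$ by induction on $s$, swapping the roles of $U_1$ and $U_2$. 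Errors are carried with the two-parameter bookkeeping $\tilde{o}(g,h)$ and absorbed at each step by Young's and H\"older's inequalities; the final error compares to the main term as $\bigl(\lambda^2(n/d)^{s+t}/(|U_1||U_2|)\bigr)^{t}$, which is how the exponent $s+t$ (rather than $2t$) arises, and no odd moment of $E$ ever appears. To salvage your route you would need an iterated Cauchy--Schwarz argument achieving this exponent for every connected pattern, which is substantially harder than the cases you checked; as written, your proof is incomplete at its central step.
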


Note that the bound in Theorem \ref{bipartite-sesb} is stronger than that in Theorem \ref{a-sesb} when $t >
s$. For small bipartite subgraphs, $K_{2, t}$, we can further improve the bound in Theorem \ref{bipartite-sesb}.

\begin{theorem}\label{book-sesb}
  For any $t \geq 1$, let $G = (V, E)$ be an $(n, d, \lambda)$-graph. For
  every subsets $U_1, U_2 \subset V$ with
  \[ |U_1 | |U_2 | \geq \lambda^2 (n / d)^{t + 1} \]
  the induced subgraph $G [U_1, U_2]$ contains
  \[ (1 + o (1)) \frac{|U_1 |^s |U_2 |^t}{2! t!} \left( \frac{d}{n} \right)^{s
     t} \]
  copies of $K_{2, t}$.
\end{theorem}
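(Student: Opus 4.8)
The plan is to reduce the enumeration of $K_{2,t}$'s to the distribution of the $U_2$-co-degrees of pairs in $U_1$, and then to control the moments of that distribution through the spectral gap of $G$. I would begin with the exact combinatorial identity: a copy of $K_{2,t}$ in $G[U_1,U_2]$ is the same datum as an unordered pair $\{u_1,u_2\}\subseteq U_1$ together with a $t$-element subset of its common neighbours in $U_2$. Writing $c(u_1,u_2)\assign|N(u_1)\cap N(u_2)\cap U_2|$, the number of copies is therefore exactly
\[ Z=\sum_{\{u_1,u_2\}\subseteq U_1}\binom{c(u_1,u_2)}{t}. \]
Since $\binom{x}{t}$ is a degree-$t$ polynomial with leading term $x^t/t!$, everything hinges on the first moments of the co-degree, and ultimately on the $t$-th moment $\sum_{u_1\neq u_2}c(u_1,u_2)^{t}$.

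I would isolate the main term spectrally. As $G$ is $d$-regular, $A=\frac{d}{n}J+E$ with $J$ the all-ones matrix, $E\mathbbm{1}=0$, and $\|E\|\le\lambda$. With $D$ the diagonal indicator of $U_2$ one has $c(u_1,u_2)=(ADA)_{u_1u_2}$, and expanding gives
\[ c(u_1,u_2)=|U_2|\Big(\tfrac{d}{n}\Big)^2+R(u_1,u_2),\qquad R=\tfrac{d}{n}\big(JDE+EDJ\big)+EDE. \]
Thus every pair shares the common expected co-degree $\bar c\assign|U_2|(d/n)^2$, which the hypothesis forces to infinity, so $\binom{\bar c}{t}=(1+o(1))\bar c^{\,t}/t!$. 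The lower bound on $Z$ is then the easy direction: $\binom{x}{t}$ is convex, and since the expander mixing lemma yields $e(U_1,U_2)=(1+o(1))\frac{d}{n}|U_1||U_2|$, Cauchy--Schwarz gives $\sum_{w\in U_2}d_{U_1}(w)^2\ge e(U_1,U_2)^2/|U_2|$, so the average co-degree of a pair is at least $(1+o(1))\bar c$; Jensen's inequality now gives $Z\ge(1+o(1))\binom{|U_1|}{2}\bar c^{\,t}/t!$, already the claimed main term.

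The matching upper bound is the crux, and concentration is genuinely required: because $\binom{c}{t}$ grows like $c^{t}$, a few pairs with anomalously large co-degree could inflate $Z$, and neither a Chebyshev bound from the second moment (with the trivial $c\le d$ on the exceptional pairs) nor a crude H\"older reduction $\sum R^{k}\le(\max|R|)^{k-2}\sum R^{2}$ is sharp enough to reach the exponent $t+1$. I would instead estimate $\sum_{u_1,u_2\in U_1}c(u_1,u_2)^{t}$ by induction on the power. Using $c^{j}=c^{j-1}\sum_{w\in U_2}A_{u_1w}A_{u_2w}$,
\[ \sum_{u_1,u_2\in U_1}c(u_1,u_2)^{j}=\sum_{w\in U_2}v_w^{\mathsf T}C^{(j-1)}v_w,\qquad v_w=(A_{uw})_{u\in U_1},\ C^{(j-1)}=\big(c(u_1,u_2)^{j-1}\big)_{u_1,u_2\in U_1}. \]
Writing $C^{(j-1)}=\bar c^{\,j-1}J+(\text{fluctuation})$, the rank-one part contributes $\bar c^{\,j-1}\sum_{w\in U_2}d_{U_1}(w)^2=(1+o(1))\bar c^{\,j}|U_1|^2$, reproducing the main term, while the fluctuation, paired against the $v_w$, is controlled by $\|E\|\le\lambda$ together with the inductive hypothesis on the $(j-1)$-st moment. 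Each step multiplies the main term by $\bar c$ and adds one spectral error; after $t$ steps the total error is $o(|U_1|^2\bar c^{\,t})$ exactly when $|U_1||U_2|\ge\lambda^2(n/d)^{t+1}$. Combined with the lower bound and $\binom{|U_1|}{2}=(1+o(1))|U_1|^2/2$ this gives $Z=(1+o(1))\frac{|U_1|^2|U_2|^{t}}{2!\,t!}(d/n)^{2t}$.

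The principal obstacle is precisely this sharp evaluation of the $t$-th moment of the co-degree. For $t=2$ it is a clean Frobenius computation, the $C_4$ count, since the symmetric identity $\sum_{u_1,u_2\in U_1}c(u_1,u_2)^2=\sum_{w,w'\in U_2}(\text{co-degree of }w,w'\text{ in }U_1)^2$ makes it a genuine spectral quantity; but for $t\ge 3$ there is no trace formula for $\sum_{u_1,u_2}c(u_1,u_2)^{t}$, so the inductive estimate above, with careful bookkeeping of the degenerate terms in which several $w_j$ coincide, is where the real work lies. This is also the structural reason $K_{2,t}$ improves on the exponent $t+2$ that Theorem~\ref{bipartite-sesb} supplies for $s=2$: the $2$-side is absorbed into the exact quadratic identity, leaving only the $t$-fold neighbourhood over $U_2$ to be controlled.
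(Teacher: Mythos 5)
Your plan identifies the same key quantity as the paper: your $t$-th co-degree moment $\sum_{u_1,u_2\in U_1}c(u_1,u_2)^{t}$ is exactly the paper's $\sum_{y_1,\ldots,y_t\in U_2}\tmmathbf{K}^2_{y_1,\ldots,y_t}(U_1)$ of Lemma \ref{book-lemma-sesb}, and the paper also evaluates it by stripping off one factor of $c$ at a time. But your inductive step, as written, is circular, and this is a genuine gap rather than a presentational one. Pairing $C^{(j-1)}=\bar c^{\,j-1}J+F$ against the $v_w$ gives for the rank-one part
\[
\bar c^{\,j-1}\sum_{w\in U_2}d_{U_1}(w)^2=\bar c^{\,j-1}\sum_{u_1,u_2\in U_1}c(u_1,u_2),
\]
while the fluctuation part is identically
\[
\sum_{w\in U_2}v_w^{\mathsf T}Fv_w=\sum_{u_1,u_2\in U_1}\bigl(c^{j-1}-\bar c^{\,j-1}\bigr)c
=\sum_{u_1,u_2\in U_1}c^{j}-\bar c^{\,j-1}\sum_{u_1,u_2\in U_1}c,
\]
so the two pieces recombine to the tautology $\sum c^{j}=\sum c^{j}$: no spectral information has entered, and the ``fluctuation'' is precisely the quantity you are trying to estimate. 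Nor can it be controlled, as you assert, by $\|E\|\le\lambda$ plus the inductive hypothesis on the $(j-1)$-st moment: since $c\mapsto c^{j-1}$ is increasing, $\sum(c^{j-1}-\bar c^{\,j-1})(c-\bar c)$ is a sum of nonnegative terms, and the natural Cauchy--Schwarz bound requires $\sum(c^{j-1}-\bar c^{\,j-1})^2$, i.e.\ the $(2j-2)$-nd moment, which for $j\ge3$ is \emph{higher} than the moment being computed; a bound through $\|F\|_{\mathrm{op}}$ or through row sums of $C^{(j-1)}$ likewise needs matrix-level control that the scalar estimate on $\sum c^{j-1}$ does not supply. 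This is exactly the regime $t\ge 3$ that you yourself flag as ``where the real work lies'' --- but that work is the theorem.

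What makes the induction close --- and what the paper actually does --- is to group the $j$-th moment by $(j-1)$-tuples of $U_2$ rather than by pairs of $U_1$: writing $\mathcal{S}_{\bar y}(U_1)$ for the common neighbourhood in $U_1$ of $\bar y\in U_2^{j-1}$, one has, as in (\ref{e11-sesb}),
\[
\sum_{u_1,u_2\in U_1}c(u_1,u_2)^{j}=\sum_{\bar y\in U_2^{j-1}}p_2\bigl(U_2,\mathcal{S}_{\bar y}(U_1)\bigr),
\]
where $p_2(B,C)$ counts paths of length two with midpoint in $B$ and endpoints in $C$. The spectral gap then enters through Lemma \ref{path-sesb} applied to each fixed set $C=\mathcal{S}_{\bar y}(U_1)$, whose error $2\tfrac{\lambda d}{n}|U_2|^{1/2}|\mathcal{S}_{\bar y}|^{3/2}+\lambda^{2}|\mathcal{S}_{\bar y}|$ involves powers of $|\mathcal{S}_{\bar y}|$ strictly below the main term's $|\mathcal{S}_{\bar y}|^{2}$; each error term is a geometric mean of the main term and $\lambda^4(n/d)^2|U_2|^{-1}$, so summing over $\bar y$ and invoking the induction hypothesis (the paper's $\tilde o$ bookkeeping together with the Cauchy--Schwarz/Young step (\ref{e10-sesb})) bounds the total error by the maximum of $o(\text{main term})$ and $\lambda^4(n/d)^2|U_2|^{t-2}$, and the latter is dominated by the main term precisely when $|U_1||U_2|\ge\lambda^2(n/d)^{t+1}$. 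Separately, note that the paper works throughout with ordered, possibly degenerate configurations, so the passage from your $\sum\binom{c}{t}$ to the $t$-th moment --- including your claim that $\bar c\to\infty$ is forced by the hypothesis, and the removal of tuples with repeated vertices --- is further bookkeeping your sketch defers rather than carries out.
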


In fact, our results could be stated in multi-color versions, which will be
more convenient in later applications. Suppose that a graph $G$ is
edge-colored by a set of finite colors. We call $G$ an $(n, d, \lambda$)-colored
graph if the subgraph of $G$ on each color is an $(n, d (1 + o (1)),
\lambda)$-graph. The following results are multi-color analogues of Theorems \ref{a-sesb}, \ref{bipartite-sesb} and \ref{book-sesb} for $(n, d, \lambda)$-colored graphs.

\begin{theorem} \label{a-c-sesb}
  Let $H$ be a fixed edge-colored graph with $r$ edges, $s$ vertices, and
  maximum degree $\Delta$, and let $G = (V, E)$ be an $(n, d, \lambda)$-colored
  graph, where $d \leq 0.9 n$. Let $m < n$ satisfy $m \gg \lambda (n /
  d)^{\Delta}$. For every subset $V' \subset V$ of cardinality $m$, the number
  of (not necessarily induced) copies of $H$ in $V'$ is
  \[ (1 + o (1)) \frac{m^s}{| \tmop{Aut} (H) |} \left( \frac{d}{n} \right)^r .
  \]
\end{theorem}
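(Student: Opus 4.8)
The plan is to reproduce the proof of Theorem \ref{a-sesb} (the single-color case) essentially verbatim, inserting a bookkeeping of colors at the one place where the graph structure is actually used, namely the estimation of common neighborhoods. First I would fix an ordering $v_1, \ldots, v_s$ of the vertices of $H$ and, for each $i$, let $D_i = \{ j < i : v_j v_i \in E (H) \}$ denote the back-neighbors of $v_i$, so that $|D_i | \leq \Delta$ and $\sum_{i = 1}^s |D_i | = r$. I would then count labeled copies of $H$ in $V'$ by embedding the vertices in this order: having placed the images of $v_1, \ldots, v_{i - 1}$, the image of $v_i$ must be a vertex of $V'$ that is joined to the image of each $v_j$ ($j \in D_i$) by an edge of the color prescribed by $H$ on the pair $v_j v_i$.

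The heart of the matter is the estimate that, for a set $W$ of at most $\Delta$ already-placed vertices together with a color demand on each of them, the number of vertices of $V'$ that are simultaneously adjacent to every $w \in W$ in the correct color is $(1 + o (1)) |V'| (d / n)^{|W|}$ for all but a negligible fraction of choices. In the single-color case this is exactly the content underlying Theorem \ref{a-sesb}, obtained by writing the adjacency matrix as $A = (d / n) J + E$ with $\| E \| \leq \lambda$ and iterating the expander mixing lemma; the threshold $m \gg \lambda (n / d)^{\Delta}$ is precisely what keeps the accumulated error negligible through all $\Delta$ iterations. In the colored setting each color class is by hypothesis an $(n, d (1 + o (1)), \lambda)$-graph, so its adjacency matrix $A_c$ admits the same decomposition $A_c = (d_c / n) J + E_c$ with $d_c = d (1 + o (1))$ and $\| E_c \| \leq \lambda$. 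The common-neighbor condition now intersects neighborhoods taken in possibly different color classes, but expanding the product $\prod_{w \in W} (A_{c (w)})_{x, w}$ produces one main term $(d / n)^{|W|}$ coming from the rank-one parts and error terms each carrying at least one factor $E_c$, which are controlled by exactly the same spectral bound. Since $H$ is fixed and the colors are finite in number, the finitely many $(1 + o (1))$ factors coming from the various $d_c$ merge into a single $(1 + o (1))$.

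With this estimate in hand I would multiply the per-vertex counts: the $i$-th vertex contributes $(1 + o (1)) m (d / n)^{|D_i |}$ choices, giving $(1 + o (1)) \prod_{i = 1}^s m (d / n)^{|D_i |} = (1 + o (1)) m^s (d / n)^r$ labeled copies, and then divide by $| \tmop{Aut} (H) |$ to pass from labeled to unlabeled copies, which yields the stated count. Some care is needed to ensure the $o (1)$ error genuinely telescopes rather than compounds across the $s$ embedding steps; this is handled, as in the single-color argument, by discarding at each step the negligible set of partial embeddings for which the common-neighbor count is abnormal and checking that their total contribution is dominated by the main term.

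I expect the main obstacle to be the error analysis in the common-neighborhood estimate when the incident edges belong to different color classes: one must verify that the iterated expander mixing argument, which in the monochromatic case uses a single matrix $E$ with $\| E \| \leq \lambda$, still closes when successive steps use different matrices $E_c$. This is not conceptually new, since each $E_c$ obeys the same norm bound, but the cross terms must be organized so that every one of them is bounded by $\lambda$ times an appropriate product of the set sizes, and so that their number stays bounded by a constant depending only on $H$.
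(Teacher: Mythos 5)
Your proposal takes exactly the route the paper itself prescribes: the paper gives no standalone proof of Theorem \ref{a-c-sesb}, stating only that its proof is that of \cite[Theorem 4.10]{ks} (i.e., of Theorem \ref{a-sesb}) with colors inserted ``in a couple of places,'' which is precisely your plan of re-running the embedding argument with the per-color decomposition $A_c = (d_c/n)J + E_c$, $\|E_c\| \leq \lambda$, for each color class. Your treatment of the two genuine points of care --- discarding the negligible set of abnormal partial embeddings so that errors aggregate rather than compound, and merging the finitely many $(1+o(1))$ factors coming from the different color degrees $d_c = d(1+o(1))$ --- is consistent with that argument, so the proposal is correct and essentially identical in approach.
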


\begin{theorem}\label{bipartite-c-sesb}
  For any $t \geq 2$, let $H$ be a fixed edge-colored complete bipartite graph
  $K_{s, t}$ with $s \leq t$. Let $G = (V, E)$ be an $(n, d, \lambda)$-colored
  graph. For every subset $U_1, U_2 \subset V$ with
  \[ |U_1 | |U_2 | \geq \lambda^2 (n / d)^{t + s}, \]
  the induced subgraph $G [U_1, U_2]$ contains
  \[ (1 + o (1)) \frac{|U_1 |^s |U_2 |^t}{\tmop{Aut} (H)} \left( \frac{d}{n}
     \right)^{s t} \]
  copies of $H$.
\end{theorem}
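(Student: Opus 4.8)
The plan is to run the proof of Theorem \ref{bipartite-sesb} essentially verbatim, replacing the single adjacency matrix by the family of colour-class adjacency matrices, and to check that every estimate used there rests only on the uniform spectral bound $\lambda$ and the common density $d / n$, both of which the colored hypothesis supplies for each colour. Concretely, write $\chi (i, j)$ for the prescribed colour of the edge joining the $i$-th left vertex to the $j$-th right vertex of $H = K_{s, t}$, and let $A^{(c)}$ be the adjacency matrix of the colour-$c$ subgraph of $G$. By assumption each $A^{(c)}$ is the adjacency matrix of an $(n, d (1 + o (1)), \lambda)$-graph, so it decomposes as $A^{(c)} = \frac{d_c}{n} J + E^{(c)}$, where $J$ is the all-ones matrix, $d_c = d (1 + o (1))$, $E^{(c)} \tmmathbf{1} = 0$, and $\| E^{(c)} \| \leq \lambda$ on $\tmmathbf{1}^{\perp}$.

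First I would express the number of labelled colour-respecting copies of $H$ with left vertices in $U_1$ and right vertices in $U_2$ as
\[ N = \sum_{\substack{\tmmathbf{x} \in U_1^s,\ \tmmathbf{y} \in U_2^t}}^{\ast} \ \prod_{i = 1}^s \prod_{j = 1}^t A^{(\chi (i, j))}_{x_i y_j}, \]
the starred sum running over tuples with distinct coordinates; tuples with a coincidence span a lower-dimensional set and contribute a negligible amount, so they may be discarded at the end. Substituting the decomposition of each $A^{(c)}$ and expanding, the term in which every factor is taken to be $\frac{d_c}{n}$ produces the main term $(1 + o (1)) (d / n)^{s t} |U_1 |^s |U_2 |^t$, since all colour classes share the density $d (1 + o (1)) / n$; dividing by the number $| \tmop{Aut} (H) |$ of colour-preserving automorphisms yields the claimed count. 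The colours thus enter the main term only through this common density and leave it unchanged.

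It then remains to show that every term containing at least one $E$-factor is $o (1)$ times the main term. I would index these error terms by the nonempty set $S \subseteq E (H)$ of edges carrying an $E$-factor; after factoring out the powers of $|U_1 |$ and $|U_2 |$ contributed by the left and right vertices not incident to $S$, each reduces to a sum of the shape $\sum \prod_{(i, j) \in S} E^{(\chi (i, j))}_{x_i y_j}$ over the active vertices. Such contractions are bounded using only $\| E^{(c)} \| \leq \lambda$, by repeatedly splitting $\tmmathbf{1}_{U_1}$ and $\tmmathbf{1}_{U_2}$ into their components along and orthogonal to $\tmmathbf{1}$ (the component along $\tmmathbf{1}$ being annihilated by $E^{(c)}$) and applying the operator-norm bound to the remaining factors.

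The main obstacle is the bookkeeping in this last step: one must run over all edge-subsets $S$, bound the corresponding contraction in terms of powers of $\lambda$, $|U_1 |$ and $|U_2 |$, and isolate the single configuration whose ratio to the main term decays most slowly, which is precisely the one calibrated by the hypothesis $|U_1 | |U_2 | \geq \lambda^2 (n / d)^{t + s}$. The point of the colored reduction, however, is that each individual bound uses nothing about a colour beyond the spectral estimate $\| E^{(c)} \| \leq \lambda$, which is identical for every colour. Consequently the colouring only refines which matrix $E^{(c)}$ sits on a given edge; it changes no norm and no power of $\lambda$ or $d / n$, so I expect the entire error analysis to be word-for-word that of Theorem \ref{bipartite-sesb}. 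This is what lets the multicolour statement follow with no extra work once the monochromatic case is in hand.
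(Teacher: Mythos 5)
Your opening reduction is exactly the paper's own proof of Theorem \ref{bipartite-c-sesb}: each colour class of an $(n,d,\lambda)$-colored graph is an $(n,d(1+o(1)),\lambda)$-graph, so one reruns the monochromatic argument with the colour pattern of $H$ dictating which colour class is invoked at each step; the paper disposes of all three multicolour statements with precisely this remark (``inserting different letters in a couple of places''). The gap lies in what you then take the monochromatic argument to be. The paper does \emph{not} prove Theorem \ref{bipartite-sesb} by writing the count as $\sum\prod_{i,j}A_{x_iy_j}$, decomposing $A=\frac{d}{n}J+E$ and expanding over the subsets $S$ of edges carrying $E$-factors; it proves it by induction on common-neighbour counts (Lemma \ref{star-sesb} for stars, then Lemma \ref{bipartite-lemma-sesb} for $K_{s,t}$ via H\"older), each step being one application of Corollary \ref{edge-c-sesb} with the error propagated through the two-sided bookkeeping $\tilde{o}(g,h)$. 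Consequently your closing claim that ``the entire error analysis [is] word-for-word that of Theorem \ref{bipartite-sesb}'' is empty: the paper contains no expansion-type error analysis that you could import, and you never carry one out.

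Moreover, the step you defer is not mere bookkeeping; as described, it fails. You assert that every contraction $\sum\prod_{(i,j)\in S}E^{(\chi(i,j))}_{x_iy_j}$ can be bounded ``using only $\|E^{(c)}\|\le\lambda$''. Test this on $s=1$, $t=3$, $S=E(H)$, where the contraction is $\sum_{x\in U_1}\bigl((E\mathbf{1}_{U_2})(x)\bigr)^3$: the only estimate the spectral hypothesis yields is $\|E\mathbf{1}_{U_2}\|_2^3\le\lambda^3|U_2|^{3/2}$, and this cannot be improved within your framework, since symmetric matrices with $E\tmmathbf{1}=0$, $\|E\|\le\lambda$ attain it (take $E=\lambda(vw^T+wv^T)$ with $v,w$ the normalized projections onto $\tmmathbf{1}^\perp$ of $\mathbf{1}_{U_2}$ and of a point mass at some $x_0\notin U_2$; then $E\mathbf{1}_{U_2}\approx\lambda\sqrt{|U_2|}\,\delta_{x_0}$). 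Now take the paper's own norm graph (Lemma \ref{norm-graph-lemma} with extension degree $m$), a $(q^m,\approx q^{m-1},q^{m/2})$-graph, with $U_1=V$ and $|U_2|=q^5$, so the hypothesis $|U_1||U_2|\ge\lambda^2(n/d)^{4}$ holds with room to spare: the main term is $\approx q^{m+12}$, while your bound on this single error term is $\approx q^{3m/2+7.5}$, larger by the factor $q^{m/2-4.5}\to\infty$ once $m\ge 10$. The term is in fact negligible there, but proving that requires the entrywise bound $|(E\mathbf{1}_{U_2})(x)|\le(1+o(1))|U_2|$, i.e.\ information beyond the operator norm --- exactly the kind of ``trivial-bound'' input that the second argument of the paper's $\tilde{o}(g,h)$ induction carries along. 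So either you must redo the expansion with genuinely stronger mixed $\ell^\infty$/$\ell^2$ contraction estimates, or, far more simply, rerun the paper's actual induction (Lemmas \ref{star-sesb} and \ref{bipartite-lemma-sesb}) with colour superscripts inserted, which is the intended proof.
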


\begin{theorem}\label{book-c-sesb}
  For any $t \geq 1$, let $H$ be a fixed edge-colored complete bipartite graph
  $K_{2, t}$. Let $G = (V, E)$ be an $(n, d, \lambda)$-colored graph. For
  every subset $U_1, U_2 \subset V$ with
  \[ |U_1 | |U_2 | \geq \lambda^2 (n / d)^{t + 1}, \]
  the induced subgraph $G [U_1, U_2]$ contains
  \[ (1 + o (1)) \frac{|U_1 |^2 |U_2 |^t}{\tmop{Aut} (H)} \left( \frac{d}{n}
     \right)^{2 t} \]
  copies of $H$.
\end{theorem}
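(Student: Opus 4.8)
The plan is to mimic the proof of the uncolored Theorem \ref{book-sesb}, carrying the edge colors through every estimate; the point is that each color class is by hypothesis an $(n, d (1 + o (1)), \lambda)$-graph, so every spectral bound available for a single expander is available simultaneously for all colors. Fix a labelling of $H$ so that the two-part is $\{a_1, a_2\}$ and the $t$-part is $\{b_1, \ldots, b_t\}$, and let $c_{i j}$ be the prescribed color of the edge $a_i b_j$. Placing $a_1, a_2$ in $U_1$ and $b_1, \ldots, b_t$ in $U_2$, the number of ordered tuples realizing the colored pattern of $H$ is
\[ \sum_{(a_1, a_2) \in U_1 \times U_1} \prod_{j = 1}^t N_j (a_1, a_2), \qquad N_j (a_1, a_2) = \# \{ w \in U_2 : a_1 w \text{ has color } c_{1 j}, \ a_2 w \text{ has color } c_{2 j} \}, \]
and the number of genuine copies of $H$ is obtained by discarding the degenerate terms (in which two chosen vertices coincide) and dividing by $\tmop{Aut} (H)$. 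The degenerate terms will be negligible once the typical value $\mu$ below is large, so I focus on the leading sum.

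Next I would extract the main term. Writing $A_c$ for the adjacency matrix of the color-$c$ subgraph and using $d$-regularity, decompose $A_c = \tfrac{d}{n} J + R_c$ with $\| R_c \| \le (1 + o (1)) \lambda$ and $R_c \mathbbm{1} = 0$. Then $N_j (a_1, a_2) = (A_{c_{1 j}} D A_{c_{2 j}})_{a_1 a_2}$ with $D = \tmop{diag} (\mathbbm{1}_{U_2})$, and the contribution of $\tfrac{d}{n} J$ in both factors is the constant $\mu := (d / n)^2 |U_2 |$, independent of $a_1, a_2$. Summing the product of these constants over all $|U_1 |^2$ pairs gives $|U_1 |^2 \mu^t = |U_1 |^2 |U_2 |^t (d / n)^{2 t}$, which is exactly the claimed count before dividing by $\tmop{Aut} (H)$.

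The heart of the argument, and the step I expect to be the main obstacle, is showing that the remaining contributions are $o (|U_1 |^2 \mu^t)$ precisely under $|U_1 | |U_2 | \ge \lambda^2 (n / d)^{t + 1}$, i.e. obtaining the improved exponent $t + 1$ rather than the $t + 2$ that the general Theorem \ref{bipartite-c-sesb} would give at $s = 2$. Two features of the case $s = 2$ make this possible, and I would build the estimate around them. First, interchanging summation rewrites the leading sum as $\sum_{\vec{w} \in U_2^t} m_1 (\vec{w}) m_2 (\vec{w})$, where $m_i (\vec{w})$ counts the $a \in U_1$ whose edges to $w_1, \ldots, w_t$ carry the colors $c_{i 1}, \ldots, c_{i t}$; because this is a \emph{bilinear} expression in the $t$-fold common-neighbor functions, the analysis reduces to first- and second-moment control of those functions over $\vec{w}$, whereas general $s$ forces control up to the $s$-th moment. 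Second, writing $N_j = \mu + \eta_j$ and expanding, the dominant error pieces have the form $\tfrac{d}{n} (R_c \mathbbm{1}_{U_2})$, which depend on only one of $a_1, a_2$; paired against $\mathbbm{1}_{U_1}$ they become bilinear forms bounded by $\lambda \sqrt{|U_1 | |U_2 |}$, so genuine cancellation, rather than a triangle-inequality bound, is available. The delicate part is to combine these with the pointwise codegree bound $N_j \le (1 + o (1)) \tfrac{d^2}{n} + \lambda^2$ — itself from the spectral gap of $A_{c_{1 j}} A_{c_{2 j}} = \tfrac{d^2}{n} J + R_{c_{1 j}} R_{c_{2 j}}$ — so that the off-diagonal variance and the diagonal contribution $\bar{m} |U_2 |^t$ (with $\bar{m} = |U_1 | (d / n)^t$) are each dominated by the squared mean $\bar{m}^2 |U_2 |^t$; it is this balancing that consumes exactly the hypothesis $|U_1 | |U_2 | \ge \lambda^2 (n / d)^{t + 1}$.

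Throughout, the colored structure is inert. Every codegree, degree-deviation, and spectral-gap estimate is applied one color at a time, with $d$ replaced by $d (1 + o (1))$, so the passage from Theorem \ref{book-sesb} to the colored statement requires no new analytic input. The only bookkeeping change is that the symmetry factor $2! \, t!$ of the uncolored count is replaced by $\tmop{Aut} (H)$, the subgroup of relabellings of the two- and $t$-parts that preserve the prescribed edge coloring, since it is precisely these relabellings that identify ordered tuples representing the same copy of $H$.
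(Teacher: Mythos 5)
Your framework---the ordered count $\sum_{(a_1,a_2)\in U_1^2}\prod_{j=1}^t N_j(a_1,a_2)$, the main term $|U_1|^2|U_2|^t(d/n)^{2t}$, and division by $|\tmop{Aut}(H)|$---is sound and consistent with the paper, but the proposal stops exactly where the theorem lives. The whole content of the statement is that every non-main contribution is $o\bigl(|U_1|^2|U_2|^t(d/n)^{2t}\bigr)$ under the product hypothesis $|U_1||U_2|\geq\lambda^2(n/d)^{t+1}$, and this step you explicitly defer (``the main obstacle'', ``the delicate part''), asserting that the balancing ``consumes exactly the hypothesis'' without proving it. Worse, the reduction you offer in its place is circular: controlling $\sum_{\vec{w}\in U_2^t}m_1(\vec{w})m_2(\vec{w})$ by ``first- and second-moment control'' of the $m_i$ requires knowing $\sum_{\vec{w}}m_i(\vec{w})^2=(1+o(1))|U_1|^2|U_2|^t(d/n)^{2t}$, and that second moment \emph{is} an ordered colored $K_{2,t}$ count (for the pattern in which both vertices of the $2$-side carry the color string $c_{i1},\ldots,c_{it}$), i.e.\ an instance of the very theorem being proved. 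The paper breaks this circularity by induction on $t$: identity (\ref{e11-sesb}) peels off the last vertex $y_t$, Lemma \ref{path-sesb} (the paths-of-length-two estimate) handles the inner sum, and the induction hypothesis supplies the needed level-$(t-1)$ second moment, with the two-parameter $\tilde{o}(g,h)$ notation plus Cauchy--Schwarz and Young-type inequalities doing the balancing (Lemma \ref{book-lemma-sesb}, then colors inserted). Your plan contains no such inductive structure.

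The non-circular alternative you sketch, expanding $N_j=\mu+\eta_j$ with $\eta_j$ built from the matrices $R_c$, is likewise not executed, and the ingredients you list do not obviously reach the stated threshold. For instance, the term in which every factor contributes $\gamma_j=(R_{c_{1j}}DR_{c_{2j}})_{a_1a_2}$ is bounded pointwise only by $\lambda^{2t}$, giving $|U_1|^2\lambda^{2t}$ after summing over pairs; even a sharper Frobenius-norm argument gives only $\lambda^{2t}|U_1||U_2|^{1/2}$. Comparing either bound with the main term forces a lower bound on $|U_2|$ essentially by itself (roughly $|U_2|\gg\lambda^2(n/d)^2$ in the first case), which does \emph{not} follow from a lower bound on the product $|U_1||U_2|$ when $|U_2|\ll|U_1|$. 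Making all of the $3^t$ mixed terms in $\prod_j(\mu+\eta_j)$ simultaneously small relative to the main term under only the product hypothesis requires interpolating each error against the main term---precisely the weighted H\"older/Young manipulations that the paper's $\tilde{o}$-induction packages. Until that estimate is carried out, what you have is a correct identification of the main term and of the difficulty, not a proof.
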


The proof of Theorem \ref{a-c-sesb} is similar to that of \cite[Theorem 4.10]{ks}, the proofs
of Theorem \ref{bipartite-c-sesb} and Theorem \ref{book-c-sesb} are similar to the proofs of Theorem \ref{bipartite-sesb} and
Theorem \ref{book-sesb}, respectively. To simplify the notation, we will only present the proofs
of single-color results. Note that going from single-color formulations
(Theorems \ref{a-sesb}, \ref{bipartite-sesb} and \ref{book-sesb}) to multi-color formulations (Theorems \ref{a-c-sesb}, \ref{bipartite-c-sesb} and \ref{book-c-sesb})
is just a matter of inserting different letters in a couple of places.

Although we cannot improve the conditions on $d$ and $\lambda$ in Theorem \ref{a-c-sesb} (or equivalently Theorem \ref{a-sesb}),
we will show that if the number of colors is large, under a weaker condition,
any large induced subgraph of an $(n, d, \lambda)$-color graphs contains
almost all possible colorings of small complete subgraphs.

\begin{theorem}\label{colored-subgraph-sesb}
  For any $t \geq 2$. Let $G = (V, E)$ be an $(n, d, \lambda)$-colored graph,
  and let $m < n$ satisfy $m \gg \lambda (n / d)^{t / 2}$. Suppose that the
  color set $\mathcal{C}$ has cardinality $|\mathcal{C}| = (1 - o (1)) n / d$,
  then for every subset $U \subset V$ with cardinality $m$, the induced
  subgraph of $G$ on $U$ contains at least $(1 - o (1))
  |\mathcal{C}|^{\binom{t}{2}}$ possible colorings of $K_t$.
\end{theorem}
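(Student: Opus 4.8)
The plan is to prove the theorem by a second–moment argument, turning the count of realised colourings into a clique count in an auxiliary product graph to which Theorem \ref{a-sesb} can be applied. Since each colour class is $d(1+o(1))$-regular and there are $|\mathcal{C}| = (1-o(1))n/d$ colours, every vertex meets $(1-o(1))n$ others, so for all but $o(n^2)$ ordered pairs $(u,v)$ there is a well-defined colour $c(u,v)$ of the edge $uv$. For a colouring $\chi\colon \binom{[t]}{2}\to\mathcal{C}$ of $K_t$, let $W(\chi)$ be the number of ordered $t$-tuples $(v_1,\dots,v_t)$ of distinct vertices of $U$ with $c(v_i,v_j)=\chi_{ij}$ for all $i<j$. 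What we must lower-bound is $R:=|\{\chi:W(\chi)>0\}|$, the number of realised colourings, and by Cauchy--Schwarz
\[
 R \;\geq\; \frac{\bigl(\sum_\chi W(\chi)\bigr)^2}{\sum_\chi W(\chi)^2}.
\]
The numerator is the first moment: $\sum_\chi W(\chi)$ is the number of ordered $t$-cliques of the (near-complete) underlying graph inside $U$, which is $(1-o(1))m^t$. So everything reduces to the bound $\sum_\chi W(\chi)^2 \leq (1+o(1))\,m^{2t}/|\mathcal{C}|^{\binom{t}{2}}$.

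The key step is to recognise $\sum_\chi W(\chi)^2$ as a clique count in a product graph. Let $A_c$ be the adjacency matrix of colour class $c$, and define the graph $\widetilde{G}$ on vertex set $V\times V$ with adjacency matrix $B=\sum_c A_c\otimes A_c$, so that two product-vertices are adjacent exactly when their first coordinates and their second coordinates span edges of one and the same colour. For two $t$-tuples $\mathbf{v}=(v_1,\dots,v_t)$ and $\mathbf{w}=(w_1,\dots,w_t)$, the product-vertices $(v_1,w_1),\dots,(v_t,w_t)$ form a clique in $\widetilde{G}$ exactly when $c(v_i,v_j)=c(w_i,w_j)$ for all $i<j$; a repeated coordinate in either slot forces a non-edge, so such a clique automatically has the $v_i$ distinct and the $w_i$ distinct. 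Hence $\sum_\chi W(\chi)^2$ is precisely the number of ordered $t$-cliques of $\widetilde{G}$ inside $U\times U$. Because within each colour both coordinates have $d(1+o(1))$ choices, $\widetilde{G}$ is $D$-regular with $D=\sum_c d_c^2=(1+o(1))nd$ on $n^2$ vertices, of density $(1+o(1))d/n$.

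The main obstacle is to bound the second eigenvalue $\widetilde{\lambda}$ of $\widetilde{G}$. Writing $A_c=\tfrac{d}{n} J+E_c$ with $\|E_c\|\le\lambda$ and $E_c\mathbf{1}=0$, I expand
\[
 B=\tfrac{d^2}{n^2}|\mathcal{C}|\,J\otimes J+\tfrac{d}{n} J\otimes\Bigl(\textstyle\sum_c E_c\Bigr)+\tfrac{d}{n}\Bigl(\textstyle\sum_c E_c\Bigr)\otimes J+\textstyle\sum_c E_c\otimes E_c,
\]
and analyse the four terms on the decomposition $\mathbb{R}^{V\times V}=(\langle\mathbf{1}\rangle\oplus\mathbf{1}^\perp)^{\otimes 2}$. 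The first term carries the top eigenvalue $nd$ on $\mathbf{1}\otimes\mathbf{1}$ and vanishes elsewhere. The last term, $\sum_c E_c\otimes E_c$, lives on $\mathbf{1}^\perp\otimes\mathbf{1}^\perp$ and, by the triangle inequality, has norm at most $|\mathcal{C}|\lambda^2=(1+o(1))(n/d)\lambda^2$. The two mixed terms live on $\langle\mathbf{1}\rangle\otimes\mathbf{1}^\perp$ and $\mathbf{1}^\perp\otimes\langle\mathbf{1}\rangle$ and each have norm $d\,\|\sum_c E_c\|$ measured on $\mathbf{1}^\perp$; since the union of the colour classes is the complete graph, $\sum_c A_c=J-I$ (more generally a good expander), this restriction is $O(1)$, so the mixed terms are $O(d)$, which is dominated by $(n/d)\lambda^2$ because $\lambda\gtrsim\sqrt{d}$. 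Hence $\widetilde{\lambda}\leq(1+o(1))(n/d)\lambda^2$. Controlling these mixed terms, i.e.\ the spectrum of the union graph, is the delicate point: the crude bound $\|\sum_c E_c\|\le|\mathcal{C}|\lambda$ is far too weak, and one genuinely needs the near-completeness (or strong expansion) of the union to get $\widetilde{\lambda}\approx(n/d)\lambda^2$ rather than $\approx n\lambda$.

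Finally I apply Theorem \ref{a-sesb} to $\widetilde{G}$ with $H=K_t$ (so $r=\binom{t}{2}$, $s=t$, $\Delta=t-1$, $|\tmop{Aut}(K_t)|=t!$) on the subset $U\times U$ of size $m^2$. The required hypothesis $m^2\gg\widetilde{\lambda}\,(n^2/D)^{t-1}=(1+o(1))\widetilde{\lambda}\,(n/d)^{t-1}$ becomes, with $\widetilde{\lambda}\approx(n/d)\lambda^2$, exactly $m^2\gg\lambda^2(n/d)^{t}$, which is precisely the hypothesis $m\gg\lambda(n/d)^{t/2}$. Theorem \ref{a-sesb} then yields the ordered clique count
\[
 \sum_\chi W(\chi)^2=(1+o(1))\,m^{2t}(d/n)^{\binom{t}{2}}=(1+o(1))\,m^{2t}/|\mathcal{C}|^{\binom{t}{2}}.
\]
Substituting this together with the first moment $\sum_\chi W(\chi)=(1-o(1))m^t$ into the Cauchy--Schwarz bound gives $R\geq(1-o(1))|\mathcal{C}|^{\binom{t}{2}}$; since $R$ cannot exceed the total number $|\mathcal{C}|^{\binom{t}{2}}$ of colourings, we conclude $R=(1-o(1))|\mathcal{C}|^{\binom{t}{2}}$, as claimed.
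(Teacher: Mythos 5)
Your reduction of the theorem to two moment estimates, and the identification of $\sum_\chi W(\chi)^2$ with the ordered $K_t$-count of the tensor graph $\widetilde G$ (adjacency matrix $B=\sum_c A_c\otimes A_c$) inside $U\times U$, are both correct; this is a genuinely different route from the paper's. But the step your whole argument hinges on --- the spectral bound $\widetilde\lambda\leq(1+o(1))(n/d)\lambda^2$ --- is not justified by the hypotheses, and it is in fact false for the very colored graphs this theorem is applied to in the paper. The mixed terms act on $\langle\mathbf 1\rangle\otimes\mathbf 1^\perp$ (and its mirror) with norm $\bigl\|\sum_c d_c E_c\big|_{\mathbf 1^\perp}\bigr\|\approx d\,\lambda(G_{\mathrm{un}})$, where $G_{\mathrm{un}}$ is the union of the color classes. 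The hypotheses of Theorem \ref{colored-subgraph-sesb} only say that each color class is an $(n,d(1+o(1)),\lambda)$-graph and that there are $(1-o(1))n/d$ colors; all this implies about $G_{\mathrm{un}}$ is that it is $(1-o(1))n$-regular, so $\lambda(G_{\mathrm{un}})$ may be as large as $o(n)$, making the mixed terms as large as $o(nd)=o(D)$, far above the $\lambda^2 n/d$ you need (recall $\lambda\approx\sqrt d$ typically). Your fallback assumption that ``the union of the colour classes is the complete graph'' is not a hypothesis, and it fails in every application in the paper, since only nonzero values of $N$, $B$, $Q$ serve as colors. Concretely, take the distance coloring of $\mathbbm{F}_q^k$ with $k\geq 4$ (so $n=q^k$, $d=(1+o(1))q^{k-1}$, $\lambda=2q^{(k-1)/2}$): here $G_{\mathrm{un}}$ is the complete graph minus the zero-distance graph $Z$, all the $A_c$ and $A_Z$ are Cayley graphs simultaneously diagonalized by additive characters $\chi_m$, and for $m\neq 0$ isotropic for the dual form one has $|\lambda_Z(m)|\approx q^{\lfloor k/2\rfloor}$. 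Then $\mathbf 1\otimes\chi_m$ is an honest eigenvector of $B$ with eigenvalue $\sum_c d_c\lambda_c(m)=-(1+o(1))q^{k-1}\bigl(\lambda_Z(m)+1\bigr)$, of magnitude about $q^{k-1+\lfloor k/2\rfloor}$, which for $k\geq 4$ is $\gg \lambda^2 n/d=4q^k$. So $\widetilde G$ is genuinely not an $(n^2,D,(n/d)\lambda^2)$-graph and Theorem \ref{a-sesb} cannot deliver your second-moment bound. (A smaller, fixable issue: your first moment $\sum_\chi W(\chi)=(1-o(1))m^t$ does not follow from $G_{\mathrm{un}}$ having $o(n^2)$ non-edges globally, since when $m=o(n)$ those non-edges could a priori cover $U\times U$; one must apply Corollary \ref{edge-c-sesb} per color inside $U$ and use $m\gg\lambda n/d$, which holds because $t\geq 2$.)

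The paper avoids this trap precisely by never invoking the spectrum of the union or of any product graph: its Cauchy--Schwarz is applied one pinned vertex at a time (Lemma \ref{pinned-sesb}), with the first moment a per-color edge count (Corollary \ref{edge-c-sesb} summed over colors) and the second moment a $K_{2,t-1}$-count (Lemma \ref{book-lemma-sesb}) --- both controlled by the individual color classes' $\lambda$ alone --- followed by a pigeonhole iteration (Corollary \ref{main-cor-sesb}) that assembles the $(1-o(1))|\mathcal C|^{\binom{t}{2}}$ colorings of $K_t$. To rescue your global second-moment argument you would need either an extra hypothesis of the form $\lambda(G_{\mathrm{un}})\lesssim\lambda^2 n/d^2$, or a variant of Theorem \ref{a-sesb} that is insensitive to eigenvectors supported on $\langle\mathbf 1\rangle\otimes\mathbf 1^\perp$ and $\mathbf 1^\perp\otimes\langle\mathbf 1\rangle$; neither is available here, so the proof is incomplete exactly at the step you yourself flagged as delicate.
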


The results above could also be considered as a contribution to the
fast-developing comprehensive study of graph theoretical properties of $(n, d,
\lambda)$-graphs, which has recently attracted lots of attention both in
combinatorics and theoretical computer science. For a recent survey about
these fascinating graphs and their properties, we refer the interested reader
to the paper of Krivelevich and Sudakov (\cite{ks}).

\subsection{Norms in sum sets, pinned norms, and norm equations}\label{norm-result-sesb}

Let $\mathbbm{F}_q$ be a finite field with $q = p^d$ elements. Denoting by
$\bar{\mathbbm{F}}$ an algebraic closure of $\mathbbm{F}_q$, and by
$\mathbbm{F}_{q^n} \subset \bar{\mathbbm{F}}$ the unique extension of the
degree $n$ of $\mathbbm{F}$ for $n \geq 1$. The extension $\mathbbm{F}_{q^n}
/\mathbbm{F}_q$ is a Galois extension, with Galois group $G_n$ canonically
isomorphic to $\mathbbm{Z}/\mathbbm{Z}_n$, the isomorphism being the map
$\mathbbm{Z}/\mathbbm{Z}_n \rightarrow G_n$ defined by $1 \mapsto \sigma$,
where $\sigma$ is the Frobenius automorphism of $\mathbbm{F}_{q^n}$ given by
$\sigma (X) = X^q$. Associated to the extension $\mathbbm{F}_n /\mathbbm{F}$,
the norm map $N = N_{\mathbbm{F}_{q^n} /\mathbbm{F}_q} :
\mathbbm{F}_{q^n}^{\ast} \rightarrow \mathbbm{F}^{\ast}_{q^n}$ is defined by
\[ N (X) = \prod_{i = 0}^{n - 1} \sigma^i (X) = \prod_{i = 0}^{n - 1} X^{q^i}
   = X^{\frac{q^{n - 1}}{q - 1}} . \]
The equation $N(X)= \lambda$, for a fixed $\lambda \in \mathbbm{F}_q$, is important in number theory.
Because the extension $\mathbbm{F}_{q^n} /\mathbbm{F}_q$ is separable, the
equation $N (X) = \lambda$ is always solvable with $X \in \mathbbm{F}_{q^n}$
for any $\lambda \in \mathbbm{F}^{\ast}_q$. We are interested in the
solvability of this equation when $X$ is in a sum set of two large subsets of
$\mathbbm{F}_{q^n}$. More precisely, we have the following result.

\begin{theorem}\label{norm-equation-sesb}
  Let $\lambda \in \mathbbm{F}^{\ast}$ and $\mathcal{A}, \mathcal{B} \subseteq
  \mathbbm{F}_{q^n}$, $n \geq 2$. Suppose that $|\mathcal{A}| |\mathcal{B}|
  \geq q^{n + 2}$. Then the equation $N (X + Y) = \lambda$ is solvable in $X
  \in \mathcal{A}$, $Y \in \mathcal{B}$.
\end{theorem}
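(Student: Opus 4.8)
The plan is to build an $(n,d,\lambda)$-graph (or colored graph) that encodes the norm equation and then invoke the complete-bipartite subgraph results from Section \ref{graph-result-sesb}. Concretely, I would work in the vertex set $V = \mathbbm{F}_{q^n}$ and define a graph $G_\lambda$ by joining two elements $u,v \in \mathbbm{F}_{q^n}$ whenever $N(u+v)=\lambda$; equivalently, reversing the sign, join $u$ and $v$ when $N(u-v)=\lambda$, whichever makes the adjacency symmetric and the counting cleanest. A solution to $N(X+Y)=\lambda$ with $X\in\mathcal{A}$, $Y\in\mathcal{B}$ is then exactly an edge of $G_\lambda$ between the vertex subsets $\mathcal{A}$ and $\mathcal{B}$, i.e. a copy of $K_{1,1}$ in the induced bipartite subgraph $G_\lambda[\mathcal{A},\mathcal{B}]$. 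So it suffices to show that $G_\lambda$ is a suitable $(q^n, d, \lambda')$-graph and that the hypothesis $|\mathcal{A}||\mathcal{B}| \geq q^{n+2}$ forces a nonzero edge count.

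The main technical step, and the place I expect the real work to lie, is computing the degree $d$ and the second eigenvalue $\lambda'$ of $G_\lambda$. Since $N(X)=\lambda$ is solvable for every $\lambda\in\mathbbm{F}_q^{*}$ and the norm map is $(q^n-1)/(q-1)$-to-one on $\mathbbm{F}_{q^n}^{*}$, each vertex should have degree roughly $d = (q^n-1)/(q-1) \approx q^{n-1}$, so $n/d \approx q$. The eigenvalue estimate is the crux: I would diagonalize the adjacency operator using the additive characters of $\mathbbm{F}_{q^n}$, writing the number of $z$ with $N(z)=\lambda$ via a standard character-sum expansion and reducing the off-diagonal Fourier coefficients to Gauss-type or Kloosterman-type sums over $\mathbbm{F}_{q^n}$. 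Weil's bound on such sums should yield $\lambda' \lesssim q^{(n-1)/2}$. This is the step where care is needed, both to handle the multiplicative structure of the norm form and to confirm the square-root cancellation; I would either carry out the character-sum computation directly or cite the relevant eigenvalue estimate for norm graphs if it is available in the author's earlier work.

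Once the parameters $d \approx q^{n-1}$ and $\lambda' \lesssim q^{(n-1)/2}$ are in hand, the conclusion follows by applying Theorem \ref{bipartite-sesb} (or Theorem \ref{a-sesb}) with $s=t=1$: the hypothesis needed to guarantee a copy of $K_{1,1}$ in $G_\lambda[\mathcal{A},\mathcal{B}]$ is
\[ |\mathcal{A}||\mathcal{B}| \gg (\lambda')^2 (n/d)^{2} \approx q^{n-1}\cdot q^{2} = q^{n+1}, \]
which is comfortably implied by the stated bound $|\mathcal{A}||\mathcal{B}| \geq q^{n+2}$. In fact the graph results give the stronger quantitative statement that the number of solutions is $(1+o(1))\,|\mathcal{A}||\mathcal{B}|\,(d/n) \approx (1+o(1))\,|\mathcal{A}||\mathcal{B}|/q$, of which mere solvability is an immediate corollary. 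The slight gap between the $q^{n+1}$ threshold that the eigenvalue bound suggests and the $q^{n+2}$ hypothesis in the statement presumably reflects either a cruder degree count near the zero set of $N$ or a deliberately clean constant, and I would reconcile this discrepancy when the eigenvalue computation is made precise.
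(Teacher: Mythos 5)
Your construction is exactly the paper's: the projective norm graph on vertex set $\mathbbm{F}_{q^n}$ with $X \sim Y$ iff $N(X+Y)=\lambda$ (no sign adjustment is needed, since $N(X+Y)=N(Y+X)$ makes this symmetric as it stands), followed by an edge count between $\mathcal{A}$ and $\mathcal{B}$. The paper does this via Lemma \ref{norm-graph-lemma} and Corollary \ref{edge-c-sesb}; note that invoking Theorem \ref{bipartite-sesb} with $s=t=1$, as you suggest, is outside that theorem's stated range $t \geq 2$, so the expander mixing lemma (Corollary \ref{edge-c-sesb}) is the tool you actually want.

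The genuine gap is the step you deferred, and it resolves in the opposite direction from what you expect. The elementary computation (diagonalize by additive characters $\chi$ of $\mathbbm{F}_{q^n}$; the eigenvalues of $A^2$ are $\bigl| \sum_{N(c)=1} \chi(c) \bigr|^2$; parametrize the norm-one elements by $c = d^{\,q-1}$ via Hilbert's Theorem 90; apply Weil's bound to the polynomial $d^{\,q-1}$ of degree $q-1$) gives
\[ \Bigl| \sum_{N(c)=1} \chi(c) \Bigr| \; = \; \frac{1}{q-1} \Bigl| \sum_{d \in \mathbbm{F}_{q^n}^{\ast}} \chi(d^{\,q-1}) \Bigr| \; < \; \frac{(q-2)\, q^{n/2}}{q-1} \; < \; q^{n/2}, \]
so the second eigenvalue bound is $q^{n/2}$, not the $q^{(n-1)/2}$ you posit. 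With $\lambda' = q^{n/2}$ and degree $d \approx q^{n-1}$ on $q^n$ vertices, the mixing lemma forces a positive edge count precisely when $\sqrt{|\mathcal{A}||\mathcal{B}|} > \lambda' q^n/d \approx q^{n/2+1}$, i.e. $|\mathcal{A}||\mathcal{B}| > q^{n+2}$. Thus the hypothesis $q^{n+2}$ is exactly what the available eigenvalue bound yields: there is no slack, no ``cruder degree count near the zero set of $N$,'' and no deliberately lossy constant. Your heuristic $q^{(n-1)/2}$ (square-root cancellation in a sum over the norm-one torus) is in fact true, but it is a Deligne-type bound for hyper-Kloosterman sums, far deeper than the Weil polynomial bound, and nothing you cite supplies it; had you carried the elementary computation through, your own threshold $q^{n+1}$ would have evaporated and the ``discrepancy'' with it. As written, the proof rests on an unproven (and unnecessary) eigenvalue estimate; replacing it with the $q^{n/2}$ bound above completes the argument and recovers the paper's proof verbatim.
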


For any $\mathcal{A}, \mathcal{B} \subset \mathbbm{F}_{q^n}$, define by $N
(\mathcal{A}+\mathcal{B})$ the norm set of the sum set, i.e.
\[ N (\mathcal{A}+\mathcal{B}) =\{N (X + Y) : X \in \mathcal{A}, Y \in
   \mathcal{B}\}. \]
Theorem \ref{norm-equation-sesb} says that if $|\mathcal{A}| |\mathcal{B}| \geq q^{n + 2}$ then
$\mathbbm{F}_q^{\ast} \subset \mathcal{N}(\mathcal{A}+\mathcal{B})$. We will
show that under a slightly stronger condition, one can always find many
elements $X \in \mathcal{A}$ such that the pinned norm set $N_X
(\mathcal{B})$, which is defined by
\[ N_X (\mathcal{B}) =\{N (X + Y) : Y \in \mathcal{B}\}, \]
contains almost all elements in $\mathbbm{F}_q$.

\begin{theorem}\label{norm-pinned-sesb}
  Let $\mathcal{A}, \mathcal{B} \subseteq \mathbbm{F}_n$, $n \geq 2$. Suppose
  that $|\mathcal{A}| \geq |\mathcal{B}|$ and $|\mathcal{A}| |\mathcal{B}| \gg
  q^{n + 2}$. Then there exists a subset $\mathcal{A}'$ of $\mathcal{A}$ with
  cardinality $|\mathcal{A}' | \gtrsim |\mathcal{A}|$ such that for every $X
  \in \mathcal{A}'$, the equation $N (X + Y) = \lambda$ is solvable in $Y \in
  \mathcal{B}$ for at least $(1 - o (1)) q$ values of $\lambda \in
  \mathbbm{F}^{\ast}$.
\end{theorem}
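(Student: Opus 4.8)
The plan is to run a second-moment (variance) argument on a single edge-colored graph and to read off the crucial cross term from Theorem \ref{book-c-sesb}. Form the colored graph $G$ on the vertex set $\mathbbm{F}_{q^n}$ in which the pair $\{X,Y\}$ (with $X+Y\neq 0$) receives the color $N(X+Y)\in\mathbbm{F}_q^{\ast}$. For a fixed color $\lambda$ the graph $G_\lambda$ is regular of degree $d=(q^n-1)/(q-1)\sim q^{n-1}$, and its relevant eigenvalues are the character sums $\sum_{N(Z)=\lambda}\psi(Z)$ over additive characters $\psi$; expanding $1_{N(Z)=\lambda}$ into multiplicative characters turns these into Gauss sums over $\mathbbm{F}_{q^n}$, so they are $O(q^{n/2})$ exactly as in the proof of Theorem \ref{norm-equation-sesb}. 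Thus $G$ is a $(q^n,q^{n-1},q^{n/2})$-colored graph with $n/d\sim q$ and color set $\mathbbm{F}_q^{\ast}$ of size $(1-o(1))q$.

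For $X\in\mathcal{A}$ and $\lambda\in\mathbbm{F}_q^{\ast}$ write $\nu(X,\lambda)=|\{Y\in\mathcal{B}:N(X+Y)=\lambda\}|$ and $S(X)=\sum_\lambda\nu(X,\lambda)=|\{Y\in\mathcal{B}:X+Y\neq 0\}|$; since $N$ vanishes only at $0$ we have $S(X)=(1+o(1))|\mathcal{B}|$. The pinned statement is equivalent to bounding, for most $X$, the number $m(X)$ of colors $\lambda$ with $\nu(X,\lambda)=0$. Since $\sum_{\lambda}\bigl(\nu(X,\lambda)-\tfrac{S(X)}{q-1}\bigr)^2\ge m(X)\bigl(\tfrac{S(X)}{q-1}\bigr)^2$, everything reduces to the total variance $V=\sum_{X\in\mathcal{A}}\sum_{\lambda}\bigl(\nu(X,\lambda)-\tfrac{S(X)}{q-1}\bigr)^2=T-\sum_{X\in\mathcal{A}}\tfrac{S(X)^2}{q-1}$, where $T=\sum_{X\in\mathcal{A}}\sum_\lambda\nu(X,\lambda)^2$ counts ordered monochromatic cherries $Y-X-Y'$ with centre $X\in\mathcal{A}$ and leaves $Y,Y'\in\mathcal{B}$.

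The heart of the argument is to evaluate $T$. Its diagonal part ($Y=Y'$) equals $\sum_X S(X)=(1+o(1))|\mathcal{A}||\mathcal{B}|$, while the off-diagonal part $T_{\mathrm{off}}$ counts, for each color, the monochromatic copies of $K_{2,1}$ with the two-element side in $\mathcal{B}$ and the centre in $\mathcal{A}$. I would apply Theorem \ref{book-c-sesb} with $t=1$, $U_1=\mathcal{B}$, $U_2=\mathcal{A}$ to each of the $q-1$ monochromatic colorings of $K_{2,1}$: the hypothesis $|U_1||U_2|\ge\lambda^2(n/d)^{t+1}\sim q^n\cdot q^2=q^{n+2}$ is exactly the assumption $|\mathcal{A}||\mathcal{B}|\gg q^{n+2}$, and each color contributes $(1+o(1))\tfrac{|\mathcal{B}|^2|\mathcal{A}|}{2}(d/n)^2$ copies (here $|\tmop{Aut}(K_{2,1})|=2$). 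Summing over the $q-1$ colors and passing to ordered pairs gives $T_{\mathrm{off}}=(1+o(1))|\mathcal{A}||\mathcal{B}|^2/q$. Since also $\sum_X S(X)^2/(q-1)=(1+o(1))|\mathcal{A}||\mathcal{B}|^2/q$, the two leading terms of order $|\mathcal{A}||\mathcal{B}|^2/q$ cancel and $V=(1+o(1))|\mathcal{A}||\mathcal{B}|+o(|\mathcal{A}||\mathcal{B}|^2/q)$.

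Finally I would convert the variance bound into the pinned conclusion. Any $X$ missing at least $\varepsilon q$ colors contributes at least $(1+o(1))\varepsilon|\mathcal{B}|^2/q$ to $V$, so the number of such $X$ is at most $V/\bigl((1+o(1))\varepsilon|\mathcal{B}|^2/q\bigr)=O\!\bigl(\tfrac{|\mathcal{A}|q}{\varepsilon|\mathcal{B}|}\bigr)+o(|\mathcal{A}|/\varepsilon)$. The point that makes this $o(|\mathcal{A}|)$ is that $|\mathcal{A}|\le q^n$ forces $|\mathcal{B}|\gg q^{n+2}/|\mathcal{A}|\ge q^2$, whence $q/|\mathcal{B}|=o(1)$; letting $\varepsilon\to 0$ slowly then yields the subset $\mathcal{A}'\subseteq\mathcal{A}$ with $|\mathcal{A}'|=(1-o(1))|\mathcal{A}|$ on which every $X$ realizes $(1-o(1))q$ values of $\lambda$. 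The main obstacle is the clean evaluation of $T_{\mathrm{off}}$: I must use the genuine asymptotic count of Theorem \ref{book-c-sesb} (not merely an upper bound), so that the two terms of order $|\mathcal{A}||\mathcal{B}|^2/q$ cancel to leading order, and I must check that the $o(1)$ error there is uniform over the $q-1$ colors, since that many estimates are summed.
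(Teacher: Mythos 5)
Your proof is correct and is essentially the paper's own argument: both constructions build the edge-colored projective norm graph on $\mathbbm{F}_{q^n}$ (a $(q^n,q^{n-1},q^{n/2})$-colored graph with the $q-1$ colors $N(X+Y)\in\mathbbm{F}_q^{\ast}$) and run a second-moment argument whose only nontrivial input is the asymptotic count of monochromatic paths of length two with centre in $\mathcal{A}$ and endpoints in $\mathcal{B}$, under the identical threshold $|\mathcal{A}||\mathcal{B}|\gg q^{n+2}$. The paper packages this as Cauchy--Schwarz on extendability indicators per color (Lemma \ref{e-pinned-sesb}, built from Lemmas \ref{star-sesb} and \ref{book-lemma-sesb}) followed by averaging over the $q-1$ colors, whereas you package it as a global variance plus Chebyshev via Theorem \ref{book-c-sesb} at $t=1$; these are equivalent conversions of the same moment computations, and your two flagged concerns are benign, since Theorem \ref{book-c-sesb} does give genuine asymptotics and the error terms are uniform over colors (indeed all graphs $\mathcal{N}\mathcal{G}_{q,n}(\lambda)$, $\lambda\in\mathbbm{F}_q^{\ast}$, are isomorphic and the $\tilde{o}$ bounds depend only on the common parameters).
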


We also obtain the following results on the solvability of systems of norm
equations over finite fields.

\begin{theorem}\label{norm-fs-sesb}
  Let $\mathcal{A} \subseteq \mathbbm{F}_{q^n}$, $n \geq 2$. Consider the
  systems $\mathcal{L}$ of $l \leq \binom{t}{2}$ norm equations
  \begin{equation}\label{s1-sesb}
    N (X_i + X_j) = \lambda_{i j}, \: X_i \in \mathcal{A}, i = 1, \ldots, t.
  \end{equation}
  Suppose that each variable appears in at most $k \leq t - 1$ equations, and
  $|\mathcal{A}| \gg q^{n / 2 + t - 1}$. Then for any $\lambda_{i j} \in
  \mathbbm{F}_q^{\ast}$, the above system has
  \[ (1 + o (1)) q^{- l} |\mathcal{A}|^t \]
  solutions.
\end{theorem}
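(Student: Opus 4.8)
The plan is to recast the solution count of \eqref{s1-sesb} as a count of color-preserving copies of a small graph inside a large subset of an $(n,d,\lambda)$-colored graph, and then to quote Theorem \ref{a-c-sesb}. First I would build the ambient colored graph $G$ on the vertex set $V=\mathbbm{F}_{q^n}$ by coloring the pair $\{X,Y\}$ with the field element $N(X+Y)\in\mathbbm{F}_q$; for $\lambda\in\mathbbm{F}_q^{*}$ the color-$\lambda$ class $G_\lambda$ is the graph $X\sim Y\iff N(X+Y)=\lambda$. I would discard the color $0$, which only contributes the matching $X\sim -X$ and never occurs among the prescribed values $\lambda_{ij}\in\mathbbm{F}_q^{*}$. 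In these terms the system \eqref{s1-sesb} is exactly the problem of counting embeddings into $\mathcal{A}$ of the edge-colored ``dependency graph'' $H$ on vertices $\{1,\dots,t\}$, with an edge $ij$ colored $\lambda_{ij}$ for each equation. Thus $H$ has $t$ vertices, $l$ edges, and maximum degree $\Delta(H)=k$, since each variable lies in at most $k$ equations.

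The key step is to verify that $G$ is an $(q^n,d,\mu)$-colored graph with $d=(q^n-1)/(q-1)$ and $\mu\lesssim q^{n/2}$. The degree is immediate: the neighbours of a fixed $X$ in $G_\lambda$ are the $Y=Z-X$ with $N(Z)=\lambda$, and $|\{Z:N(Z)=\lambda\}|=(q^n-1)/(q-1)$ for every $\lambda\in\mathbbm{F}_q^{*}$, so each color class is exactly $d$-regular. For the spectrum, write the adjacency matrix of $G_\lambda$ as $A_{X,Y}=f_\lambda(X+Y)$ with $f_\lambda=\mathbbm{1}_{\{N=\lambda\}}$. Because the entries depend on $X+Y$ rather than $X-Y$, the additive characters $\chi_b(\cdot)=\chi(b\,\cdot)$ of $\mathbbm{F}_{q^n}$ satisfy $A\chi_b=\widehat{f_\lambda}(b)\,\chi_{-b}$, so $A$ preserves each plane $\mathrm{span}\{\chi_b,\chi_{-b}\}$ and its eigenvalues are $\pm|\widehat{f_\lambda}(b)|$, where $\widehat{f_\lambda}(b)=\sum_{N(Z)=\lambda}\chi(bZ)$. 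The top eigenvalue is $\widehat{f_\lambda}(0)=d$, and for $b\ne 0$ I would expand the condition $N(Z)=\lambda$ over the multiplicative characters $\eta$ of $\mathbbm{F}_q^{*}$:
\[
\widehat{f_\lambda}(b)=\frac{1}{q-1}\sum_{\eta}\eta(\lambda)^{-1}\sum_{Z\ne 0}(\eta\circ N)(Z)\,\chi(bZ).
\]
Since $N$ is surjective, $\eta\circ N$ is a multiplicative character of $\mathbbm{F}_{q^n}$ which is trivial exactly when $\eta$ is, so the inner sum equals $-1$ for trivial $\eta$ and is a Gauss sum over $\mathbbm{F}_{q^n}$ of modulus $q^{n/2}$ otherwise. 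Hence $|\widehat{f_\lambda}(b)|\le(1+(q-2)q^{n/2})/(q-1)=(1+o(1))q^{n/2}$, uniformly in $\lambda$ and in $b\ne 0$, giving $\mu\lesssim q^{n/2}$. This Gauss-sum estimate is the real content of the argument; everything else is bookkeeping.

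It remains to feed these parameters into Theorem \ref{a-c-sesb}. With $n'=q^n$ vertices we have $n'/d=(q-1)(1-q^{-n})^{-1}=(1+o(1))q$, so the hypothesis of Theorem \ref{a-c-sesb} reads $m\gg \mu(n'/d)^{\Delta}=(1+o(1))q^{n/2+k}$, which is implied by $|\mathcal{A}|\gg q^{n/2+t-1}$ because $k\le t-1$. Taking $V'=\mathcal{A}$ and $m=|\mathcal{A}|$, Theorem \ref{a-c-sesb} yields $(1+o(1))\,|\mathcal{A}|^{t}\,|\tmop{Aut}(H)|^{-1}(d/n')^{l}$ colored copies of $H$ in $\mathcal{A}$. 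Each unlabelled copy corresponds to exactly $|\tmop{Aut}(H)|$ ordered tuples $(X_1,\dots,X_t)$ solving \eqref{s1-sesb}, and $(d/n')^{l}=\big((q-1)^{-1}(1-q^{-n})\big)^{l}=(1+o(1))q^{-l}$ for fixed $l$, so the number of solutions with distinct coordinates is $(1+o(1))q^{-l}|\mathcal{A}|^{t}$, as claimed. The only remaining points are that the color-$0$ class is irrelevant (handled above) and that solutions with a coincidence $X_i=X_j$ are lower order: such a coincidence collapses the system to a norm system on fewer variables, whose contribution is estimated by the same method and is $o(q^{-l}|\mathcal{A}|^{t})$ in the stated range. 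I expect the Gauss-sum eigenvalue bound to be the main obstacle, with the degenerate tuples a routine but necessary check.
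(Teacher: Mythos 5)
Your proposal is correct and follows essentially the same route as the paper: you build the edge-colored projective norm graph on $\mathbbm{F}_{q^n}$ (colors $\lambda=N(X+Y)\in\mathbbm{F}_q^{*}$), show each color class is a $\bigl(q^n,(q^n-1)/(q-1),q^{n/2}\bigr)$-graph by a character-sum estimate (the paper invokes Weil's bound for $\sum_{d}\chi(d^{q-1})$ via Hilbert 90, while you expand the norm fiber in multiplicative characters and bound Gauss sums --- the same estimate in different clothing), and then apply Theorem \ref{a-c-sesb} to the dependency graph $H$ with $t$ vertices, $l$ edges and maximum degree $k$. Your explicit handling of the automorphism factor and of degenerate tuples $X_i=X_j$ is a detail the paper leaves implicit, but it does not change the approach.
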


\begin{theorem}\label{norm-ps-sesb}
  Let $\mathcal{A} \subseteq \mathbbm{F}_{q^n}, n \geq 2$. Consider the system
  (\ref{s1-sesb}) with $\binom{t}{2}$ equations. Suppose that
  $|\mathcal{A}| \gg q^{(n + t) / 2}$, then that system is solvable for at
  least $(1 - o (1)) q^{\binom{t}{2}}$ choices of $\lambda_{i j} \in
  \mathbbm{F}_q$, $1 \leq i < j \leq t$.
\end{theorem}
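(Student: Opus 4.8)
The plan is to prove Theorem \ref{norm-ps-sesb} by reducing the solvability of the system of norm equations to a subgraph-counting problem in an appropriately constructed $(n, d, \lambda)$-colored graph, and then applying the colored $K_t$ result, Theorem \ref{colored-subgraph-sesb}. The key observation is that the norm map $N : \mathbbm{F}_{q^n}^\ast \to \mathbbm{F}_q^\ast$ identifies $\mathbbm{F}_{q^n}$ with an $n$-dimensional vector space over $\mathbbm{F}_q$, and the bilinear-type form underlying $N(X_i + X_j)$ can be read off from the values $N(X_i)$, $N(X_j)$, and a symmetric pairing. More precisely, I would first construct a colored graph on vertex set $\mathbbm{F}_{q^n}$ (identified with $\mathbbm{F}_q^n$) where two vertices $X_i, X_j$ are joined by an edge colored $\lambda_{ij}$ precisely when $N(X_i + X_j) = \lambda_{ij}$. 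A configuration $(X_1, \ldots, X_t)$ solving the full system of $\binom{t}{2}$ equations is then exactly a copy of $K_t$ in this graph whose edge $\{i,j\}$ carries the prescribed color $\lambda_{ij}$.

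The first step is therefore to verify that this construction yields an $(n, d, \lambda)$-colored graph with the right parameters. I would show that the graph on each fixed color class is regular of degree $d = (1 + o(1)) q^{n-1}$ (since $N(X+Y) = \lambda$ has roughly $q^{n-1}$ solutions $Y$ for each $X$ and each $\lambda \in \mathbbm{F}_q^\ast$), over a vertex set of size $N := q^n$, and that its second eigenvalue is bounded by $\lambda_G = O(q^{(n-1)/2})$. This spectral bound is the technical heart: I expect to obtain it via a Gauss-sum / character-sum estimate, writing the relevant eigenvalue as a sum of Kloosterman-type or Gauss-type sums over additive and multiplicative characters of $\mathbbm{F}_{q^n}$ and $\mathbbm{F}_q$, and invoking the standard square-root cancellation. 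The number of colors is $|\mathcal{C}| = q - 1 = (1 - o(1)) N/d$, which matches exactly the hypothesis $|\mathcal{C}| = (1 - o(1)) n/d$ of Theorem \ref{colored-subgraph-sesb} (with the role of $n$ there played by $N = q^n$ here).

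With the graph in hand, the second step is a direct application of Theorem \ref{colored-subgraph-sesb}. The hypothesis there requires $m \gg \lambda_G (N/d)^{t/2}$; substituting $\lambda_G = O(q^{(n-1)/2})$, $N/d = O(q)$, and $m = |\mathcal{A}|$ gives the condition $|\mathcal{A}| \gg q^{(n-1)/2} \cdot q^{t/2} = q^{(n+t-1)/2}$, which is implied by the stated hypothesis $|\mathcal{A}| \gg q^{(n+t)/2}$. Theorem \ref{colored-subgraph-sesb} then guarantees that the induced subgraph on $\mathcal{A}$ contains at least $(1 - o(1)) |\mathcal{C}|^{\binom{t}{2}}$ distinct colorings of $K_t$. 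Translating back, each such coloring corresponds to a tuple $(\lambda_{ij})_{1 \le i < j \le t} \in (\mathbbm{F}_q^\ast)^{\binom{t}{2}}$ for which the system \eqref{s1-sesb} is solvable in $\mathcal{A}$, yielding $(1 - o(1)) q^{\binom{t}{2}}$ solvable choices, as claimed.

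The main obstacle I anticipate is not the reduction itself but the verification of the spectral and regularity properties of the colored graph, particularly establishing that each color class is $(N, d(1+o(1)), \lambda_G)$-regular with $\lambda_G = O(q^{(n-1)/2})$. One must handle the degenerate contribution of the zero norm and the solutions where $X_i + X_j$ is not invertible, and confirm these exceptional cases do not spoil the regularity up to the $(1 + o(1))$ factor. A secondary subtlety is checking that the color count $q - 1$ really satisfies the delicate $(1 - o(1)) N/d$ condition rather than merely being of the correct order of magnitude, since Theorem \ref{colored-subgraph-sesb} is sensitive to the constant. Once these eigenvalue estimates are in place, the combinatorial conclusion follows mechanically from the earlier graph-theoretic machinery.
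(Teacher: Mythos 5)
Your proposal is correct and follows essentially the same route as the paper: there, one colors the graph on $\mathbbm{F}_{q^n}$ by giving the edge $\{X,Y\}$ the color $N(X+Y)$ whenever this lies in $\mathbbm{F}_q^{\ast}$, invokes Lemma \ref{norm-graph-lemma} to see that this is a $(q^n,(1+o(1))q^{n-1},q^{n/2})$-colored graph with $q-1=(1-o(1))\,q^n/q^{n-1}$ colors, and applies Theorem \ref{colored-subgraph-sesb}, whose hypothesis $m\gg\lambda(n/d)^{t/2}$ becomes exactly $|\mathcal{A}|\gg q^{n/2}\cdot q^{t/2}=q^{(n+t)/2}$. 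The one place you overreach is the spectral bound: expanding an eigenvalue as $\frac{1}{q-1}\sum_{\psi}\bar\psi(\lambda)\,G(\psi\circ N,\chi)$ over multiplicative characters $\psi$ of $\mathbbm{F}_q$ expresses it as a sum of $q-2$ Gauss sums, each of modulus exactly $q^{n/2}$, so the ``standard'' Weil-type estimate used in the paper gives only $\lambda_G\leq q^{n/2}$, not your claimed $O(q^{(n-1)/2})$; the sharper bound is in fact true, but it requires the Hasse--Davenport relation together with Deligne's bound for hyper-Kloosterman sums (genuine cancellation among unimodular-phase Gauss sums), which is far deeper than routine square-root cancellation. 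Fortunately this does not matter: with the weaker bound $q^{n/2}$ your reduction yields precisely the stated hypothesis $|\mathcal{A}|\gg q^{(n+t)/2}$, so your argument is complete once you substitute the eigenvalue estimate the paper actually proves. (Incidentally, your heuristic that $N(X_i+X_j)$ is determined by $N(X_i)$, $N(X_j)$ and a symmetric pairing is valid only for $n=2$, since the norm is a form of degree $n$; but nothing in your argument uses this remark.)
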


\subsection{Dot product set and system of bilinear equations} \label{product-result-sesb}

Let $\mathcal{E}, \mathcal{F} \subset \mathbbm{F}_q^d =\mathbbm{F}_q \times
\ldots \times \mathbbm{F}_q$, $d \geq 2$. For any non-degenerate bilinear form
$B (\cdot, \cdot)$ on $\mathbbm{F}_q^d$, define the \ product set of
$\mathcal{E}$ and $\mathcal{F}$ with respect to $B$ by
\[ B (\mathcal{E}, \mathcal{F}) =\{B (\tmmathbf{x}, \tmmathbf{y}) :
   \tmmathbf{x} \in \mathcal{E}, \tmmathbf{y} \in \mathcal{F}\}. \]
Hart and Iosevich (\cite{hi}), using character sum machinery, proved the following
result.

\begin{theorem}(\cite[Theorem 2.1]{hi})\label{bilinear-equation-sesb}
  Let $\mathcal{E}, \mathcal{F} \subset \mathbbm{F}_q^d$. Suppose that
  $|\mathcal{E}| |\mathcal{F}| \geq q^{d + 1}$, then $\mathbbm{F}_q^{\ast}
  \subseteq B (\mathcal{E}, \mathcal{F})$.
\end{theorem}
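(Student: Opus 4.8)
The plan is to recast the solvability of $B (\tmmathbf{x}, \tmmathbf{y}) = \lambda$ as the existence of an edge in a suitable pseudorandom graph, and then read off that edge from a spectral (expander mixing) estimate. First I would normalize the form. Writing $B (\tmmathbf{x}, \tmmathbf{y}) = \tmmathbf{x}^T A \tmmathbf{y}$ with $A$ invertible (this is exactly non-degeneracy), the substitution $\tmmathbf{z} = A \tmmathbf{y}$ is a bijection of $\mathbbm{F}_q^d$ carrying $\mathcal{F}$ to a set $\mathcal{F}'$ with $|\mathcal{F}' | = |\mathcal{F}|$ and turning $B (\tmmathbf{x}, \tmmathbf{y})$ into the standard inner product $\tmmathbf{x} \cdot \tmmathbf{z}$. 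Thus it suffices to show that whenever $|\mathcal{E}| |\mathcal{F}' | \geq q^{d + 1}$ and $\lambda \in \mathbbm{F}_q^{\ast}$, there are $\tmmathbf{x} \in \mathcal{E}$, $\tmmathbf{z} \in \mathcal{F}'$ with $\tmmathbf{x} \cdot \tmmathbf{z} = \lambda$.

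Next I would introduce the \emph{product graph} $D_{\lambda}$ on the nonzero vectors of $\mathbbm{F}_q^d$, joining $\tmmathbf{x}$ and $\tmmathbf{z}$ exactly when $\tmmathbf{x} \cdot \tmmathbf{z} = \lambda$. Since $\lambda \neq 0$, the relation is symmetric and forces both vectors to be nonzero, and each vertex has degree $q^{d - 1}$. The technical core is to verify that $D_{\lambda}$ is an $(n, d, \lambda)$-graph with $n = q^d - 1$, degree $q^{d - 1}$, and second eigenvalue at most $q^{(d - 1) / 2}$. For this I would pass to $M^2$, where $M$ is the adjacency matrix: for a linearly independent pair $\tmmathbf{x}, \tmmathbf{x}'$ the system $\tmmathbf{x} \cdot \tmmathbf{y} = \tmmathbf{x}' \cdot \tmmathbf{y} = \lambda$ has precisely $q^{d - 2}$ common solutions $\tmmathbf{y}$, the ``main term,'' while collinear pairs behave differently. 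Writing $M^2 = q^{d - 2} J + E$ (with $J$ the all-ones matrix), the correction $E$ is block-diagonal over the lines through the origin, and on each $(q - 1)$-dimensional block it equals $q^{d - 1} I - q^{d - 2} J$, of spectral radius $q^{d - 1}$. Because the all-ones vector is the Perron eigenvector and $J v = 0$ for $v$ orthogonal to it, one obtains $\| M v \|^2 = \langle v, E v \rangle \leq q^{d - 1} \| v \|^2$, i.e. $\lambda (D_{\lambda}) \leq q^{(d - 1) / 2}$.

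Finally I would apply the expander mixing lemma for $(n, d, \lambda)$-graphs (the single-edge, two-set specialization of the pseudorandom counting used throughout the paper) to $\mathcal{E}$ and $\mathcal{F}'$, giving
\[ \left| e (\mathcal{E}, \mathcal{F}') - \frac{q^{d - 1}}{q^d - 1} |\mathcal{E}| |\mathcal{F}' | \right| \leq q^{(d - 1) / 2} \sqrt{|\mathcal{E}| |\mathcal{F}' |}, \]
where $e (\mathcal{E}, \mathcal{F}')$ counts the pairs with $\tmmathbf{x} \cdot \tmmathbf{z} = \lambda$. The main term is at least $|\mathcal{E}| |\mathcal{F}' | / q$, the error is $q^{(d - 1) / 2} \sqrt{|\mathcal{E}| |\mathcal{F}' |}$, and the former exceeds the latter as soon as $|\mathcal{E}| |\mathcal{F}' | \geq q^{d + 1}$, where the small gain from $n = q^d - 1 < q^d$ absorbs the boundary case. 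Hence $e (\mathcal{E}, \mathcal{F}') > 0$, which yields the required pair, so $\lambda \in B (\mathcal{E}, \mathcal{F})$; as $\lambda \in \mathbbm{F}_q^{\ast}$ was arbitrary, $\mathbbm{F}_q^{\ast} \subseteq B (\mathcal{E}, \mathcal{F})$.

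The main obstacle is the eigenvalue bound, and within it the bookkeeping of the collinear (degenerate) pairs: the clean count $q^{d - 2}$ is valid only for linearly independent $\tmmathbf{x}, \tmmathbf{x}'$, and it is the block structure of the correction matrix $E$ over the lines through the origin that must be controlled to pin $\lambda (D_{\lambda})$ at $q^{(d - 1) / 2}$ --- precisely the value that makes the threshold $q^{d + 1}$ sharp. Everything else, the reduction to the inner product and the final mixing estimate, is routine once this eigenvalue is in hand.
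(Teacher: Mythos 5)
Your proposal is correct and follows the same route as the paper's Section~\ref{section:bilinear}: encode solutions of $B(\tmmathbf{x},\tmmathbf{y})=\lambda$ as edges of the product graph on nonzero vectors, compute the square of the adjacency matrix via common-neighbor counts, and finish with the mixing bound of Corollary~\ref{edge-c-sesb}. The substantive difference is your treatment of the error matrix, and it is a genuine improvement that the stated constant actually needs. The paper's Lemma~\ref{product-graph-lemma} bounds the correction crudely through the regularity of the linear-dependence graph and records only $\lambda(B_{q,d}(\lambda))\le\sqrt{2q^{d-1}}$; fed into Corollary~\ref{edge-c-sesb}, this yields a positive edge count only when $|\mathcal{E}||\mathcal{F}|>2(q^d-1)^2/q^{d-1}$, which is roughly $2q^{d+1}$, so the threshold $q^{d+1}$ of Theorem~\ref{bilinear-equation-sesb} does not follow from that lemma as written --- it needs the sharper bound $\theta^2\le q^{d-1}$, which the paper relegates to an unproved parenthetical remark (and the surrounding computation is marred by typos: the second $J$ in the decomposition should be $I$, and the entries of the correction at linearly dependent pairs should carry a factor $q^{d-2}$ that is dropped). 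Your block-diagonal identification of the correction --- one block $q^{d-1}I-q^{d-2}J$ of size $q-1$ for each line through the origin --- pins the nontrivial eigenvalues exactly, $\theta^2\in\{q^{d-2},q^{d-1}\}$, hence $\lambda(D_\lambda)\le q^{(d-1)/2}$, which is precisely the bound that makes $q^{d+1}$ suffice; it also gives connectivity and non-bipartiteness for free, which the paper argues separately. Two minor remarks: the initial reduction to the dot product is unnecessary, since the common-neighbor count works verbatim for any non-degenerate $B$; and, exactly as in the paper's own proof, the vector $\tmmathbf{0}$ is not a vertex and must be discarded from $\mathcal{E}$ and $\mathcal{F}$ first, which is harmless except in the razor-thin boundary case $|\mathcal{E}||\mathcal{F}|=q^{d+1}$ with $\tmmathbf{0}$ in one of the sets --- a blemish your write-up shares with the paper rather than introduces.
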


Let $\mathcal{E} \subset \mathbbm{F}^d_q$, $d \geq 2$. Define the pinned product set by
\[ B_{\tmmathbf{y}} (\mathcal{E}) =\{B (\tmmathbf{x}, \tmmathbf{y}) :
   \tmmathbf{x} \in \mathcal{E}\}. \]
Chapman et al. (\cite{chapman}) obtained the following result using Fourier analytic methods.

\begin{theorem} (\cite[Theorem 2.4]{chapman}) \label{bilinear-old-pinned-sesb}
  Let $\mathcal{E} \subset \mathbbm{F}^d$ ($d \geq 2$) of cardinality
  $|\mathcal{E}| \geq q^{(d + 1) / 2}$. Then there exists a subset
  $\mathcal{E}' \subset \mathcal{E}$ with cardinality $|\mathcal{E}' | \gtrsim
  |\mathcal{E}|$ such that for every $\tmmathbf{y} \in \mathcal{E}'$, one has
  $|B_{\tmmathbf{y}} (\mathcal{E}) | > q / 2$.
\end{theorem}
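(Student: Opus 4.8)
The plan is to run a second-moment (variance) argument on the distribution of product values at each pin. For $\tmmathbf{y}\in\mathcal{E}$ and $\lambda\in\mathbbm{F}_q$ set $f_{\tmmathbf{y}}(\lambda)=|\{\tmmathbf{x}\in\mathcal{E}:B(\tmmathbf{x},\tmmathbf{y})=\lambda\}|$, so that $\sum_\lambda f_{\tmmathbf{y}}(\lambda)=|\mathcal{E}|$ and $|B_{\tmmathbf{y}}(\mathcal{E})|$ is exactly the number of $\lambda$ with $f_{\tmmathbf{y}}(\lambda)>0$. First I would introduce the product graph $\mathcal{P}$ on the vertex set $\mathbbm{F}_q^d$ in which the edge $\{\tmmathbf{x},\tmmathbf{y}\}$ is colored by $B(\tmmathbf{x},\tmmathbf{y})$; using non-degeneracy of $B$, each nonzero color class is $q^{d-1}$-regular (up to the single vertex $\tmmathbf{0}$), and a standard Gauss-sum computation shows $\mathcal{P}$ is an $(n,D,\theta)$-colored graph in the sense of Section \ref{graph-result-sesb} with $n=q^d$, regularity $D=q^{d-1}$ and second eigenvalue $\theta=q^{(d-1)/2}$. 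A perfectly equidistributed $\mathcal{E}$ would have $f_{\tmmathbf{y}}(\lambda)\equiv|\mathcal{E}|/q$ and hence $|B_{\tmmathbf{y}}(\mathcal{E})|=q$ for every pin, so the whole point is that the spectral gap should force almost all pins close to this ideal.

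The quantitative heart is the estimate of
\[ S:=\sum_{\tmmathbf{y}\in\mathcal{E}}\sum_{\lambda\in\mathbbm{F}_q}f_{\tmmathbf{y}}(\lambda)^2=\bigl|\{(\tmmathbf{x},\tmmathbf{x}',\tmmathbf{y})\in\mathcal{E}^3:B(\tmmathbf{x}-\tmmathbf{x}',\tmmathbf{y})=0\}\bigr|, \]
the number of monochromatic cherries centered in $\mathcal{E}$ with both leaves in $\mathcal{E}$. I would bound $S$ by the second-moment (expander-mixing) estimate supplied by the spectrum of $\mathcal{P}$; concretely, expanding the indicator of $B(\tmmathbf{x}-\tmmathbf{x}',\tmmathbf{y})=0$ with a nontrivial additive character $\chi$ of $\mathbbm{F}_q$, the $t=0$ frequency gives the main term $|\mathcal{E}|^3/q$ and the remaining sum $\frac1q\sum_{t\ne0}\sum_{\tmmathbf{y}\in\mathcal{E}}|\sum_{\tmmathbf{x}\in\mathcal{E}}\chi(tB(\tmmathbf{x},\tmmathbf{y}))|^2$ is nonnegative and, upon completing the $\tmmathbf{y}$-sum and using non-degeneracy, is $O(q^d|\mathcal{E}|)$. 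This yields $S=\frac{|\mathcal{E}|^3}{q}+E$ with $0\le E<q^d|\mathcal{E}|$. Introducing the variance $V:=\sum_{\tmmathbf{y}\in\mathcal{E}}\sum_\lambda\bigl(f_{\tmmathbf{y}}(\lambda)-|\mathcal{E}|/q\bigr)^2$, the identity $\sum_\lambda f_{\tmmathbf{y}}(\lambda)=|\mathcal{E}|$ collapses it to $V=S-\frac{|\mathcal{E}|^3}{q}=E<q^d|\mathcal{E}|$.

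Finally I would convert small variance into few bad pins. Call $\tmmathbf{y}$ bad if $|B_{\tmmathbf{y}}(\mathcal{E})|\le q/2$; then at least $q/2$ of the bins $f_{\tmmathbf{y}}(\lambda)$ vanish, so the inner sum defining $V$ is at least $\frac q2\cdot(|\mathcal{E}|/q)^2=\frac{|\mathcal{E}|^2}{2q}$ for each bad $\tmmathbf{y}$. Hence the number of bad pins is at most $2qV/|\mathcal{E}|^2<2q^{d+1}/|\mathcal{E}|$, which is $o(|\mathcal{E}|)$ once $|\mathcal{E}|\gg q^{(d+1)/2}$ (and at most $\tfrac12|\mathcal{E}|$ already when $|\mathcal{E}|\ge Cq^{(d+1)/2}$ with $C$ large), so taking $\mathcal{E}'$ to be the complement of the bad pins gives $|\mathcal{E}'|\gtrsim|\mathcal{E}|$ with $|B_{\tmmathbf{y}}(\mathcal{E})|>q/2$ throughout. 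The step I expect to be the real obstacle is obtaining the variance in the sharp form $V=o(|\mathcal{E}|^3/q)$: a direct Markov inequality applied to $\sum_\lambda f_{\tmmathbf{y}}(\lambda)^2$ fails, since a typical good pin already has this quantity of order $|\mathcal{E}|^2/q$, so one must subtract the main term $|\mathcal{E}|^3/q$ and control only the genuine deviation $E$. This is precisely where the exponent $(d+1)/2$ is forced, because $E<q^d|\mathcal{E}|$ beats $|\mathcal{E}|^3/q$ exactly when $|\mathcal{E}|^2\gg q^{d+1}$. Care is also needed for the degenerate contributions (the pin $\tmmathbf{y}=\tmmathbf{0}$ and the $\lambda=0$ bin), but these affect only lower-order terms.
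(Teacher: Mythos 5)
Your argument is mathematically sound, and it lands in essentially the same place as the paper, but the route is genuinely different in its key quantitative step. First, a point of calibration: the paper does not actually prove the literal statement (with the constant-$1$ threshold $|\mathcal{E}| \geq q^{(d+1)/2}$) either; it explicitly relaxes the hypothesis to $|\mathcal{E}| \gg q^{(d+1)/2}$ and proves the stronger-conclusion variant, Theorem \ref{bilinear-pinned-sesb}, instead, attributing the constant-$1$ version to Chapman et al. Your proof is in exactly the same position: your bound of $2q^{d+1}/|\mathcal{E}|$ on the bad pins is vacuous when $|\mathcal{E}| = q^{(d+1)/2}$ and only becomes effective for $|\mathcal{E}| \geq Cq^{(d+1)/2}$ with $C > \sqrt{2}$ (you correctly flag this), so relative to the paper's own treatment this is not a gap, though relative to the literal statement both proofs fall short by a constant factor.

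Where you diverge is in how the second moment is obtained. The paper runs everything through its spectral-graph machinery: the product graph $B_{q,d}(\lambda)$ is shown to be a $(q^d-1, q^{d-1}, \sqrt{2q^{d-1}})$-graph by a common-neighbor/eigenvector computation (Lemma \ref{product-graph-lemma}); the first and second moments of the colored star counts then come from the expander-mixing estimates (Corollary \ref{edge-c-sesb}, Lemma \ref{path-sesb}, and Lemmas \ref{star-sesb}, \ref{book-lemma-sesb}); and the pinned conclusion is extracted by Cauchy--Schwarz plus pigeonhole (Lemma \ref{e-pinned-sesb}). You instead compute the second moment $S$ directly by completing an additive character sum over $\tmmathbf{y} \in \mathbbm{F}_q^d$ and using non-degeneracy of $B$ — which is precisely the Fourier-analytic computation of Hart--Iosevich and Chapman et al.\ that this paper is deliberately replacing — and then finish with a variance/Markov count of bad pins rather than Cauchy--Schwarz. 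The two finishes are equivalent faces of the second-moment method, and indeed your error bound $E < q^d|\mathcal{E}|$ is exactly what Lemma \ref{path-sesb} delivers at this threshold. What your version buys is self-containedness, a clean one-sided error term, and explicit constants (e.g. $|\mathcal{E}| \geq 2q^{(d+1)/2}$ gives $|\mathcal{E}'| \geq |\mathcal{E}|/2$); what the paper's version buys is uniformity — once the graph machinery is in place, this theorem, the stronger Theorem \ref{bilinear-pinned-sesb}, and all the norm, distance, and system-of-equations applications follow from the same lemmas without any further character-sum work.
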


Note that \cite[Theorem 2.1]{hi} and \cite[Theorem 2.4]{chapman} are stated only for the dot product, but their proofs go through for
any non-degenerate bilinear form without any essential change. As a corollary
of our results in Section \ref{graph-result-sesb}, we will give graph theoretic proofs of
Theorems \ref{bilinear-equation-sesb} and \ref{bilinear-old-pinned-sesb}. In fact, we will prove the following result instead of Theorem \ref{bilinear-old-pinned-sesb}.

\begin{theorem} \label{bilinear-pinned-sesb}
  Let $\mathcal{E} \subset \mathbbm{F}^d_q$ ($d \geq 2$) of cardinality
  $|\mathcal{E}| \gg q^{(d + 1) / 2}$. Then there exist a subset $\mathcal{E}'
  \subset \mathcal{E}$ with cardinality $|\mathcal{E}' | = (1 - o (1))
  |\mathcal{E}|$ such that for every $\tmmathbf{y} \in \mathcal{E}'$, one has
  $|B_{\tmmathbf{y}} (\mathcal{E}) | = (1 - o (1)) q$.
\end{theorem}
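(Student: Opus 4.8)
The plan is to construct an $(n,d,\lambda)$-colored graph on $\mathbbm{F}_q^d$ whose colors record the value of the bilinear form $B(\tmmathbf{x},\tmmathbf{y})$, and then invoke the pinned-set information encoded in Theorem \ref{book-sesb} (the $K_{2,t}$ result). Concretely, I would define a graph $\mathcal{B}\mathcal{P}_q$ on vertex set $\mathbbm{F}_q^d$ where two vertices $\tmmathbf{x},\tmmathbf{y}$ are joined by an edge of color $B(\tmmathbf{x},\tmmathbf{y})$. For each fixed $\lambda \in \mathbbm{F}_q$, the subgraph on color $\lambda$ is $d$-regular with $d = (1+o(1)) q^{d-1}$, since for generic $\tmmathbf{x}$ the slice $\{\tmmathbf{y} : B(\tmmathbf{x},\tmmathbf{y}) = \lambda\}$ is an affine hyperplane of size $q^{d-1}$. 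The main preliminary computation is to show that this graph (for $\lambda \ne 0$) is an $(n,d,\lambda_0)$-graph with $n = q^d$, $d \approx q^{d-1}$, and second eigenvalue $\lambda_0 \lesssim q^{d/2}$; this follows from a standard character-sum estimate (a Kloosterman/Gauss-sum bound) for the eigenvalues of the bilinear-form relation, and is presumably established as a lemma earlier in this section or in \cite{vinh-beq}.

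Next I would translate the quantity $|B_{\tmmathbf{y}}(\mathcal{E})|$ into a counting statement about the graph. Fix $\tmmathbf{y} \in \mathcal{E}$; the element $\lambda$ fails to lie in $B_{\tmmathbf{y}}(\mathcal{E})$ exactly when no $\tmmathbf{x} \in \mathcal{E}$ has $B(\tmmathbf{x},\tmmathbf{y}) = \lambda$, i.e. when $\tmmathbf{y}$ has no neighbor of color $\lambda$ inside $\mathcal{E}$. To control the set of \emph{bad} vertices $\tmmathbf{y}$ (those for which many colors are missing), I would count, for each color $\lambda$, the pairs $(\tmmathbf{x},\tmmathbf{y}) \in \mathcal{E} \times \mathcal{E}$ with $B(\tmmathbf{x},\tmmathbf{y})=\lambda$. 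Applying the eigenvalue estimate (or equivalently the expander mixing lemma for the single color $\lambda$), the number of such edges inside $\mathcal{E}$ is $\tfrac{d}{n}|\mathcal{E}|^2 + O(\lambda_0 |\mathcal{E}|) = (1+o(1)) q^{-1}|\mathcal{E}|^2$ once $|\mathcal{E}| \gg q^{(d+1)/2}$, so that $\lambda_0 |\mathcal{E}| = o(q^{-1}|\mathcal{E}|^2)$.

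The heart of the argument is the second-moment / variance step that upgrades this average statement to a pinned one. For each $\tmmathbf{y}$ and color $\lambda$, let $d_\lambda(\tmmathbf{y})$ be the number of $\tmmathbf{x}\in\mathcal{E}$ with $B(\tmmathbf{x},\tmmathbf{y})=\lambda$; the first moment over $\tmmathbf{y}\in\mathcal{E}$ of $d_\lambda(\tmmathbf{y})$ is $(1+o(1))q^{-1}|\mathcal{E}|$, and the key is that the second moment $\sum_{\tmmathbf{y}} d_\lambda(\tmmathbf{y})^2$ — which counts exactly the $K_{2,1}$-type paths, i.e. pairs $\tmmathbf{x},\tmmathbf{x}'$ sharing a common $\lambda$-neighbor $\tmmathbf{y}$ — is controlled by the $K_{2,t}$ count of Theorem \ref{book-sesb} (applied in its colored form, Theorem \ref{book-c-sesb}). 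This gives $\mathrm{Var}_{\tmmathbf{y}}(d_\lambda) = o\big((q^{-1}|\mathcal{E}|)^2\big)$ summed appropriately over $\lambda$, whence by Chebyshev all but $o(|\mathcal{E}|)$ vertices $\tmmathbf{y}$ have $d_\lambda(\tmmathbf{y})>0$ for all but $o(q)$ colors $\lambda$. Setting $\mathcal{E}'$ to be this good set of vertices gives $|\mathcal{E}'|=(1-o(1))|\mathcal{E}|$ and $|B_{\tmmathbf{y}}(\mathcal{E})|=(1-o(1))q$ for each $\tmmathbf{y}\in\mathcal{E}'$.

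I expect the main obstacle to be the variance bookkeeping: one must show that the contribution of pairs $(\tmmathbf{x},\tmmathbf{x}')$ with no valid common neighbor, summed over all colors simultaneously, is genuinely lower order, and that the degenerate directions (the locus where $B(\cdot,\tmmathbf{y})$ is singular, or where $\lambda=0$ behaves differently) contribute only $o(|\mathcal{E}|)$ exceptional $\tmmathbf{y}$. The clean way around this is to phrase the whole count through the colored $K_{2,t}$ theorem rather than by hand, so that the error terms are absorbed into its $(1+o(1))$ factor, and to note that the threshold $|\mathcal{E}|^2 \gg q^{d+1}$ is exactly the hypothesis $|U_1||U_2| \ge \lambda_0^2 (n/d)^{t+1}$ of Theorem \ref{book-sesb} with $\lambda_0 \approx q^{d/2}$, $n/d \approx q$, and $t$ of order $q$.
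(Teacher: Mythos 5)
Your overall architecture is in fact the paper's own: color each pair $(\tmmathbf{x},\tmmathbf{y})$ by $B(\tmmathbf{x},\tmmathbf{y})$, show each nonzero color class is pseudo-random, control the per-color degree $d_\lambda(\tmmathbf{y})$ by a first and a second moment (the second moment being a path/$K_{2,1}$ count), and finish by Cauchy--Schwarz and pigeonhole over the $q-1$ colors; this is exactly Lemma \ref{product-graph-lemma} combined with Lemma \ref{e-pinned-sesb} and the averaging step the paper uses for Theorem \ref{norm-pinned-sesb}. However, there are two concrete errors in your quantitative bookkeeping. First, the eigenvalue bound you posit, $\lambda_0 \lesssim q^{d/2}$, is too weak by a factor of $\sqrt{q}$ and does not prove the theorem at the stated threshold. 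Every error term in your argument is lower order precisely when $|\mathcal{E}| \gg q\lambda_0$: the mixing estimate needs $\lambda_0|\mathcal{E}| = o\left(q^{-1}|\mathcal{E}|^2\right)$, and the path count (Lemma \ref{path-sesb}) needs $\lambda_0 = o\left(q^{-1}|\mathcal{E}|\right)$ together with $\lambda_0^2|\mathcal{E}| = o\left(q^{-2}|\mathcal{E}|^3\right)$ --- all equivalent to $|\mathcal{E}| \gg q\lambda_0$. With $\lambda_0 \approx q^{d/2}$ this forces $|\mathcal{E}| \gg q^{(d+2)/2}$, strictly stronger than the hypothesis $|\mathcal{E}| \gg q^{(d+1)/2}$; your sentence asserting that $\lambda_0|\mathcal{E}| = o\left(q^{-1}|\mathcal{E}|^2\right)$ ``once $|\mathcal{E}| \gg q^{(d+1)/2}$'' is false for that $\lambda_0$. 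What is actually needed, and what the paper proves in Lemma \ref{product-graph-lemma}, is $\lambda_0 \leq \sqrt{2q^{d-1}}$, i.e.\ of order $q^{(d-1)/2}$; notably the paper obtains this not from character sums but from an elementary computation of $A^2$ via common-neighbor counts on the vertex set $\mathbbm{F}_q^d \setminus \{\tmmathbf{0}\}$.

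Second, your closing claim that the relevant hypothesis is that of Theorem \ref{book-sesb} ``with $\lambda_0 \approx q^{d/2}$, $n/d \approx q$, and $t$ of order $q$'' cannot be right: for $t$ of order $q$ the condition $|U_1||U_2| \geq \lambda_0^2(n/d)^{t+1}$ reads $|\mathcal{E}|^2 \gtrsim q^{d+q}$, which no subset of $\mathbbm{F}_q^d$ can satisfy. The number of colors never enters as the parameter $t$ of a $K_{2,t}$. The correct application is per color with $t=1$ --- paths of length two, i.e.\ Lemma \ref{path-sesb}, which is the $t=1$ case of Lemma \ref{book-lemma-sesb} --- giving, for each fixed $\lambda \in \mathbbm{F}_q^{\ast}$, that $(1-o(1))|\mathcal{E}|$ of the vertices $\tmmathbf{y} \in \mathcal{E}$ satisfy $d_\lambda(\tmmathbf{y}) \geq 1$. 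One then sums this over the $q-1$ colors to get $\sum_{\tmmathbf{y} \in \mathcal{E}} |B_{\tmmathbf{y}}(\mathcal{E}) \cap \mathbbm{F}_q^{\ast}| \geq (1-o(1))\,q\,|\mathcal{E}|$ and applies Markov/pigeonhole, exactly as the paper does via Lemma \ref{e-pinned-sesb}. With the corrected eigenvalue bound $\lambda_0 = O(q^{(d-1)/2})$, both steps go through at the threshold $|\mathcal{E}| \gg q^{(d+1)/2}$, and your proof becomes the paper's.
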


Note that the proof of \cite[Theorem 2.14]{chapman} also implies Theorem \ref{bilinear-pinned-sesb} and vice versa. We, however,
relax the condition on $|\mathcal{E}| \geq q^{(d + 1) / 2}$ to $|\mathcal{E}| \gg q^{(d + 1) / 2}$ to simplify our arguments.

In \cite{vinh-beq}, the author studied the solvability of systems of bilinear equations over
finite fields. Following the proof of \cite[Theorem 4.10]{ks}, the author proved the following result.

\begin{theorem}(\cite{vinh-beq})
  Let $\mathcal{E} \subseteq \mathbbm{F}^d_q$, $d \geq 2$. For any
  non-degenerate bilinear form $B (\cdot, \cdot)$ on $\mathbbm{F}_q^d$.
  Consider the systems $\mathcal{L}$ of $l \leq \binom{t}{2}$ bilinear
  equations
  \begin{equation}\label{b-s-sesb}
    B (\tmmathbf{a}_i, \tmmathbf{a}_j) = \lambda_{i j}, \: \tmmathbf{a}_i \in
    \mathcal{A}, i = 1, \ldots, t.
  \end{equation}
  Suppose that $|\mathcal{E}| \gg q^{\frac{d - 1}{2} + t - 1}$ and each
  variable appears in at most $k \leq t - 1$ equations, then the system (\ref{b-s-sesb}) is
  solvable for any $\lambda_{i j} \in \mathbbm{F}_q^{\ast}$, $1 \leq i < j
  \leq t$.
\end{theorem}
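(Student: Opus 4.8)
The plan is to translate the solvability of (\ref{b-s-sesb}) into a colored subgraph-counting statement and then apply Theorem \ref{a-c-sesb}. I would build an edge-colored graph $G$ on the vertex set $V = \mathbbm{F}_q^d$ (so $n = q^d$) by coloring the pair $\{\tmmathbf{x}, \tmmathbf{y}\}$, for $\tmmathbf{x} \neq \tmmathbf{y}$, with the value $B(\tmmathbf{x}, \tmmathbf{y}) \in \mathbbm{F}_q$, and discarding the color $0$; since every $\lambda_{ij}$ is nonzero, only the color classes indexed by $\mathbbm{F}_q^{\ast}$ are relevant. (We may take $B$ symmetric, so that the coloring is well defined on unordered pairs; for a general non-degenerate form one works with the associated oriented version, which leaves the spectral estimates unchanged.) The system $\mathcal{L}$ is then encoded by a fixed edge-colored graph $H$ on $s = t$ vertices and $r = l$ edges, the edge $\{i, j\}$ carrying color $\lambda_{ij}$ exactly when the corresponding equation is present. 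The hypothesis that each variable occurs in at most $k$ equations is precisely the statement that the maximum degree of $H$ satisfies $\Delta \le k \le t - 1$, and a (not necessarily induced) copy of $H$ inside $\mathcal{E}$ is the same thing as a solution $(\tmmathbf{a}_1, \ldots, \tmmathbf{a}_t)$ of (\ref{b-s-sesb}) with distinct entries. Thus it suffices to produce at least one copy of $H$ in $\mathcal{E}$.

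The main obstacle is to check that each color class $G_\lambda = \{\{\tmmathbf{x}, \tmmathbf{y}\} : B(\tmmathbf{x}, \tmmathbf{y}) = \lambda\}$, $\lambda \in \mathbbm{F}_q^{\ast}$, is an $(n, D, \mu)$-graph with $D = (1 + o(1)) q^{d-1}$ and $\mu = O(q^{(d-1)/2})$, so that $G$ is an $(q^d, (1 + o(1)) q^{d-1}, O(q^{(d-1)/2}))$-colored graph. Non-degeneracy of $B$ makes $\tmmathbf{y} \mapsto B(\tmmathbf{x}, \tmmathbf{y})$ a nonzero functional for $\tmmathbf{x} \neq 0$, so each neighborhood is an affine hyperplane and $G_\lambda$ is regular of degree $(1 + o(1)) q^{d-1}$ (the origin and the removed loops perturb this negligibly). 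For the second eigenvalue I would expand the adjacency operator with a nontrivial additive character $\chi$,
\[ \mathbbm{1}[B(\tmmathbf{x}, \tmmathbf{y}) = \lambda] = \frac{1}{q} \sum_{s \in \mathbbm{F}_q} \chi\bigl(s(B(\tmmathbf{x}, \tmmathbf{y}) - \lambda)\bigr), \]
and bound the nontrivial spectrum through the resulting Gauss sums; equivalently, computing $A_\lambda A_\lambda^{\top}$ shows that two independent neighborhoods meet in an affine codimension-two flat of $q^{d-2}$ points, whence every eigenvalue other than the degree is $O(q^{(d-1)/2})$. Carrying this out uniformly in $\lambda$ delivers the claimed parameters.

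Finally I would apply Theorem \ref{a-c-sesb} with $m = |\mathcal{E}|$, $n / D = (1 + o(1)) q$, maximum degree $\Delta \le k$, $s = t$, $r = l$. The size requirement $m \gg \mu\,(n / D)^{\Delta}$ reads $|\mathcal{E}| \gg q^{(d-1)/2} \cdot q^{\Delta} = q^{(d-1)/2 + \Delta}$, which, since $\Delta \le k \le t - 1$, follows from the hypothesis $|\mathcal{E}| \gg q^{(d-1)/2 + t - 1} \ge q^{(d-1)/2 + \Delta}$. Theorem \ref{a-c-sesb} then yields
\[ (1 + o(1))\, \frac{|\mathcal{E}|^{t}}{|\tmop{Aut}(H)|} \Bigl( \frac{D}{n} \Bigr)^{l} = (1 + o(1))\, \frac{|\mathcal{E}|^{t}}{|\tmop{Aut}(H)|}\, q^{-l} \]
copies of $H$ in $\mathcal{E}$, a quantity that is strictly positive. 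Hence (\ref{b-s-sesb}) is solvable for every choice of $\lambda_{ij} \in \mathbbm{F}_q^{\ast}$; multiplying by $|\tmop{Aut}(H)|$ recovers the count $(1 + o(1))\, q^{-l} |\mathcal{E}|^{t}$ of ordered solutions obtained in \cite{vinh-beq}.
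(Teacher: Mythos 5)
Your proposal is correct and takes essentially the same route as the paper: you construct the edge-colored product graph whose color classes are exactly the graphs $B_{q,d}(\lambda)$ of Lemma \ref{product-graph-lemma} (with the same parameters $(q^d-1,\,q^{d-1},\,\sqrt{2q^{d-1}})$ and the same $AA^{\top}$ computation), and then feed it to the multicolor counting result Theorem \ref{a-c-sesb}, which is precisely how \cite{vinh-beq} proceeds ``following the proof of \cite[Theorem 4.10]{ks}.'' The only nit is that you should delete the origin from the vertex set (as the paper does) rather than keep it as an isolated vertex, since an isolated vertex destroys the regularity demanded by the definition of an $(n,d,\lambda)$-graph; this costs at most one point of $\mathcal{E}$ and affects nothing.
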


As a simple consequence of Theorem \ref{colored-subgraph-sesb} and the construction of product graph in Section \ref{section:bilinear}, we show that under a weaker condition,
say $|\mathcal{A}| \gg q^{(n + t - 1) / 2}$, the system (\ref{b-s-sesb}) is solvable
for almost all possible choices of parameters $\lambda_{i j} \in
\mathbbm{F}^{\ast}$.

\begin{theorem}\label{bilinear-system-sesb}
  Let $\mathcal{E} \subseteq \mathbbm{F}^d_q$ ($d \geq 2$) of cardinality
  $|\mathcal{E}| \gg q^{(n + t - 1) / 2}$. For any non-degenerate bilinear
  form $B (\cdot, \cdot)$ on $\mathbbm{F}_q^d$, consider the systems
  $\mathcal{L}$ of $\binom{t}{2}$ bilinear equations
  \begin{equation}
    B (\tmmathbf{a}_i, \tmmathbf{a}_j) = \lambda_{i j}, \: \tmmathbf{a}_i \in
    \mathcal{A}, i = 1, \ldots, t.
  \end{equation}
  Then the above system is solvable for $(1 - o (1))
  q^{\binom{t}{2}}$ possible choices of $\lambda_{i j} \in \mathbbm{F}$, $1 \leq i < j
  \leq t$.
\end{theorem}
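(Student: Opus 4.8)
The plan is to reduce Theorem~\ref{bilinear-system-sesb} to the colored-subgraph result, Theorem~\ref{colored-subgraph-sesb}, by constructing an appropriate $(n, d, \lambda)$-colored graph from the set $\mathcal{E}$ and the bilinear form $B$. First I would define the \emph{product graph} $P_{\mathcal{E}}$ on vertex set $\mathcal{E}$, coloring the edge between $\tmmathbf{a}$ and $\tmmathbf{b}$ by the value $B(\tmmathbf{a}, \tmmathbf{b}) \in \mathbbm{F}_q$. To fit the framework, I would realize each color class as (a subgraph of) a natural $(N, D, \lambda)$-graph defined on all of $\mathbbm{F}_q^d$: for each $\lambda_0 \in \mathbbm{F}_q^{\ast}$, the graph whose edges are the pairs $(\tmmathbf{x}, \tmmathbf{y})$ with $B(\tmmathbf{x}, \tmmathbf{y}) = \lambda_0$. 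The crucial input — which I expect to be the main technical obstacle — is the eigenvalue computation showing that for each fixed nonzero $\lambda_0$ this graph is $(q^d, D, \lambda)$-regular with $D = (1 + o(1)) q^{d-1}$ and $\lambda \lesssim q^{(d-1)/2}$. This is a standard but essential character-sum / Kloosterman-type estimate; I would verify it by writing the adjacency operator in terms of additive characters of $\mathbbm{F}_q$ and bounding the resulting Gauss sums, so that all color classes have the same degree up to lower-order terms and a uniform spectral gap, making $P_{\mathcal{E}}$ an $(n, d, \lambda)$-colored graph in the sense defined in Section~\ref{graph-result-sesb}.

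With the spectral data in hand, the second step is to match the parameters to the hypotheses of Theorem~\ref{colored-subgraph-sesb}. Here $n = q^d$, $d_{\mathrm{graph}} = (1+o(1)) q^{d-1}$, and $\lambda \lesssim q^{(d-1)/2}$, so $n / d_{\mathrm{graph}} = (1+o(1)) q$ and the number of available colors is $|\mathcal{C}| = q - 1 = (1 - o(1)) n / d_{\mathrm{graph}}$, exactly as the theorem requires. The size condition $m \gg \lambda (n/d_{\mathrm{graph}})^{t/2}$ becomes $|\mathcal{E}| \gg q^{(d-1)/2} \cdot q^{t/2} = q^{(d + t - 1)/2}$, which is precisely the hypothesis $|\mathcal{E}| \gg q^{(n+t-1)/2}$ in the statement (with the ambient dimension playing the role of $n$ in the exponent). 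I would then simply take $U = \mathcal{E}$ and apply Theorem~\ref{colored-subgraph-sesb} with $t$ vertices.

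The conclusion of Theorem~\ref{colored-subgraph-sesb} says that the induced subgraph of $P_{\mathcal{E}}$ on $\mathcal{E}$ contains at least $(1 - o(1)) |\mathcal{C}|^{\binom{t}{2}}$ distinct colorings of $K_t$. Translating back through the coloring, a $K_t$ with edge colors $(\lambda_{ij})_{1 \le i < j \le t}$ present in $P_{\mathcal{E}}$ is exactly a solution $(\tmmathbf{a}_1, \ldots, \tmmathbf{a}_t) \in \mathcal{E}^t$ to the system $B(\tmmathbf{a}_i, \tmmathbf{a}_j) = \lambda_{ij}$. Since $|\mathcal{C}|^{\binom{t}{2}} = (q-1)^{\binom{t}{2}} = (1 - o(1)) q^{\binom{t}{2}}$, the number of realized tuples $(\lambda_{ij})$ is at least $(1 - o(1)) q^{\binom{t}{2}}$, giving the claim. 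The only remaining subtlety is the treatment of the color $0$ (i.e. $\lambda_{ij} = 0$): the zero color class has a different structure and is not part of the $(n, d, \lambda)$-colored framework, but since there are only $O(q^{\binom{t}{2} - 1})$ colorings of $K_t$ using at least one zero edge, these form a negligible $o(q^{\binom{t}{2}})$ fraction and do not affect the count. Thus the bulk of the work is the eigenvalue bound for the bilinear-form graph; once that is established, the result follows directly from the colored-subgraph theorem.
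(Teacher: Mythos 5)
Your proposal has the same architecture as the paper's proof: color the pairs of vectors by the value of $B$, observe that the resulting structure is an $(n,d,\lambda)$-colored graph with $|\mathcal{C}| = q-1 = (1-o(1))\,n/d_{\mathrm{graph}}$ colors, apply Theorem \ref{colored-subgraph-sesb} with $U = \mathcal{E}$, and note that $(q-1)^{\binom{t}{2}} = (1-o(1))\,q^{\binom{t}{2}}$ so the missing zero colors are harmless. Your parameter matching is exactly the paper's. The one point of divergence is how the spectral input is established, and here your plan has a genuine pitfall. The paper's Lemma \ref{product-graph-lemma} uses no character sums at all: it counts common neighbors (two linearly independent vertices have exactly $q^{d-2}$ common neighbors, proportional ones have none), which yields an exact identity of the form $A^2 = q^{d-2}J + (q^{d-1}-q^{d-2})I - E$ with $E$ supported on the proportionality graph, whence every nontrivial eigenvalue satisfies $\theta^2 \leq q^{d-1}-q^{d-2}+q-1 < 2q^{d-1}$, i.e.\ $\lambda \leq \sqrt{2q^{d-1}}$.

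By contrast, the character-sum route you sketch, if executed naively, loses the theorem. Writing the adjacency operator as $A = \tfrac{1}{q}J + \tfrac{1}{q}\sum_{s \neq 0} \psi(-s\lambda_0) M_s$ with $M_s(\tmmathbf{x},\tmmathbf{y}) = \psi(sB(\tmmathbf{x},\tmmathbf{y}))$, each $M_s$ has operator norm exactly $q^{d/2}$ (since $M_sM_s^{\ast} = q^d I$ by non-degeneracy of $B$), so the triangle inequality over the $q-1$ nontrivial frequencies only gives $\lambda \lesssim q^{d/2}$. That bound would force the hypothesis $|\mathcal{E}| \gg q^{(d+t)/2}$ rather than the claimed $q^{(d+t-1)/2}$. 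To recover the extra factor of $q^{1/2}$ you must expand the full product $\bigl(\sum_{s}\psi(-s\lambda_0)M_s\bigr)\bigl(\sum_{s'}\psi(-s'\lambda_0)M_{s'}\bigr)^{\ast}$ and use orthogonality: the surviving terms are indexed by pairs with $s\tmmathbf{x} = s'\tmmathbf{z}$, i.e.\ by proportional vertex pairs — at which point you are redoing precisely the common-neighbor count that the paper performs without characters. So either carry out that cancellation explicitly, or adopt the paper's elementary argument; merely "bounding the resulting Gauss sums" is not enough. Two minor points: exclude $\tmmathbf{0}$ from the vertex set (it is isolated in every color, so your color classes on all of $\mathbbm{F}_q^d$ are not regular; the paper works on $\mathbbm{F}_q^d \setminus \{(0,\ldots,0)\}$ for this reason), and note that the degree of each color class is exactly $q^{d-1}$, not merely $(1+o(1))q^{d-1}$.
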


\subsection{Sum-product equations} \label{sp-result-sesb}

In \cite{sarkozy}, S\'ark\"ozy proved that if $\mathcal{A}, \mathcal{B}, \mathcal{C},
\mathcal{D}$ are ``large'' subsets of $\mathbbm{Z}_p$, more precisely,
$|\mathcal{A}| |\mathcal{B}| |\mathcal{C}| |\mathcal{D}| \gg p^3$, then the
sum-product equation
\begin{equation}\label{sp-r} a + b = c d \end{equation}
can be solved with $a \in \mathcal{A}, b \in \mathcal{B}, c \in \mathcal{C}$
and $d \in \mathcal{D}$. Gyarmati and S\'ark\"ozy \cite{gs} generalized this result on
the solvability of equation (\ref{sp-r}) to finite fields. They also studied the
solvability of other (higher degree) algebraic equations with solutions
restricted to ``large'' subsets of $\mathbbm{F}_q$. Using bounds of
multiplicative character sums, Shparlinski \cite{shparlinski} extended the class of sets which
satisfy this property. Furthermore, Garaev \cite{garaev2} considered the equation (\ref{sp-r}) over
special sets $\mathcal{A}, \mathcal{B}, \mathcal{C}, \mathcal{D}$ to obtain
some results on the sum-product problem in finite fields. More precisely, he
proved the following theorem.

\begin{theorem}(\cite{garaev2}) \label{garaev-theorem}
  For any $\mathcal{A} \subseteq \mathbbm{F}_q$, we have
  \[ |\mathcal{A}|^3 \leq \frac{|\mathcal{A} \cdot \mathcal{A}|
     |\mathcal{A}|^2 |\mathcal{A}+\mathcal{A}|}{q} + \sqrt{q|\mathcal{A} \cdot
     \mathcal{A}| |\mathcal{A}|^2 |\mathcal{A}+\mathcal{A}|}, \]
  which implies that
  \[ |\mathcal{A}+\mathcal{A}| |\mathcal{A} \cdot \mathcal{A}| \gg \min
     \left\{ q|\mathcal{A}|, \frac{|\mathcal{A}|^4}{q} \right\} . \]
\end{theorem}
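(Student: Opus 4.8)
The plan is to realize Garaev's inequality as a single application of the expander mixing lemma to a suitable ``sum--product'' graph, in exactly the same spirit as the other theorems of this paper are deduced from the spectral gap of an appropriate $(n, d, \lambda)$-graph. Concretely, I would work on the bipartite graph $\mathcal{G}$ whose two vertex classes are both copies of $\mathbbm{F}_q \times \mathbbm{F}_q$, joining a left vertex $(s, a)$ to a right vertex $(p, b)$ precisely when
\[ as = ab + p . \]
A direct count shows that $\mathcal{G}$ is essentially $q$-regular on each side: fixing $(s, a)$, every $b \in \mathbbm{F}_q$ determines $p = a(s - b)$ uniquely, giving $q$ neighbours; fixing $(p, b)$, every nonzero $a$ determines $s = b + p/a$ uniquely. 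Hence each class has size $q^2$ and the degree is $q + O(1)$, so the bipartite density is $d / q^2 = (1 + o(1))/q$.

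The main obstacle is the spectral estimate: I must show that the nontrivial singular values of the biadjacency operator of $\mathcal{G}$ are at most $\sqrt{q}$. Since the defining relation $as - ab - p = 0$ is bilinear, testing the operator against additive characters makes the inner sums over $s$ and $p$ collapse (each forces a linear condition, e.g. $\sum_s \psi((\alpha + \beta a) s)$ vanishes unless $\alpha + \beta a = 0$), and the surviving sums reduce to Gauss sums of modulus $\sqrt{q}$. This is the only place where character sums enter, and it is exactly where the factor $\sqrt{q}$ in Garaev's bound originates. Granting this estimate, $\mathcal{G}$ behaves as a $(q^2, q, \sqrt{q})$ bipartite graph, and the expander mixing lemma (a standard property of such graphs; see \cite{as,ks}) yields, for every $S$ in the left class and $T$ in the right class,
\[ e(S, T) \le \frac{|S| \, |T|}{q} + \sqrt{q} \, \sqrt{|S| \, |T|} . \]

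It remains to choose $S$ and $T$ so that the left-hand side counts at least $|\mathcal{A}|^3$ while the sizes produce the claimed bound. I take $S = (\mathcal{A} + \mathcal{A}) \times \mathcal{A}$ and $T = (\mathcal{A} \cdot \mathcal{A}) \times \mathcal{A}$, so that $|S| \, |T| = |\mathcal{A} + \mathcal{A}| \, |\mathcal{A} \cdot \mathcal{A}| \, |\mathcal{A}|^2$. Every triple $(a_1, a_2, a_3) \in \mathcal{A}^3$ produces an edge between $(a_1 + a_2, a_3) \in S$ and $(a_1 a_3, a_2) \in T$, because $a_3(a_1 + a_2) = a_3 a_2 + a_1 a_3$ is precisely the defining relation with $s = a_1 + a_2$, $a = a_3$, $b = a_2$, $p = a_1 a_3$. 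The map is injective, since from an edge one recovers $a_3$ and $a_2$ as coordinates and then $a_1 = s - a_2$; hence $e(S, T) \ge |\mathcal{A}|^3$. Substituting into the mixing bound gives the first displayed inequality of the theorem. The stated consequence then follows by elementary manipulation: dividing by $|\mathcal{A}|$ gives
\[ |\mathcal{A}|^2 \le \frac{|\mathcal{A}| \, |\mathcal{A} + \mathcal{A}| \, |\mathcal{A} \cdot \mathcal{A}|}{q} + \sqrt{q \, |\mathcal{A} + \mathcal{A}| \, |\mathcal{A} \cdot \mathcal{A}|} , \]
and comparing the two terms on the right, whichever dominates yields either $|\mathcal{A} + \mathcal{A}| \, |\mathcal{A} \cdot \mathcal{A}| \gg q |\mathcal{A}|$ or $|\mathcal{A} + \mathcal{A}| \, |\mathcal{A} \cdot \mathcal{A}| \gg |\mathcal{A}|^4 / q$, which is the claimed $\min$ bound.
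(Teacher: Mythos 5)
You should know at the outset that the paper never proves Theorem \ref{garaev-theorem} itself: it quotes it from \cite{garaev2}, points to \cite{vinh-incidence} and \cite{solymosi} for spectral proofs, and instead proves the extension Theorem \ref{spe-sesb} via the sum--product graph of Lemma \ref{sp-graph-lemma}. Your strategy is exactly that one, but your execution has a genuine gap: the claim that the nontrivial singular values of your graph $\mathcal{G}$ are at most $\sqrt{q}$ is false for $\mathcal{G}$ as defined. The culprit is the degenerate value $a=0$: every left vertex $(s,0)$ is adjacent to exactly the set $\{(0,b): b\in\mathbbm{F}_q\}$, independently of $s$, so $\mathcal{G}$ contains a complete bipartite $K_{q,q}$ between the left vertices with $a=0$ and the right vertices with $p=0$ (and those right vertices have degree $2q-1$, not $q+O(1)$). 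Taking $S=\{(s,0)\}$ and $T=\{(0,b)\}$ in your mixing inequality would give $q^2 = e(S,T) \leq q + q^{3/2}$, a contradiction for large $q$; equivalently, $MM^T$ restricted to the invariant span of $\mathbf{1}$ and $\mathbbm{1}_{\{a=0\}}$ has the two eigenvalues $\tfrac{1}{2}\bigl(2q^2-q \pm q\sqrt{4q-3}\bigr)$, so $\sigma_2 \geq q(1-o(1))$, not $\sqrt{q}$. With $\sigma_2 \asymp q$ the mixing lemma only yields the error term $q\sqrt{|S||T|}$, which is weaker than Garaev's by exactly the factor $\sqrt{q}$ at stake, and delivers only $|\mathcal{A}+\mathcal{A}||\mathcal{A}\cdot\mathcal{A}| \gg \min(q|\mathcal{A}|, |\mathcal{A}|^4/q^2)$. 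Your character-sum justification also does not work as described: for $f(p,b)=\psi(\gamma p+\delta b)$ one finds $(Mf)(s,a) = q\,\psi(\gamma a s)\,\mathbbm{1}[\gamma a = \delta]$, so the inner sums collapse to linear conditions and no Gauss sums appear; additive characters are not singular vectors of this operator.

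The gap is repairable, and the rest of your argument is sound. Delete the degenerate vertices by taking the left part to be $\mathbbm{F}_q\times\mathbbm{F}_q^{*}$, and assume (as one may, at the cost of harmless $O(1)$ changes) that $0\notin\mathcal{A}$, so your injection of $\mathcal{A}^3$ into $E(S,T)$ still lands in the graph. Then all left degrees equal $q$, all right degrees equal $q-1$, and the common-neighbour count gives $M^TM = (J_q-I_q)\otimes(J_q-I_q) + (q-1)I$, whose eigenvalues are $q(q-1)$, $q$ and $0$; hence $\sigma_2=\sqrt{q}$ exactly, the mixing bound $e(S,T)\leq |S||T|/q + \sqrt{q|S||T|}$ holds, and your counting and injectivity step then yields Garaev's inequality verbatim, with the $\min$ bound following as you say. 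Note that this common-neighbour computation --- not character sums --- is precisely how the paper bounds its spectra (Lemmas \ref{product-graph-lemma} and \ref{sp-graph-lemma}), and that the paper's relation $a+c+\lambda = B(\tmmathbf{b},\tmmathbf{d})$ is immune to your degeneracy, since there the vertices $(a,\tmmathbf{0})$ have pairwise disjoint neighbourhoods.
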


When one of sum or product sets is small, we have an immediate corollary.

\begin{corollary}\label{cor-garaev}
  Suppose that $\mathcal{A} \subset \mathbbm{F}_q$ and $\min
  (|\mathcal{A}+\mathcal{A}|, |\mathcal{A} \cdot \mathcal{A}|) \leq
  C|\mathcal{A}|$ for an absolute constant $C > 0$.
  \begin{enumerate}
    \item If $|\mathcal{A}| \gg q^{2 / 3}$ then $\max
    (|\mathcal{A}+\mathcal{A}|, |\mathcal{A} \cdot \mathcal{A}|) \gg q$.
    
    \item If $|\mathcal{A}| \ll q^{2 / 3}$ then $\max
    (|\mathcal{A}+\mathcal{A}|, |\mathcal{A} \cdot \mathcal{A}|) \gg
    |\mathcal{A}|^3 / q$.
  \end{enumerate}
\end{corollary}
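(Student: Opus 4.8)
The plan is to derive both parts directly from the second inequality in Theorem \ref{garaev-theorem}, namely
\[
  |\mathcal{A}+\mathcal{A}|\,|\mathcal{A}\cdot\mathcal{A}|
  \gg \min\left\{\, q|\mathcal{A}|,\ \frac{|\mathcal{A}|^4}{q}\,\right\},
\]
combined with the hypothesis that the smaller of the two sets $\mathcal{A}+\mathcal{A}$ and $\mathcal{A}\cdot\mathcal{A}$ has size at most $C|\mathcal{A}|$. The key observation is that under this hypothesis the left-hand side factors cleanly: since $\min(|\mathcal{A}+\mathcal{A}|,|\mathcal{A}\cdot\mathcal{A}|)\le C|\mathcal{A}|$, we have
\[
  |\mathcal{A}+\mathcal{A}|\,|\mathcal{A}\cdot\mathcal{A}|
  = \min(\cdots)\cdot\max(\cdots)
  \le C|\mathcal{A}|\,\max\bigl(|\mathcal{A}+\mathcal{A}|,|\mathcal{A}\cdot\mathcal{A}|\bigr).
\]
Substituting this into the theorem and dividing through by $C|\mathcal{A}|$ gives
\[
  \max\bigl(|\mathcal{A}+\mathcal{A}|,|\mathcal{A}\cdot\mathcal{A}|\bigr)
  \gg \frac{1}{|\mathcal{A}|}\min\left\{\, q|\mathcal{A}|,\ \frac{|\mathcal{A}|^4}{q}\,\right\}
  = \min\left\{\, q,\ \frac{|\mathcal{A}|^3}{q}\,\right\}.
\]
This single inequality is essentially the whole content of the corollary.

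The two cases then correspond exactly to which term in the inner minimum is the smaller one. First I would observe that $q \le |\mathcal{A}|^3/q$ is equivalent to $|\mathcal{A}| \ge q^{2/3}$ (up to the implied constants absorbed into the $\gg$ notation). Thus in Part (1), when $|\mathcal{A}|\gg q^{2/3}$, the minimum is achieved by $q$, giving $\max(|\mathcal{A}+\mathcal{A}|,|\mathcal{A}\cdot\mathcal{A}|)\gg q$; and in Part (2), when $|\mathcal{A}|\ll q^{2/3}$, the minimum is achieved by $|\mathcal{A}|^3/q$, giving $\max(|\mathcal{A}+\mathcal{A}|,|\mathcal{A}\cdot\mathcal{A}|)\gg |\mathcal{A}|^3/q$. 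Each case is just reading off the appropriate branch of the minimum.

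I do not expect any genuine obstacle here, since the corollary is a purely formal consequence of Theorem \ref{garaev-theorem}; the only point requiring mild care is tracking how the absolute constant $C$ and the asymptotic thresholds interact with the $\gg$ and $\ll$ conventions, so that the crossover at $|\mathcal{A}|\asymp q^{2/3}$ is handled consistently in both directions. In particular one should note that the constant $C$ from the hypothesis is absorbed harmlessly into the implied constant of the conclusion.
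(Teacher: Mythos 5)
Your proposal is correct and is exactly the argument the paper intends: the corollary is stated as an immediate consequence of Theorem \ref{garaev-theorem}, and your derivation (bounding the product $|\mathcal{A}+\mathcal{A}||\mathcal{A}\cdot\mathcal{A}|$ by $C|\mathcal{A}|\max(|\mathcal{A}+\mathcal{A}|,|\mathcal{A}\cdot\mathcal{A}|)$, dividing, and reading off the branch of $\min\{q, |\mathcal{A}|^3/q\}$ determined by the crossover at $|\mathcal{A}|\asymp q^{2/3}$) is precisely the intended one. No gaps; the handling of the constant $C$ and the asymptotic conventions is fine.
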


In \cite{vinh-incidence}, the author reproved Theorem \ref{garaev-theorem} using standard tools from spectral graph
theory. Solymosi gave a similar proof in \cite{solymosi}. We will use the same idea to
study the solvability of sum-bilinear equation
\[ a + c = B (\tmmathbf{b}, \tmmathbf{d}), \]
where $a, b \in \mathbbm{F}_q$, $\tmmathbf{b}, \tmmathbf{d} \in
\mathbbm{F}_q^d$, and $B (\cdot, \cdot)$ is any non-degenerate bilinear form.
More precisely, we have the following result.

\begin{theorem} \label{sp-equation-sesb}
  Let $\mathcal{E}, \mathcal{F} \subset \mathbbm{F}_q \times \mathbbm{F}_q^d$
  and $B (\cdot, \cdot)$ be any non-degenerate bilinear form on $\mathbbm{F}_q^d$. Suppose that
  $|\mathcal{E}| |\mathcal{F}| \geq 2 q^{d + 2}$ then the equation
  \[ a + c + \lambda = B (\tmmathbf{b}, \tmmathbf{d}), \: \text{$(a,
     \tmmathbf{b}) \in \mathcal{E}$, $(c, \tmmathbf{d}) \in \mathcal{F}$} \]
  is solvable for any $\lambda \in \mathbbm{F}_q$. 
\end{theorem}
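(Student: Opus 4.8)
The plan is to build a bipartite ``sum--bilinear'' graph $SB_\lambda$ and read off the result from the standard expander mixing lemma, exactly in the spirit of the spectral reproof of Theorem \ref{garaev-theorem}. I would take both vertex classes to be $V = \mathbbm{F}_q \times \mathbbm{F}_q^d$, so that each class has $q^{d+1}$ vertices, and join $(a, \tmmathbf{b})$ in the first class to $(c, \tmmathbf{d})$ in the second precisely when $a + c + \lambda = B(\tmmathbf{b}, \tmmathbf{d})$. For a fixed $(a, \tmmathbf{b})$ and each of the $q^d$ choices of $\tmmathbf{d}$ the value $c = B(\tmmathbf{b}, \tmmathbf{d}) - a - \lambda$ is forced, and symmetrically for fixed $(c,\tmmathbf{d})$; hence $SB_\lambda$ is $q^d$-regular on each side. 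The point of the construction is that the number of solutions of our equation with $(a,\tmmathbf{b}) \in \mathcal{E}$ and $(c,\tmmathbf{d}) \in \mathcal{F}$ is exactly the number $e(\mathcal{E},\mathcal{F})$ of edges of $SB_\lambda$ between $\mathcal{E}$ and $\mathcal{F}$.

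The heart of the argument is the spectral gap. Letting $M$ be the bipartite adjacency matrix, I would compute $M M^T$ directly: subtracting the two defining equations shows that the $\bigl((a,\tmmathbf{b}),(a',\tmmathbf{b}')\bigr)$ entry counts the $\tmmathbf{d}$ with $B(\tmmathbf{b}-\tmmathbf{b}',\tmmathbf{d}) = a - a'$, with $\lambda$ and $c$ dropping out. Non-degeneracy of $B$ is what makes this clean: when $\tmmathbf{b}\neq\tmmathbf{b}'$ the functional $B(\tmmathbf{b}-\tmmathbf{b}',\cdot)$ is nonzero, so this count is $q^{d-1}$, while for $\tmmathbf{b}=\tmmathbf{b}'$ it is $q^d$ or $0$ according as $a=a'$ or not. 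This gives $M M^T = q^{d-1} J + q^d I - q^{d-1} P$, where $J$ is all-ones and $P = I_{q^d}\otimes J_q$ is the block matrix that is constant on each $\tmmathbf{b}$-block. Diagonalizing this is routine: the all-ones vector yields the trivial singular value $q^d$, the block-constant mean-zero vectors yield $0$, and the block-mean-zero vectors yield $q^d$, so every nontrivial singular value of $M$ equals $q^{d/2}$. Thus $SB_\lambda$ is an $(q^{d+1}, q^d, q^{d/2})$-graph for every $\lambda$.

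With $\lambda_{\mathrm{eig}} = q^{d/2}$ in hand, the conclusion follows from the expander mixing lemma (see \cite{ks}):
\[ \left| e(\mathcal{E},\mathcal{F}) - \frac{q^d}{q^{d+1}}\, |\mathcal{E}||\mathcal{F}| \right| \leq q^{d/2}\sqrt{|\mathcal{E}||\mathcal{F}|}, \]
so that $e(\mathcal{E},\mathcal{F}) \geq q^{-1}|\mathcal{E}||\mathcal{F}| - q^{d/2}\sqrt{|\mathcal{E}||\mathcal{F}|} = \sqrt{|\mathcal{E}||\mathcal{F}|}\,\bigl(q^{-1}\sqrt{|\mathcal{E}||\mathcal{F}|} - q^{d/2}\bigr)$. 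The hypothesis $|\mathcal{E}||\mathcal{F}| \geq 2q^{d+2}$ gives $q^{-1}\sqrt{|\mathcal{E}||\mathcal{F}|} \geq \sqrt{2}\,q^{d/2} > q^{d/2}$, so the right-hand side is strictly positive and the equation is solvable for every $\lambda \in \mathbbm{F}_q$.

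I expect the only genuine work to be the eigenvalue computation in the middle step: verifying that non-degeneracy of $B$ forces the uniform count $q^{d-1}$ off the block diagonal, and then correctly identifying the three eigenspaces of $q^{d-1}J + q^d I - q^{d-1}P$. Once $\lambda_{\mathrm{eig}} = q^{d/2}$ is pinned down, the mixing step and the final inequality are entirely routine, and the stated factor $2$ in the hypothesis comfortably supplies the needed margin.
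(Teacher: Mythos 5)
Your proposal is correct, and it rests on the same core idea as the paper --- encode solutions of the equation as edges of a ``sum--bilinear'' graph and finish with an expander mixing inequality --- but the technical route is genuinely different. The paper (Lemma \ref{sp-graph-lemma}) works with the \emph{non-bipartite} graph $SB_{q,d}(\lambda)$ on $\mathbbm{F}_q\times\mathbbm{F}_q^d$, derives $A^2 = q^{d-1}J + (q^d-q^{d-1})I - E$ where $E$ is the adjacency matrix of an auxiliary $(q-1)$-regular graph, and must then argue connectivity and non-bipartiteness (via an explicit triangle) before concluding that every nontrivial eigenvalue satisfies $\theta^2 < 2q^d$; Theorem \ref{sp-equation-sesb} then follows from Corollary \ref{edge-c-sesb}. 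You instead pass to the bipartite formulation, compute $MM^T = q^{d-1}J + q^dI - q^{d-1}P$ exactly, and diagonalize it exactly. This buys you three things: (i) no connectivity or bipartiteness checks are needed at all, since singular values replace eigenvalues; (ii) you get the sharper bound $q^{d/2}$ in place of the paper's $\sqrt{2}\,q^{d/2}$, so solvability in fact already holds once $|\mathcal{E}||\mathcal{F}| > q^{d+2}$ and the factor $2$ in the hypothesis is pure slack; and (iii) your argument is valid verbatim for a non-symmetric bilinear form $B$, whereas the paper's undirected construction tacitly needs $B(\tmmathbf{b},\tmmathbf{d})=B(\tmmathbf{d},\tmmathbf{b})$ for its edge relation to be symmetric (otherwise one must invoke the directed variant mentioned at the end of Section \ref{resutls-sesb}). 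What the paper's formulation buys in exchange is a genuine $(n,d,\lambda)$-graph, which feeds directly into the colored-graph machinery (Theorems \ref{a-c-sesb} and \ref{colored-subgraph-sesb}) required for the companion results, Theorems \ref{spe-sesb} and \ref{sp-system-sesb}, where all points lie in a single set and one must count cliques rather than bipartite edges. One small slip in wording: having correctly noted that the block-constant mean-zero vectors give singular value $0$, you then say ``every nontrivial singular value of $M$ equals $q^{d/2}$''; what you mean (and what the mixing step uses) is that every nontrivial singular value is at most $q^{d/2}$.
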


As an easy corollary of Theorem \ref{sp-equation-sesb}, we have the following sum-product estimate,
which can be viewed as an extension of Theorem \ref{garaev-theorem}.

\begin{theorem}\label{spe-sesb}
  For any $\mathcal{A} \subseteq \mathbbm{F}_q$, let $\mathcal{A} \cdot
  \mathcal{A}=\{a a' : a, a' \in \mathcal{A}\}$ and $d\mathcal{A}=\{a_1 +
  \ldots + a_d : a_1, \ldots, a_d \in \mathcal{A}\}$. We have
  \[ |\mathcal{A}|^{2 d - 1} \leq \frac{|\mathcal{A}|^d |\mathcal{A} \cdot
     \mathcal{A}|^{d - 1} |d\mathcal{A}|}{q} + \sqrt{q^d |\mathcal{A}|^d
     |\mathcal{A} \cdot \mathcal{A}|^{d - 1} |d\mathcal{A}|}, \]
  which implies that
  \[ |\mathcal{A} \cdot \mathcal{A}|^{d - 1} |d\mathcal{A}| \gg \min \left(
     q|\mathcal{A}|^{d - 1}, \frac{|\mathcal{A}|^{3 d - 2}}{q^{d - 1}} \right)
     . \]
\end{theorem}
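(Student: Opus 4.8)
The plan is to read the estimate off the expander mixing lemma for the same product graph that proves Theorem~\ref{sp-equation-sesb}, after feeding it a generalized Elekes point--line configuration. Take $B$ to be the standard dot product on $\mathbbm{F}_q^d$ (any non-degenerate form is equivalent to it after a linear change of coordinates) and let $G$ be the graph on $\mathbbm{F}_q \times \mathbbm{F}_q^d$ in which $(u, \tmmathbf{v})$ and $(u', \tmmathbf{v}')$ are adjacent iff $u + u' = \tmmathbf{v} \cdot \tmmathbf{v}'$. Then $G$ is $q^d$-regular on $n = q^{d+1}$ vertices; a one-line count of common neighbours (two distinct vertices have $q^{d-1}$ common neighbours when their $\mathbbm{F}_q^d$-parts differ and none otherwise) shows that every nontrivial eigenvalue of $G$ equals $\pm q^{d/2}$. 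The expander mixing lemma therefore gives, for all $\mathcal{U}, \mathcal{W} \subset \mathbbm{F}_q \times \mathbbm{F}_q^d$ (with $e(\mathcal{U},\mathcal{W})$ the number of ordered adjacent pairs with first entry in $\mathcal{U}$ and second in $\mathcal{W}$),
\[ e (\mathcal{U}, \mathcal{W}) \leq \frac{|\mathcal{U}| |\mathcal{W}|}{q} + q^{d / 2} \sqrt{|\mathcal{U}| |\mathcal{W}|} . \]

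Next I would choose the two sets so that $|\mathcal{U}| |\mathcal{W}|$ is controlled by the product and sum sets while a large family of edges is exhibited by hand. Set
\[ \mathcal{U} = \{ 0 \} \times \big[ (\mathcal{A} \cdot \mathcal{A})^{d - 1} \times (d \mathcal{A}) \big], \qquad \mathcal{W} = \big\{ (b, - a_1^{- 1}, \ldots, - a_{d - 1}^{- 1}, 1) : b \in \mathcal{A}, \ a_i \in \mathcal{A} \setminus \{ 0 \} \big\}, \]
so that $|\mathcal{U}| = |\mathcal{A} \cdot \mathcal{A}|^{d - 1} |d \mathcal{A}|$ and $|\mathcal{W}| \leq |\mathcal{A}|^d$, whence $|\mathcal{U}| |\mathcal{W}| \leq |\mathcal{A}|^d |\mathcal{A} \cdot \mathcal{A}|^{d - 1} |d \mathcal{A}|$. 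The edges come from the Elekes-type family indexed by $a_1, \ldots, a_{d - 1} \in \mathcal{A} \setminus \{ 0 \}$, $b \in \mathcal{A}$ and $c_1, \ldots, c_{d - 1} \in \mathcal{A}$: the point
\[ \tmmathbf{P} = \big( a_1 c_1, \ldots, a_{d - 1} c_{d - 1}, \ b + c_1 + \cdots + c_{d - 1} \big) \in (\mathcal{A} \cdot \mathcal{A})^{d - 1} \times (d \mathcal{A}) \]
satisfies $\tmmathbf{P} \cdot (- a_1^{- 1}, \ldots, - a_{d - 1}^{- 1}, 1) = b$, so $(0, \tmmathbf{P}) \in \mathcal{U}$ is adjacent to $(b, - a_1^{-1}, \ldots, - a_{d-1}^{-1}, 1) \in \mathcal{W}$.

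The assignment $(a_i, b, c_i) \mapsto \big( (0, \tmmathbf{P}), (b, -a_i^{-1}, 1) \big)$ is injective: the $\mathcal{W}$-vertex returns the $a_i$ and $b$, and then $c_i = P_i / a_i$. Hence $e (\mathcal{U}, \mathcal{W}) \geq (1 - o (1)) |\mathcal{A}|^{2 d - 1}$, and combining this with the mixing bound yields
\[ (1 - o (1)) |\mathcal{A}|^{2 d - 1} \leq \frac{|\mathcal{A}|^d |\mathcal{A} \cdot \mathcal{A}|^{d - 1} |d \mathcal{A}|}{q} + \sqrt{q^d |\mathcal{A}|^d |\mathcal{A} \cdot \mathcal{A}|^{d - 1} |d \mathcal{A}|} , \]
which is the first displayed inequality. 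The lower bound on $|\mathcal{A} \cdot \mathcal{A}|^{d - 1} |d \mathcal{A}|$ then follows by the routine dichotomy: if the first term on the right dominates one gets $|\mathcal{A} \cdot \mathcal{A}|^{d - 1} |d \mathcal{A}| \gg q |\mathcal{A}|^{d - 1}$, and if the second term dominates, squaring and rearranging produces the other branch of the minimum.

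The one genuinely creative step is the choice of $\mathcal{U}, \mathcal{W}$ together with the generalized Elekes family: one must place the $d - 1$ multiplicative coordinates (honest products $a_i c_i \in \mathcal{A} \cdot \mathcal{A}$) and the single $d$-fold additive coordinate ($b + \sum_i c_i \in d \mathcal{A}$) so that the incidence ``point on line'' coincides exactly with the bilinear adjacency of $G$, while $|\mathcal{U}| |\mathcal{W}|$ stays pinned to $|\mathcal{A}|^d |\mathcal{A} \cdot \mathcal{A}|^{d - 1} |d \mathcal{A}|$. Everything else --- excluding the harmless $a_i = 0$ (a $(1 - o (1))$ loss), verifying injectivity, the eigenvalue computation, and the final case split --- is routine.
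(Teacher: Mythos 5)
Correct, and essentially the paper's own proof: both arguments apply the expander mixing lemma (Corollary \ref{edge-c-sesb}) to the Solymosi-type sum-product graph, exhibit an injective Elekes-type family of $(1-o(1))|\mathcal{A}|^{2d-1}$ edges between a vertex set of size $|\mathcal{A}\cdot\mathcal{A}|^{d-1}|d\mathcal{A}|$ and one of size at most $|\mathcal{A}|^d$, and finish with the same quadratic-inequality dichotomy; the paper merely embeds everything one dimension lower, in $\mathbbm{F}_q\times\mathbbm{F}_q^{d-1}$, splitting the $d$-fold sum as $(a_1+\cdots+a_d)+(-a_d)$ across the two scalar slots instead of folding it into an extra vector coordinate, which gives the slightly sharper error term $\sqrt{2q^{d-1}|\mathcal{E}||\mathcal{F}|}$. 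One minor slip that affects nothing: the nontrivial eigenvalues of your graph are not exactly $\pm q^{d/2}$ (one finds $\theta^2=q^d-q^{d-1}-\mu$ with $\mu\in\{q-1,-1\}$), but they are at most $q^{d/2}$ in absolute value, which is all your mixing step uses.
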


In analogy with the statement of Corollary \ref{cor-garaev} above, we note the following
consequence of Theorem \ref{spe-sesb}.

\begin{corollary}\label{cor-spe-sesb}
  Let $\mathcal{A}$ be an arbitrary subset of $\mathbbm{F}_q$ with cardinality
  $|\mathcal{A}| \gg q^{1 / 2}$.
  \begin{enumerate}
    \item Suppose that $|\mathcal{A} \cdot \mathcal{A}| \leq C|\mathcal{A}|$
    for an absolute constant $C > 0$. If $|\mathcal{A}| \gg q^{d / (2 d - 1)}$
    then $|d\mathcal{A}| \gg q$, and if $|\mathcal{A}| \ll q^{d / (2 d - 1)}$
    then $|d\mathcal{A}| \gg |\mathcal{A}|^{2 d - 1} / q^{d - 1}$.
    
    \item Suppose that $|\mathcal{A}+\mathcal{A}| \leq C|\mathcal{A}|$ for an
    absolute constant $C > 0$ and
    \[ q^{\frac{d + 1}{2 d + 1}} \ll |\mathcal{A}| \ll q^{\frac{d}{2 d - 1}},
    \]
    we have $|\mathcal{A} \cdot \mathcal{A}| \gg |\mathcal{A}| (q /
    |\mathcal{A}|)^{1 / d}$.
  \end{enumerate}
\end{corollary}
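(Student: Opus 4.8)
The plan is to feed the structural hypotheses directly into the consequence of Theorem~\ref{spe-sesb}, namely
\[ |\mathcal{A} \cdot \mathcal{A}|^{d-1}|d\mathcal{A}| \gg \min\left(q|\mathcal{A}|^{d-1}, \frac{|\mathcal{A}|^{3d-2}}{q^{d-1}}\right), \]
and to read off the two parts by a short case analysis. The whole argument hinges on deciding which of the two monomials in the minimum is the smaller, and this is governed precisely by the threshold $|\mathcal{A}| = q^{d/(2d-1)}$: the ratio of the first term to the second is $q^d/|\mathcal{A}|^{2d-1}$, so the second term is the minimum exactly when $|\mathcal{A}| \ll q^{d/(2d-1)}$ and the first term when $|\mathcal{A}| \gg q^{d/(2d-1)}$. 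I note in passing that the standing hypothesis $|\mathcal{A}| \gg q^{1/2}$ is what makes every bound below exceed the trivial estimate $|d\mathcal{A}| \geq |\mathcal{A}|$, i.e.\ non-trivial.

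For Part (1) I would use $|\mathcal{A}\cdot\mathcal{A}| \leq C|\mathcal{A}|$ to bound the left side from above by $C^{d-1}|\mathcal{A}|^{d-1}|d\mathcal{A}|$ and then cancel $|\mathcal{A}|^{d-1}$, obtaining
\[ |d\mathcal{A}| \gg \min\left(q, \frac{|\mathcal{A}|^{2d-1}}{q^{d-1}}\right). \]
By the threshold computation above, the minimum equals $q$ when $|\mathcal{A}| \gg q^{d/(2d-1)}$, giving $|d\mathcal{A}| \gg q$, and equals $|\mathcal{A}|^{2d-1}/q^{d-1}$ when $|\mathcal{A}| \ll q^{d/(2d-1)}$, giving $|d\mathcal{A}| \gg |\mathcal{A}|^{2d-1}/q^{d-1}$. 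These are exactly the two assertions of Part (1).

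For Part (2) the roles are reversed: the small quantity is the sumset and I want a lower bound on the product set. The first step is to convert the hypothesis $|\mathcal{A}+\mathcal{A}| \leq C|\mathcal{A}|$ into control of $|d\mathcal{A}|$, and here I would invoke the Pl\"unnecke--Ruzsa inequality, which yields $|d\mathcal{A}| \leq (|\mathcal{A}+\mathcal{A}|/|\mathcal{A}|)^{d}|\mathcal{A}| \leq C^{d}|\mathcal{A}|$, so $|d\mathcal{A}| \ll |\mathcal{A}|$. Since we are in the regime $|\mathcal{A}| \ll q^{d/(2d-1)}$, the minimum in the consequence of Theorem~\ref{spe-sesb} is the second term, so
\[ |\mathcal{A}\cdot\mathcal{A}|^{d-1} \gg \frac{|\mathcal{A}|^{3d-2}}{q^{d-1}|d\mathcal{A}|} \gg \frac{|\mathcal{A}|^{3d-3}}{q^{d-1}} = \left(\frac{|\mathcal{A}|^{3}}{q}\right)^{d-1}, \]
whence $|\mathcal{A}\cdot\mathcal{A}| \gg |\mathcal{A}|^{3}/q$. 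I would finish by weakening this to the stated form: the lower constraint $|\mathcal{A}| \gg q^{(d+1)/(2d+1)}$ is equivalent to $|\mathcal{A}|^{2d+1} \gg q^{d+1}$, which rearranges to $|\mathcal{A}|^{3}/q \gg |\mathcal{A}|(q/|\mathcal{A}|)^{1/d}$, giving the advertised inequality.

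I expect the only genuinely non-elementary ingredient to be the Pl\"unnecke--Ruzsa step in Part (2); the sumset hypothesis is used only through it, and without it one cannot control $|d\mathcal{A}|$ at all, so this is the essential input rather than a routine one. The remaining care is purely bookkeeping of the two competing monomials in the minimum, together with one subtlety worth flagging: the derivation in Part (2) in fact produces the stronger bound $|\mathcal{A}\cdot\mathcal{A}| \gg |\mathcal{A}|^{3}/q$, and the specific lower endpoint $q^{(d+1)/(2d+1)}$ of the admissible range is exactly the point at which this coincides with the weaker stated bound $|\mathcal{A}|(q/|\mathcal{A}|)^{1/d}$. Thus the two range hypotheses $q^{(d+1)/(2d+1)} \ll |\mathcal{A}| \ll q^{d/(2d-1)}$ are precisely what is needed to make both the branch selection and the final comparison valid.
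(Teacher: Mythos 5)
Your proof is correct and is exactly the derivation the paper intends: the corollary is stated there without an explicit proof, as a direct consequence of Theorem~\ref{spe-sesb}, and your case analysis on the minimum (with threshold $|\mathcal{A}| = q^{d/(2d-1)}$) plus the final exponent comparison $|\mathcal{A}|^3/q \gg |\mathcal{A}|(q/|\mathcal{A}|)^{1/d} \iff |\mathcal{A}| \gg q^{(d+1)/(2d+1)}$ is precisely the implicit argument. The one ingredient the paper leaves unstated---converting $|\mathcal{A}+\mathcal{A}| \leq C|\mathcal{A}|$ into $|d\mathcal{A}| \leq C^d|\mathcal{A}|$ via the Pl\"unnecke--Ruzsa inequality in Part (2), which is genuinely needed since Theorem~\ref{spe-sesb} involves $|d\mathcal{A}|$ rather than $|\mathcal{A}+\mathcal{A}|$---you identify and supply correctly.
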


Using the machinery developed in this paper, we also can study systems of
sum-product equations over finite fields. More precisely, we have the
following result.

\begin{theorem}\label{sp-system-sesb}
  For any $\mathcal{E} \subseteq \mathbbm{F}_q \times \mathbbm{F}_q^d$ and a
  non-degenerate bilinear form $B (\cdot, \cdot)$ on $\mathbbm{F}_q^d$, 
  consider the system $\mathcal{L}$ of $\binom{t}{2}$ equations
  \begin{equation}\label{sp-s-sesb} a_i + a_j + \lambda_{i j} = B (\tmmathbf{b}_i, \tmmathbf{b}_j), \end{equation}
  with $(a_i, \tmmathbf{b}_i) \in \mathcal{E}$, $1 \leq i \leq t$. Suppose
  that $|\mathcal{E}| \gg q^{\frac{d}{2} + t - 1}$, then the number of
  solutions of the system (\ref{sp-s-sesb}) is close to the expected number
  \[ (1 + o (1)) \frac{|\mathcal{E}|^t}{q^{\binom{t}{2}}} . \]
  In addition, if $|\mathcal{E}| \gg q^{(d + t) / 2}$ then the system (\ref{sp-s-sesb}) is
  solvable for $(1 - o (1)) q^{\binom{t}{2}}$ possible choices of $\lambda_{i
  j} \in \mathbbm{F}_q$, $1 \leq i < j \leq t$.
\end{theorem}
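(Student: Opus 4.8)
The plan is to recast the system (\ref{sp-s-sesb}) as a statement about colored copies of $K_t$ in a pseudorandom colored graph, and then quote the colored subgraph results of Section~\ref{graph-result-sesb}. Concretely, I take the sum-product graph built for the single equation in the proof of Theorem~\ref{sp-equation-sesb}: its vertex set is $V =\mathbbm{F}_q \times \mathbbm{F}_q^d$, and I color the edge joining $(a, \tmmathbf{b})$ and $(c, \tmmathbf{d})$ by the field element $B (\tmmathbf{b}, \tmmathbf{d}) - a - c \in \mathbbm{F}_q$. With this coloring the color set is $\mathcal{C} =\mathbbm{F}_q$, and an ordered $t$-tuple $((a_1, \tmmathbf{b}_1), \ldots, (a_t, \tmmathbf{b}_t)) \in \mathcal{E}^t$ is a solution of (\ref{sp-s-sesb}) for the parameters $(\lambda_{i j})$ exactly when it spans a copy of $K_t$ whose edge $\{i, j\}$ carries color $\lambda_{i j}$. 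Thus both assertions become statements about the number of colored copies of $K_t$ inside the induced subgraph on $\mathcal{E}$.

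Next I record the spectral data of this colored graph, which is the only substantive input. Fix a color $\mu$; the neighbours of $(a, \tmmathbf{b})$ in color $\mu$ are the pairs $(c, \tmmathbf{d})$ with $c = B (\tmmathbf{b}, \tmmathbf{d}) - a - \mu$, one for each $\tmmathbf{d} \in \mathbbm{F}_q^d$, so each color class is $q^d$-regular on $n = q^{d + 1}$ vertices; in particular $n / q^d = q = |\mathcal{C}|$, the balance required by Theorem~\ref{colored-subgraph-sesb}. A character-sum computation, identical to the one used for Theorem~\ref{sp-equation-sesb}, bounds the nontrivial eigenvalues: writing $f (a, \tmmathbf{b}) = \chi (\alpha a) g (\tmmathbf{b})$ one checks that the adjacency operator of a color class sends the sector $\alpha$ to the sector $- \alpha$ through the map $g \mapsto \sum_{\tmmathbf{d}} \chi (\alpha B (\tmmathbf{b}, \tmmathbf{d})) g (\tmmathbf{d})$ in the $\tmmathbf{b}$-variable, whose composition with its $- \alpha$ counterpart equals $q^d$ times the identity (here the non-degeneracy of $B$ is used); hence the nontrivial eigenvalues are $0$ and $\pm q^{d / 2}$. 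Consequently $G$ is a $(q^{d + 1}, q^d, q^{d / 2})$-colored graph.

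With these parameters the two claims follow from the colored subgraph theorems. For the first claim I apply Theorem~\ref{a-c-sesb} with $H$ the colored $K_t$, so that $s = t$, $r = \binom{t}{2}$ and maximum degree $\Delta = t - 1$. Since $n / q^d = q$, the hypothesis $m \gg \lambda (n / q^d)^{\Delta}$ reads $|\mathcal{E}| \gg q^{d / 2} q^{t - 1} = q^{d / 2 + t - 1}$, matching the assumption, and the theorem counts $(1 + o (1)) \frac{|\mathcal{E}|^t}{| \tmop{Aut} (H) |} q^{- \binom{t}{2}}$ copies, hence $(1 + o (1)) |\mathcal{E}|^t q^{- \binom{t}{2}}$ ordered solutions of (\ref{sp-s-sesb}). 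For the second claim I apply Theorem~\ref{colored-subgraph-sesb}: here $|\mathcal{C}| = q = n / q^d$, so the balance hypothesis holds, and the size hypothesis $m \gg \lambda (n / q^d)^{t / 2}$ becomes $|\mathcal{E}| \gg q^{(d + t) / 2}$; the theorem then guarantees at least $(1 - o (1)) |\mathcal{C}|^{\binom{t}{2}} = (1 - o (1)) q^{\binom{t}{2}}$ realizable colorings of $K_t$ inside $\mathcal{E}$, and each realizable coloring is precisely a tuple $(\lambda_{i j})$ for which the system is solvable.

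The only genuinely technical point, and hence the main obstacle, is the eigenvalue bound $\lambda = q^{d / 2}$ for the color classes; everything else is bookkeeping. Two minor care points arise there. First, loops and the coincidences $(c, \tmmathbf{d}) = (a, \tmmathbf{b})$ perturb each degree by $O (1)$, which only affects regularity through the admissible factor $1 + o (1)$ in the definition of an $(n, d, \lambda)$-colored graph. Second, when $B$ is not symmetric the color of $\{i, j\}$ depends on the orientation, so one works with the ordered convention $i < j$ (equivalently, one bounds the second singular value rather than the second eigenvalue); the character-sum estimate above is insensitive to this and still yields $q^{d / 2}$ from the non-degeneracy of $B$.
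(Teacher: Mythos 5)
Your proposal is correct and is essentially the paper's own proof: the paper derives Theorem \ref{sp-system-sesb} in one line from Lemma \ref{sp-graph-lemma} (the sum-product graph on $\mathbbm{F}_q \times \mathbbm{F}_q^d$, whose $q$ color classes make it a $(q^{d+1}, q^d, \sqrt{2q^d})$-colored graph) together with Theorem \ref{a-c-sesb} and Theorem \ref{colored-subgraph-sesb} --- exactly the two applications you make, with the same bookkeeping $n/d = q = |\mathcal{C}|$, so that $m \gg \lambda(n/d)^{t-1}$ and $m \gg \lambda(n/d)^{t/2}$ become $|\mathcal{E}| \gg q^{d/2+t-1}$ and $|\mathcal{E}| \gg q^{(d+t)/2}$ respectively. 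The one place you genuinely diverge is the spectral estimate itself: the paper proves Lemma \ref{sp-graph-lemma} combinatorially, counting common neighbors to get $A^2 = q^{d-1}J + (q^d - q^{d-1})I - E$ with $E$ the adjacency matrix of a $(q-1)$-regular graph, hence $\lambda \leq \sqrt{2q^d}$; your character-sum diagonalization (sectors $\chi(\alpha a)g(\tmmathbf{b})$, with the sector maps satisfying $M_{-\alpha}M_{\alpha} = q^d I$ by non-degeneracy of $B$) yields the exact nontrivial spectrum $\{0, \pm q^{d/2}\}$, which is slightly sharper but immaterial to the exponents. Your two care points are also well taken: the $O(1)$ perturbation from loops is absorbed by the $(1+o(1))$ in the definition of a colored graph, and the orientation issue for non-symmetric $B$ is one the paper glosses over --- its proof of Lemma \ref{sp-graph-lemma} writes $A^2 = AA^T$, which tacitly presumes symmetry, and the directed variant is only mentioned in passing at the end of Section \ref{resutls-sesb}.
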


\subsection{Pinned distances and systems of quadratic equations} \label{quadratic-result-sesb}

Let $Q (\cdot)$ be a non-degenerate quadratic form on $\mathbbm{F}_q^d$. Given any
$\tmmathbf{y} \in \mathbbm{F}_q^d$ and $\mathcal{E} \subset \mathbbm{F}_q^d$,
define the pinned distance set by
\[ \Delta_{\tmmathbf{y}}^Q (\mathcal{E}) =\{Q (\tmmathbf{x}-\tmmathbf{y}) :
   \tmmathbf{x} \in \mathcal{E}\}. \]
Chapman et al. \cite{chapman} obtained the following result using
Fourier analysis method.

\begin{theorem}(\cite{chapman}) \label{distance-opinned-sesb}
  Let $\mathcal{E} \subset \mathbbm{F}_q^d, d \geq 2$. Suppose that
  $|\mathcal{E}| \geq q^{\frac{d + 1}{2}}$. There exists a subset
  $\mathcal{E}'$ of $\mathcal{E}$ with $|\mathcal{E}' | \gtrsim |\mathcal{E}|$
  such that for every $y \in \mathcal{E}$, one has
  \[ | \Delta_{\tmmathbf{y}}^Q (\mathcal{E}) | > q / 2, \]
  where $Q(\tmmathbf{x}) = x_1^2+\ldots+x_d^2$.
\end{theorem}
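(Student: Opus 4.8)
The plan is to run a second-moment argument on the finite Euclidean (distance) graph, in the same spirit as the pinned dot product result (Theorem~\ref{bilinear-pinned-sesb}). First I would set up the graph: let $V=\mathbbm{F}_q^d$ and, for each $\lambda\in\mathbbm{F}_q^\ast$, let $G_\lambda$ be the graph on $V$ in which $\tmmathbf{x}\sim\tmmathbf{y}$ iff $Q(\tmmathbf{x}-\tmmathbf{y})=\lambda$. Coloring each edge $\{\tmmathbf{x},\tmmathbf{y}\}$ by the value $Q(\tmmathbf{x}-\tmmathbf{y})$ turns $V$ into a colored graph, and the key input is that this is an $(n,\tilde d,\theta)$-colored graph with $n=q^d$, degree $\tilde d=(1+o(1))q^{d-1}$, and second eigenvalue bounded by $\theta\le 2q^{(d-1)/2}$: the degree is the standard sphere count for a nondegenerate quadratic form, and the eigenvalue bound is the classical Gauss-sum estimate for finite Euclidean graphs. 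Note $n/\tilde d=(1+o(1))q$, and the color set $\mathbbm{F}_q^\ast$ has size $(1+o(1))n/\tilde d$.

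Next I would reduce the pinned statement to a single scalar estimate. For $\tmmathbf{y}\in\mathcal{E}$ and $\lambda\in\mathbbm{F}_q$ write $N(\tmmathbf{y},\lambda)=|\{\tmmathbf{x}\in\mathcal{E}:Q(\tmmathbf{x}-\tmmathbf{y})=\lambda\}|$, so that $\sum_\lambda N(\tmmathbf{y},\lambda)=|\mathcal{E}|$ and $|\Delta_{\tmmathbf{y}}^Q(\mathcal{E})|=|\{\lambda:N(\tmmathbf{y},\lambda)>0\}|$. Cauchy--Schwarz gives
\[
|\Delta_{\tmmathbf{y}}^Q(\mathcal{E})|\ \ge\ \frac{|\mathcal{E}|^2}{\sum_\lambda N(\tmmathbf{y},\lambda)^2},
\]
so $|\Delta_{\tmmathbf{y}}^Q(\mathcal{E})|>q/2$ whenever $\sum_\lambda N(\tmmathbf{y},\lambda)^2<2|\mathcal{E}|^2/q$. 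Hence it suffices to show that all but a small fraction of $\tmmathbf{y}\in\mathcal{E}$ satisfy this inequality.

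The heart of the matter is then to bound the aggregate $T=\sum_{\tmmathbf{y}\in\mathcal{E}}\sum_\lambda N(\tmmathbf{y},\lambda)^2$, which counts triples $(\tmmathbf{x},\tmmathbf{x}',\tmmathbf{y})\in\mathcal{E}^3$ that are isosceles with apex $\tmmathbf{y}$; the degenerate part $\tmmathbf{x}=\tmmathbf{x}'$ contributes only $|\mathcal{E}|^2\ll|\mathcal{E}|^3/q$. For a fixed color $\lambda$, writing $A_\lambda$ for the adjacency matrix of $G_\lambda$ and decomposing $\mathbbm{1}_{\mathcal{E}}=\tfrac{|\mathcal{E}|}{n}\mathbbm{1}+\tmmathbf{v}$ with $\tmmathbf{v}\perp\mathbbm{1}$, the bound $\|A_\lambda\tmmathbf{v}\|\le\theta\|\tmmathbf{v}\|\le\theta\sqrt{|\mathcal{E}|}$ (together with restricting to the $\mathcal{E}$-coordinates) yields
\[
\sum_{\tmmathbf{y}\in\mathcal{E}}N(\tmmathbf{y},\lambda)^2\ \le\ |\mathcal{E}|\left(\frac{|\mathcal{E}|\tilde d}{n}+\theta\right)^2 .
\]
Summing over the $q-1$ nonzero colors, and treating $\lambda=0$ (the isotropic sphere) separately by the same estimate as a lower-order term, gives $T\le (1+o(1))|\mathcal{E}|^3/q+O\!\big(|\mathcal{E}|^2q^{(d-1)/2}+|\mathcal{E}|q^{d}\big)$, matching the random heuristic $N(\tmmathbf{y},\lambda)\approx|\mathcal{E}|/q$. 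I would then finish by Markov: since the average of $\sum_\lambda N(\tmmathbf{y},\lambda)^2$ over $\tmmathbf{y}\in\mathcal{E}$ is $T/|\mathcal{E}|=(1+o(1))|\mathcal{E}|^2/q$, the set $\mathcal{E}'$ of $\tmmathbf{y}$ with $\sum_\lambda N(\tmmathbf{y},\lambda)^2<2|\mathcal{E}|^2/q$ has size $\gtrsim|\mathcal{E}|$, and every $\tmmathbf{y}\in\mathcal{E}'$ satisfies $|\Delta_{\tmmathbf{y}}^Q(\mathcal{E})|>q/2$ (indeed $=(1-o(1))q$).

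The two places that demand care — and the real content behind the clean statement — are the eigenvalue bound $\theta\le 2q^{(d-1)/2}$, which rests on Gauss/Kloosterman-sum estimates for the spheres $Q(\tmmathbf{z})=\lambda$, and the bookkeeping of constants at the critical exponent. At exactly $|\mathcal{E}|=q^{(d+1)/2}$ the $\theta^2$-driven error $|\mathcal{E}|q^d$ is only comparable to the main term $|\mathcal{E}|^3/q$, so to turn ``$\gtrsim|\mathcal{E}|$'' into a genuine positive (or $(1-o(1))$) proportion one works with $|\mathcal{E}|\gg q^{(d+1)/2}$, exactly the relaxation of $\ge$ to $\gg$ already used for Theorem~\ref{bilinear-pinned-sesb}. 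I expect this constant-tracking to be the main obstacle to the sharp threshold, while the spectral estimate of $T$ is the routine engine of the proof.
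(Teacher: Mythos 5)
Your proposal is, at bottom, the same spectral second-moment argument the paper uses, just reorganized. Your per-color estimate $\sum_{\tmmathbf{y}\in\mathcal{E}}N(\tmmathbf{y},\lambda)^2\le|\mathcal{E}|\left(\frac{|\mathcal{E}|\tilde d}{n}+\theta\right)^2$ is exactly Lemma \ref{path-sesb} applied with $B=C=\mathcal{E}$, and your eigenvalue input is the paper's Theorem \ref{euclidean graphs}. The difference is order of operations: you apply Cauchy--Schwarz per vertex (over the values $\lambda$) and then Markov over $\tmmathbf{y}\in\mathcal{E}$, while the paper applies Cauchy--Schwarz per color over the vertices (Lemma \ref{e-pinned-sesb}, built from Corollary \ref{edge-c-sesb} and the path/$K_{2,t}$ counts), then sums over the $q-1$ nonzero colors and pigeonholes to extract $\mathcal{E}'$. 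Both organizations give the theorem in its relaxed form $|\mathcal{E}|\gg q^{(d+1)/2}$, which is precisely the concession the paper itself makes in Theorem \ref{distance-pinned-sesb}.

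There is, however, one step that fails as written: your treatment of $\lambda=0$. Your Cauchy--Schwarz denominator runs over all of $\mathbbm{F}_q$, so you need to control $\sum_{\tmmathbf{y}\in\mathcal{E}}N(\tmmathbf{y},0)^2$, but the bound $\theta\le 2q^{(d-1)/2}$ (Theorem \ref{euclidean graphs}) holds only for $\lambda\in\mathbbm{F}_q^{*}$, and the zero-distance graph is genuinely not asymptotically Ramanujan: its second eigenvalue is of order $q^{d/2}$. Already for $d=2$ and $Q(\tmmathbf{z})=z_1z_2$ the isotropic cone is a union of two lines and the corresponding Cayley graph has an eigenvalue equal to $q-2$, far above $2q^{1/2}$. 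So ``the same estimate'' simply does not exist for this color. The gap is repairable in two ways. (i) Prove or cite the standard Gauss-sum bound $O(q^{d/2})$ for the isotropic-cone graph; plugging it into your Minkowski-type inequality gives a $\lambda=0$ contribution of order $|\mathcal{E}|^3/q^2+|\mathcal{E}|^2q^{d/2-1}+|\mathcal{E}|q^d$, which is $o(|\mathcal{E}|^3/q)$ under your hypothesis, so the term really is lower order --- but with a different eigenvalue bound, not the same one. (ii) Restructure as the paper does and never touch the zero sphere: for each fixed $\lambda\ne 0$, lower bound the number of $\tmmathbf{y}$ with $N(\tmmathbf{y},\lambda)\ge 1$ by $\left(\sum_{\tmmathbf{y}}N(\tmmathbf{y},\lambda)\right)^2/\sum_{\tmmathbf{y}}N(\tmmathbf{y},\lambda)^2$, then sum over the nonzero colors; this is Lemma \ref{e-pinned-sesb}. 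Note also that simply deleting $\lambda=0$ from your own Cauchy--Schwarz is not free: the numerator becomes $\left(|\mathcal{E}|-N(\tmmathbf{y},0)\right)^2$, and for $d\ge 4$ the trivial bound on pairs at distance zero, $|\mathcal{E}|\cdot O(q^{d-1})$, does not show $N(\tmmathbf{y},0)=o(|\mathcal{E}|)$ for most $\tmmathbf{y}$ in the range $q^{(d+1)/2}\ll|\mathcal{E}|\ll q^{d-1}$, so some spectral control of the zero sphere is needed on that route anyway. Finally, a small point: your parenthetical claim that $|\Delta^Q_{\tmmathbf{y}}(\mathcal{E})|=(1-o(1))q$ on $\mathcal{E}'$ does not follow from Markov at the threshold $2|\mathcal{E}|^2/q$; one should subtract the pointwise minimum $|\mathcal{E}|^2/q$ of $\sum_{\lambda}N(\tmmathbf{y},\lambda)^2$ and apply Markov to the nonnegative excess. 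For the stated theorem ($>q/2$ on a set of size $\gtrsim|\mathcal{E}|$) your version suffices.
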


As a corollary of our graph theoretic results, we will present another proof of Theorem \ref{distance-opinned-sesb}. In fact, we will prove a more general result.

\begin{theorem} \label{distance-pinned-sesb}
  Let $Q$ be any non-degenerate quadratic form on $\mathbbm{F}_q^d$. Let
  $\mathcal{E} \subset \mathbbm{F}^d_q$, $d \geq 2$. Suppose that
  $|\mathcal{E}| \gg q^{(d + 1) / 2}$. There exists a subset $\mathcal{E}'
  \subset \mathcal{E}$ with cardinality $|\mathcal{E}' | = (1 - o (1))
  |\mathcal{E}|$ such that for every $\tmmathbf{y} \in \mathcal{E}'$, one has
  $| \Delta^Q_{\tmmathbf{y}} (\mathcal{E}) | = (1 - o (1)) q$.
\end{theorem}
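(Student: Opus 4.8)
The plan is to reduce Theorem~\ref{distance-pinned-sesb} to the bipartite-subgraph count of Theorem~\ref{book-sesb} (the $K_{2,t}$ case) applied to a suitable ``distance graph'' attached to the quadratic form $Q$. First I would construct, for each value $\lambda \in \mathbbm{F}_q^{\ast}$, the graph $G_\lambda$ on vertex set $\mathbbm{F}_q^d$ in which $\tmmathbf{x} \sim \tmmathbf{y}$ if and only if $Q(\tmmathbf{x}-\tmmathbf{y}) = \lambda$. Taking these together gives an edge-colored graph (colored by the value of the $Q$-distance), and the standard computation—diagonalizing the adjacency operator via additive/Gauss-sum characters of $\mathbbm{F}_q^d$—shows that each $G_\lambda$ is an $(n,d_0,\lambda_0)$-graph with $n = q^d$, degree $d_0 = (1+o(1))q^{d-1}$, and second eigenvalue $\lambda_0 \lesssim q^{(d-1)/2}$. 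This is exactly the eigenvalue input the theorems in Section~\ref{graph-result-sesb} require; I would cite or recall the Gauss-sum estimate for the relevant exponential sum that bounds $\lambda_0$.

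Next I would set up the pinned count. Fix $\mathcal{E}$ and consider, for a pinned point $\tmmathbf{y}$ and a distance value $\lambda$, the number of $\tmmathbf{x} \in \mathcal{E}$ with $Q(\tmmathbf{x}-\tmmathbf{y}) = \lambda$; equivalently the degree of $\tmmathbf{y}$ into $\mathcal{E}$ in the color-$\lambda$ graph. The second-moment / expander-mixing heuristic says this degree should be $(1+o(1))|\mathcal{E}|/q$ for almost every pair $(\tmmathbf{y},\lambda)$. To make ``$\tmmathbf{y}$ sees almost every $\lambda$'' precise I would count pairs of points $\tmmathbf{x}_1,\tmmathbf{x}_2 \in \mathcal{E}$ equidistant from $\tmmathbf{y}$, i.e.\ $K_{2,t}$-type configurations: the number of $\tmmathbf{y}$ together with $t$ points of $\mathcal{E}$ all at a common distance is controlled, via Theorem~\ref{book-sesb} (or its colored version Theorem~\ref{book-c-sesb}), by $(1+o(1))$ times its expected value under the hypothesis $|\mathcal{E}|^2 \gg \lambda_0^2 (n/d_0)^{t+1}$. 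With the eigenvalue data above this hypothesis reduces to $|\mathcal{E}| \gg q^{(d+1)/2}$ precisely because the $q^{(d-1)/2}$ from $\lambda_0$ and the $q$ from $(n/d_0)$ combine to the stated exponent; here $\tmmathbf{y}$ ranges over $\mathcal{E}$ itself so $U_1 = U_2 = \mathcal{E}$.

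The logical step that turns the count into the conclusion is a variance/Cauchy--Schwarz argument: letting $f(\tmmathbf{y}) = |\Delta^Q_{\tmmathbf{y}}(\mathcal{E})|$ be the number of \emph{distinct} distances from $\tmmathbf{y}$ to $\mathcal{E}$, I would bound the number of ``repeated-distance'' pairs (the second moment $\sum_{\tmmathbf{y}\in\mathcal{E}} \sum_\lambda (\deg_\lambda \tmmathbf{y})^2$) using the $K_{2,t}$ count with $t=2$, compare it to the first moment $\sum_{\tmmathbf{y},\lambda} \deg_\lambda \tmmathbf{y} = |\mathcal{E}|^2$, and conclude that for all but $o(|\mathcal{E}|)$ of the pinned points $\tmmathbf{y}$ the distance distribution is nearly uniform, forcing $|\Delta^Q_{\tmmathbf{y}}(\mathcal{E})| = (1-o(1))q$. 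Defining $\mathcal{E}'$ to be the set of such ``good'' $\tmmathbf{y}$ gives $|\mathcal{E}'| = (1-o(1))|\mathcal{E}|$ as required.

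The main obstacle I anticipate is bookkeeping the zero distance and the degenerate directions of $Q$: the set $\{\tmmathbf{z} : Q(\tmmathbf{z})=0\}$ (the isotropic cone) has size $\approx q^{d-1}$, comparable to the degree, so the color-$0$ graph is \emph{not} a good expander and must be excluded, and one must verify that discarding $\lambda=0$ costs only $o(q)$ distinct values and hence does not affect the $(1-o(1))q$ conclusion. A secondary technical point is confirming that the eigenvalue bound $\lambda_0 \lesssim q^{(d-1)/2}$ holds uniformly for \emph{every} non-degenerate $Q$ (not just $Q(\tmmathbf{x})=x_1^2+\cdots+x_d^2$), which follows from the classification of non-degenerate quadratic forms over $\mathbbm{F}_q$ up to equivalence and the corresponding Gauss-sum evaluation; once this uniformity is in hand the rest is the routine expander-mixing estimate above.
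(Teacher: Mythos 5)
Your proposal is correct and is essentially the paper's own argument: the paper likewise colors the complete graph on $\mathbbm{F}_q^d$ by the $Q$-distance, invokes the spectral bound of Theorem~\ref{euclidean graphs} (citing Bannai--Shimabukuro--Tanaka and Kwok for arbitrary non-degenerate $Q$, rather than redoing the Gauss-sum computation) for the $(q-1)$ nonzero colors only---exactly your exclusion of the isotropic cone---and then runs precisely your first-moment/second-moment Cauchy--Schwarz plus averaging step, packaged there as Lemma~\ref{e-pinned-sesb} followed by pigeonholing over $\tmmathbf{y} \in \mathcal{E}$. One parameter should be fixed: in the $K_{2,t}$ notation of Theorem~\ref{book-sesb} the pinned point $\tmmathbf{y}$ is the $t$-side with $t=1$ and the equidistant pair $\tmmathbf{x}_1,\tmmathbf{x}_2$ is the ``$2$''-side, so the hypothesis reads $|\mathcal{E}|^2 \gg \lambda_0^2 (n/d_0)^2 \sim q^{d+1}$, consistent with your stated threshold $q^{(d+1)/2}$, whereas your later ``$t=2$'' would instead demand $|\mathcal{E}| \gg q^{(d+2)/2}$.
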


Note that the proof of \cite[Theorem 2.3]{chapman} implies Theorem \ref{distance-pinned-sesb} and vice versa. We again relax
the condition on $|\mathcal{E}| \geq q^{(d + 1) / 2}$ to $|\mathcal{E}| \gg
q^{(d + 1) / 2}$ to simplify the argument in the proof.

Next we will prove the following result on the solvability of system of
quadratic equations (or equivalently, the existence of the simplices over finite fields).

\begin{theorem} \label{simplex-system-sesb}
  For any non-degenerate quadratic form $Q$ on $\mathbbm{F}_q^d$ and any set
  $\mathcal{E} \subset \mathbbm{F}_q^d$, we consider the following system of
  $l \leq \binom{t}{2}$ equations
  \[ Q (\tmmathbf{a}_i -\tmmathbf{a}_j) = \lambda_{i j}, a_i \in \mathcal{E},
     i = 1, \ldots, t. \]
  If $|\mathcal{E}| \gg q^{(d + t - 1) / 2}$ then this system is solvable for
  at least $(1 - o (1)) q^{\binom{t}{2}}$ possible choices of $\lambda_{i j}
  \in \mathbbm{F}_q$.
\end{theorem}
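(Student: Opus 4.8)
The plan is to encode the system of quadratic equations as a coloring problem on a suitable distance graph and then invoke Theorem \ref{colored-subgraph-sesb}. Define the $Q$-distance graph $G_Q$ on the vertex set $V = \mathbbm{F}_q^d$ by joining $\tmmathbf{x}$ and $\tmmathbf{y}$ whenever $Q(\tmmathbf{x} - \tmmathbf{y}) \neq 0$, and color the edge $\{\tmmathbf{x}, \tmmathbf{y}\}$ by the value $Q(\tmmathbf{x} - \tmmathbf{y}) \in \mathbbm{F}_q^{\ast}$. The color set is then $\mathcal{C} = \mathbbm{F}_q^{\ast}$ with $|\mathcal{C}| = q - 1$, and the subgraph $G_\lambda$ on a single color $\lambda$ is precisely the Cayley graph on the additive group $\mathbbm{F}_q^d$ with connection set $S_\lambda = \{\tmmathbf{z} : Q(\tmmathbf{z}) = \lambda\}$ (which is symmetric since $Q(\tmmathbf{z}) = Q(-\tmmathbf{z})$ and loop-free since $\lambda \neq 0$). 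The point of this encoding is that realizing a prescribed coloring $(\lambda_{ij})$ of $K_t$ inside $\mathcal{E}$ is exactly the same as producing a solution $\tmmathbf{a}_1, \ldots, \tmmathbf{a}_t \in \mathcal{E}$ of the system $Q(\tmmathbf{a}_i - \tmmathbf{a}_j) = \lambda_{ij}$. Hence the theorem will follow once the hypotheses of Theorem \ref{colored-subgraph-sesb} are verified.

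The key technical step is to show that $G_Q$ is an $(n, D, \Lambda)$-colored graph (I write $D$ and $\Lambda$ for the degree and second eigenvalue to avoid clashing with the dimension $d$ and the parameters $\lambda_{ij}$) with $n = q^d$, $D = (1 + o(1)) q^{d-1}$, and $\Lambda \lesssim q^{(d-1)/2}$. Because each $G_\lambda$ is a Cayley graph on $\mathbbm{F}_q^d$, its adjacency eigenvalues are the character sums $\widehat{S_\lambda}(\tmmathbf{u}) = \sum_{\tmmathbf{z} : Q(\tmmathbf{z}) = \lambda} \psi(\tmmathbf{u} \cdot \tmmathbf{z})$, where $\tmmathbf{u}$ ranges over $\mathbbm{F}_q^d$ and $\psi$ is a fixed nontrivial additive character. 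The trivial character $\tmmathbf{u} = \tmmathbf{0}$ yields the degree $|S_\lambda| = (1 + o(1)) q^{d-1}$, uniformly over $\lambda \in \mathbbm{F}_q^{\ast}$ and independently of whether $\lambda$ is a square or of the type of $Q$; for $\tmmathbf{u} \neq \tmmathbf{0}$ a standard quadratic Gauss sum evaluation bounds $|\widehat{S_\lambda}(\tmmathbf{u})|$ by $O(q^{(d-1)/2})$. I expect this character-sum estimate to be the main obstacle, but it is the same eigenvalue computation underlying the pinned distance graph in Theorem \ref{distance-pinned-sesb}, so it may be quoted with at most cosmetic changes.

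With these parameters the arithmetic matches exactly: $n/D = (1 + o(1)) q$, so the color-set condition $|\mathcal{C}| = q - 1 = (1 - o(1)) n/D$ of Theorem \ref{colored-subgraph-sesb} is met, and the size threshold becomes $\Lambda (n/D)^{t/2} \approx q^{(d-1)/2} q^{t/2} = q^{(d+t-1)/2}$, which is precisely the hypothesis $|\mathcal{E}| \gg q^{(d+t-1)/2}$. Applying Theorem \ref{colored-subgraph-sesb} with $U = \mathcal{E}$ and $m = |\mathcal{E}|$ shows that the induced subgraph of $G_Q$ on $\mathcal{E}$ realizes at least $(1 - o(1)) |\mathcal{C}|^{\binom{t}{2}}$ distinct colorings of $K_t$. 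Since $|\mathcal{C}|^{\binom{t}{2}} = (q - 1)^{\binom{t}{2}} = (1 - o(1)) q^{\binom{t}{2}}$, this is exactly the assertion that the complete system $Q(\tmmathbf{a}_i - \tmmathbf{a}_j) = \lambda_{ij}$ is solvable in $\mathcal{E}$ for at least $(1 - o(1)) q^{\binom{t}{2}}$ of the tuples $(\lambda_{ij}) \in (\mathbbm{F}_q^{\ast})^{\binom{t}{2}} \subset \mathbbm{F}_q^{\binom{t}{2}}$, which completes the proof; the case of fewer than $\binom{t}{2}$ equations only relaxes the constraints and is therefore easier.
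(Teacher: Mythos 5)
Your proposal is correct and follows essentially the same route as the paper: the same $\mathbbm{F}_q^{\ast}$-colored distance graph on $\mathbbm{F}_q^d$ with parameters $(q^d,\,(1+o(1))q^{d-1},\,O(q^{(d-1)/2}))$, and the same invocation of Theorem \ref{colored-subgraph-sesb} with the identical arithmetic check on the color count and the threshold $q^{(d+t-1)/2}$. The only difference is cosmetic: you sketch the spectral bound directly via Cayley-graph character sums (Gauss sum estimates), whereas the paper quotes the same bound from Bannai--Shimabukuro--Tanaka and Kwok as Theorem \ref{euclidean graphs}.
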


We obtain a stronger result when $\mathcal{E}$ is a subset of the
$d$-dimensional $Q$-sphere $S^d(Q) =\{\tmmathbf{x} \in \mathbbm{F}_q^d :
Q(\tmmathbf{x})= 1\}$.

\begin{theorem} \label{simplex-sphere-sesb}
  For any non-degenerate quadratic form $Q$ and any set
  $\mathcal{E} \subset S^d(Q)$, we consider the following system of $l \leq
  \binom{t}{2}$ equations
  \[ Q (\tmmathbf{a}_i -\tmmathbf{a}_j) = \lambda_{i j}, a_i \in \mathcal{E},
     i = 1, \ldots, t. \]
  If $|\mathcal{E}| \gg q^{(d + t - 2) / 2}$ then this system is solvable for
  at least $(1 - o (1)) q^{\binom{t}{2}}$ possible choices of $\lambda_{i j}
  \in \mathbbm{F}_q$.
\end{theorem}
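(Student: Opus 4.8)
The plan is to run the scheme used for Theorem \ref{simplex-system-sesb} essentially verbatim, but with the full-space distance graph replaced by its analogue living on the $Q$-sphere, which is sparser and correspondingly has a smaller second eigenvalue. The key preliminary observation is that on $S^d(Q)$ the quadratic distance is governed by the associated symmetric bilinear form $B$, normalized so that $B(\mathbf{x},\mathbf{x})=Q(\mathbf{x})$: for $\mathbf{x},\mathbf{y}\in S^d(Q)$ one has $Q(\mathbf{x}-\mathbf{y})=Q(\mathbf{x})+Q(\mathbf{y})-2B(\mathbf{x},\mathbf{y})=2-2B(\mathbf{x},\mathbf{y})$. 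Since $q$ is odd, the map $\lambda\mapsto\mu=1-\lambda/2$ is a bijection of $\mathbb{F}_q$, so prescribing $Q(\mathbf{a}_i-\mathbf{a}_j)=\lambda_{ij}$ on the sphere is the same as prescribing $B(\mathbf{a}_i,\mathbf{a}_j)=\mu_{ij}$. Thus the quadratic system is equivalent to a bilinear system among the points of $\mathcal{E}\subset S^d(Q)$, and ``solvable for $(1-o(1))q^{\binom{t}{2}}$ choices of $(\lambda_{ij})$'' becomes the same statement for $(\mu_{ij})$.

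With this reduction I would introduce the edge-colored graph $G$ on the vertex set $V=S^d(Q)$, where the edge $\{\mathbf{x},\mathbf{y}\}$ receives the color $\mu=B(\mathbf{x},\mathbf{y})$, the color set being a cofinite subset of $\mathbb{F}_q$. A copy of $K_t$ in an induced subgraph whose edge $\{i,j\}$ carries color $\mu_{ij}$ is precisely a solution of the bilinear system with vertices in that subgraph, so Theorem \ref{colored-subgraph-sesb} will convert a lower bound on the number of realizable colorings into the desired solvability statement. The first, routine step is to record the parameters of each single-color graph $G_\mu$. The sphere has $n=|S^d(Q)|=(1+o(1))q^{d-1}$ points, and for fixed $\mathbf{x}$ the neighborhood $\{\mathbf{y}\in S^d(Q):B(\mathbf{x},\mathbf{y})=\mu\}$ is the intersection of the quadric with an affine hyperplane, whence $G_\mu$ is $(1+o(1))q^{d-2}$-regular for all but $o(q)$ values of $\mu$. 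Consequently $n/d=(1+o(1))q$, matching the cardinality hypothesis $|\mathcal{C}|=(1-o(1))n/d$ of Theorem \ref{colored-subgraph-sesb} once the $o(q)$ degenerate colors (the tangent sections, and the color $\mu=1$ corresponding to the zero distance $\lambda=0$) are discarded; for fixed $t$ this costs only $o(q^{\binom{t}{2}})$ colorings and so is harmless.

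The heart of the matter, and the step I expect to be the main obstacle, is to verify that $G$ is an $(n,d,\lambda)$-colored graph with $\lambda\lesssim q^{(d-2)/2}$; that is, each $G_\mu$ should satisfy a Ramanujan-type bound in which the nontrivial eigenvalues are of the order of $\sqrt{\deg}=q^{(d-2)/2}$. Because the sphere carries no additive group structure, $G_\mu$ is not a Cayley graph and its spectrum is not a single Fourier transform. I would instead expand $\mathbbm{1}[B(\mathbf{x},\mathbf{y})=\mu]=q^{-1}\sum_{s\in\mathbb{F}_q}\psi(s(B(\mathbf{x},\mathbf{y})-\mu))$ for a fixed nontrivial additive character $\psi$, so that $A_\mu=q^{-1}\sum_s\psi(-s\mu)D_s$ with $(D_s)_{\mathbf{x},\mathbf{y}}=\psi(sB(\mathbf{x},\mathbf{y}))$; the term $s=0$ gives $q^{-1}J$ and supplies the principal eigenvalue $q^{-1}n=(1+o(1))q^{d-2}$. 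The cleanest way to control the remaining eigenvalues is to compute $A_\mu^2$, i.e. the number of common $G_\mu$-neighbors of two sphere points, namely $\#\{\mathbf{z}\in S^d(Q):B(\mathbf{x},\mathbf{z})=B(\mathbf{y},\mathbf{z})=\mu\}$. This is the quadric cut by two affine hyperplanes, generically $(1+o(1))q^{d-3}=(1+o(1))\deg^2/n$ points, and the whole computation reduces to the sphere character sums $\sum_{\mathbf{z}\in S^d(Q)}\psi(sB(\mathbf{a},\mathbf{z}))$. A standard completing-the-square argument turns each such sum into a Kloosterman sum (for $d$ even) or a Salié sum (for $d$ odd) of modulus $O(\sqrt{q})$, so that the sphere character sum is $O(q^{(d-1)/2})$ for $\mathbf{a}\neq 0$. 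Feeding this into $A_\mu^2$ and keeping track of the cancellation produced by summing $\psi(-s\mu)$ over the character parameter should show that $A_\mu^2$ agrees with $(\deg^2/n)J$ up to an operator of norm $O(q^{d-2})$, whence the nontrivial eigenvalues of $A_\mu$ are $O(q^{(d-2)/2})$. Securing this operator-norm estimate, rather than a mere entrywise bound on the codegree fluctuations, is the delicate technical point, and the place where the gain of a full power of $q^{1/2}$ over the full-space case is actually earned.

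Granting $\lambda\lesssim q^{(d-2)/2}$, the conclusion is immediate. Here $n=(1+o(1))q^{d-1}$, the regularity degree (the parameter $d$ of Theorem \ref{colored-subgraph-sesb}) is $(1+o(1))q^{d-2}$, and $|\mathcal{C}|=(1-o(1))q$, so the threshold $m\gg\lambda(n/d)^{t/2}$ becomes $m\gg q^{(d-2)/2}q^{t/2}=q^{(d+t-2)/2}$. Hence any $\mathcal{E}\subset S^d(Q)$ with $|\mathcal{E}|\gg q^{(d+t-2)/2}$ fulfills the hypothesis with $m=|\mathcal{E}|$, and the induced subgraph of $G$ on $\mathcal{E}$ realizes at least $(1-o(1))|\mathcal{C}|^{\binom{t}{2}}=(1-o(1))q^{\binom{t}{2}}$ distinct colorings of $K_t$. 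Translating back through $\mu_{ij}\leftrightarrow\lambda_{ij}$, the system $Q(\mathbf{a}_i-\mathbf{a}_j)=\lambda_{ij}$ is solvable with $\mathbf{a}_i\in\mathcal{E}$ for at least $(1-o(1))q^{\binom{t}{2}}$ choices of $(\lambda_{ij})$, which is exactly the assertion of the theorem.
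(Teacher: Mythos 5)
Your reduction $Q(\tmmathbf{x}-\tmmathbf{y})=2-2B(\tmmathbf{x},\tmmathbf{y})$ on $S^d(Q)$, the recoloring by $\mu=B(\tmmathbf{x},\tmmathbf{y})$, and the final appeal to Theorem \ref{colored-subgraph-sesb} with $n=(1+o(1))q^{d-1}$, degree $(1+o(1))q^{d-2}$ and $|\mathcal{C}|=(1-o(1))q$ reproduce the paper's framework, and your threshold arithmetic $m\gg q^{(d-2)/2}\cdot q^{t/2}=q^{(d+t-2)/2}$ is correct. The gap sits exactly at the point you yourself flag as ``the delicate technical point'': the assertion that each color graph $G_\mu$ on the sphere \emph{points} satisfies $\lambda(G_\mu)=O(q^{(d-2)/2})$ is the entire content of the theorem, and your sketch does not prove it. The sphere sums $\sum_{\tmmathbf{z}\in S^d(Q)}\psi(sB(\tmmathbf{a},\tmmathbf{z}))=O(q^{(d-1)/2})$ are indeed elementary (a Gauss sum times a Kloosterman/Sali\'e sum), but they yield only entrywise codegree bounds, and the passage to an operator-norm bound is not bookkeeping. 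Concretely, writing $A_\mu=q^{-1}J+q^{-1}\sum_{s\neq0}\psi(-s\mu)D_s$ with $D_s(\tmmathbf{x},\tmmathbf{y})=\psi(sB(\tmmathbf{x},\tmmathbf{y}))$, the best a priori bound is $\|D_s\|\leq q^{d/2}$ (norm of a principal submatrix of the full-space character matrix), so the triangle inequality gives only $\lambda(G_\mu)\lesssim q^{d/2}$ --- a full factor of $q$ above the target. Recovering that factor requires genuine cancellation among the spectra of the $D_s$ as $s$ varies, which amounts to computing the spherical functions of the scheme of $O_d(\mathbbm{F}_q)$ acting on the sphere; already for $d=3$ this is at the level of the Katz/Evans bounds for Sali\'e/Soto--Andrade sums, not of a completed square. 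Since the sphere is not a group, there is no Cayley-graph shortcut, and this is precisely why the statement is hard.

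The paper avoids this obstacle rather than overcoming it: it replaces sphere points by the set $\Omega$ of antipodal pairs, whose color graphs are classes of the association scheme $\Psi(O_d(\mathbbm{F}_q),\Omega)$ of nonisotropic projective points, and it imports the eigenvalue bound $(2+o(1))q^{(d-2)/2}$ (Theorem \ref{mt-bst}) wholesale from the character tables of Bannai--Hao--Song \cite{bhs} and the analysis of Bannai--Shimabukuro--Tanaka \cite{bst}. Note also that citing those same results would not repair your version: under $\tmmathbf{x}\mapsto-\tmmathbf{x}$ the spectrum of your $G_\mu$ splits into an even part, which does coincide with the spectrum of the projective graphs $(\Omega,R_i)$, and an odd part (antipodally antisymmetric eigenfunctions) about which \cite{bhs,bst} say nothing. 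So you must either bound the odd spectrum separately or pass to $\Omega$ as the paper does --- in which case the colors become the $(q-3)/2$ unordered pairs $\{\mu,-\mu\}$ and the final counting has to be redone on the quotient, where a realized pair-coloring pins down each $\lambda_{ij}$ only up to $\lambda_{ij}\leftrightarrow 4-\lambda_{ij}$. Your point-graph formulation, if its spectral input were actually supplied, would be the cleaner and quantitatively stronger route; as written, however, the proposal assumes the hardest step.
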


Note that Theorems \ref{simplex-system-sesb} and \ref{simplex-sphere-sesb} could be obtained in a similar way as in the proofs of \cite[Theorem 2.13]{chapman} and \cite[Theorem 2.16]{chapman}. The proofs given here, however, are different, which are applicable to a large variety of problems. In addition, the results in Section \ref{graph-result-sesb} can also be extended for directed $(n,
d, \lambda)$-graphs. All the proofs go through without any essential changes
by replacing Theorem \ref{edge-t-sesb} and Corollary \ref{edge-c-sesb} by their corresponding directed
versions (\cite[Lemma 3.1]{van}). Applying these results, we can study the
sovability of systems of general equations
\[ P (\tmmathbf{a}_i -\tmmathbf{a}_j) = \lambda_{i j}, \tmmathbf{a}_i \in
   \mathcal{E}, 1 \leq i \leq t \]
in $\mathbbm{F}_q^d$, where $\mathcal{E}$ is a large subset of
$\mathbbm{F}_q^d$ and $P \in \mathbbm{F}_q [x_1, \ldots, x_d]$. One can show
that for a large family of non-degenerate polynomials $P$, if
$|\mathcal{E}| \gg q^{(d + t - 1) / 2}$, that system is solvable for at
least $c q^{\binom{t}{2}}$ possible choices of $\lambda_{i j} \in
\mathbbm{F}_q$. However, in order to keep this paper concise, we will restrict
our discussion only to results on undirected $(n, d, \lambda)$-graphs and
their applications.

\section{Properties of pseudo-random graphs}

In this section, we recall some results on the distribution of edges in $(n, d,
\lambda)$-graphs. For two (not necessarily) disjoint subsets of vertices $U,
W \subset V$, let $e (U, W)$ be the number of ordered pairs $(u, w)$ such that
$u \in U$, $w \in W$, and $(u, w)$ is an edge of $G$. For a vertex $v$ of $G$,
let $N (v)$ denote the set of vertices of $G$ adjacent to $v$ and let $d (v)$
denote its degree. Similarly, for a subset $U$ of the vertex set, let $N_U (v) = N
(v) \cap U$ and $d_U (v) = |N_U (v) |$. We first recall the following two
well-known facts (see, for example, \cite{as}).

\begin{theorem} (\cite[Theorem 9.2.4]{as})\label{edge-t-sesb}
  Let $G = (V, E)$ be an $(n, d, \lambda)$-colored graph. For any subset $U$
  of $V$, we have
  \[ \sum_{v \in V} (d_U (v) - d|U| / n)^2 < \lambda^2 |U|. \]
\end{theorem}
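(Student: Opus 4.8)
The plan is to translate the left-hand sum into the squared length of a vector and then exploit the spectral gap of $G$. Let $A$ denote the adjacency matrix of $G$, and let $\mathbbm{1}_U \in \mathbbm{R}^V$ be the characteristic vector of $U$, so that $(\mathbbm{1}_U)_v = 1$ if $v \in U$ and $0$ otherwise. The key observation is that $(A \mathbbm{1}_U)_v = \sum_{w \in U} A_{vw} = d_U(v)$, so the vector $A \mathbbm{1}_U$ records the $U$-degrees $d_U(v)$ as its coordinates. Writing $\mathbbm{1}$ for the all-ones vector, the quantity to be bounded is exactly $\|A \mathbbm{1}_U - (d|U|/n)\mathbbm{1}\|^2$.

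First I would decompose $\mathbbm{1}_U$ against the leading eigenvector. Since $G$ is $d$-regular, $\mathbbm{1}$ is an eigenvector of $A$ with eigenvalue $d$, spanning the top eigenspace. The orthogonal projection of $\mathbbm{1}_U$ onto $\mathbbm{1}$ is $(|U|/n)\mathbbm{1}$, so we may write $\mathbbm{1}_U = (|U|/n)\mathbbm{1} + w$ with $w \perp \mathbbm{1}$. Applying $A$ and using $A\mathbbm{1} = d\mathbbm{1}$ gives $A\mathbbm{1}_U = (d|U|/n)\mathbbm{1} + Aw$, whence $A\mathbbm{1}_U - (d|U|/n)\mathbbm{1} = Aw$, and the sum on the left equals $\|Aw\|^2$.

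Next I would invoke the spectral gap. Because $w$ lies in the orthogonal complement of the top eigenvector $\mathbbm{1}$, it is a combination of eigenvectors whose eigenvalues are all bounded in absolute value by $\lambda$, by the definition $\lambda = \max\{\lambda_2, -\lambda_n\}$. Hence $\|Aw\| \leq \lambda \|w\|$, so $\|Aw\|^2 \leq \lambda^2 \|w\|^2$. It remains to compute $\|w\|^2 = \|\mathbbm{1}_U\|^2 - (|U|/n)^2 \|\mathbbm{1}\|^2 = |U| - |U|^2/n = |U|(1 - |U|/n)$. Combining the two bounds yields $\sum_{v \in V} (d_U(v) - d|U|/n)^2 \leq \lambda^2 |U|(1 - |U|/n) < \lambda^2 |U|$, where the strict inequality comes from the factor $(1 - |U|/n) < 1$ for nonempty $U$ (the empty case being trivial).

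There is no serious obstacle here; the argument is a clean application of the eigenvalue decomposition. The only points requiring care are verifying that regularity makes $\mathbbm{1}$ the exact leading eigenvector, so that the projection is genuinely orthogonal to the rest of the spectrum, and keeping track of the factor $(1 - |U|/n)$, which is precisely what upgrades the estimate to a strict inequality.
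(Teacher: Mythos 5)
Your proof is correct and is precisely the standard spectral argument: the paper does not prove this statement itself but cites it from Alon--Spencer (Theorem 9.2.4), and your decomposition $\mathbbm{1}_U = (|U|/n)\mathbbm{1} + w$ with $w \perp \mathbbm{1}$, followed by $\|Aw\|^2 \leq \lambda^2\|w\|^2 = \lambda^2|U|(1-|U|/n)$, is exactly the textbook proof being invoked. The only caveat is the degenerate case $U = \emptyset$ (where both sides vanish and strict inequality fails), an artifact of the statement itself rather than of your argument.
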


The following result is an easy corollary of Theorem \ref{edge-t-sesb}.

\begin{corollary}(\cite[Corollary 9.2.5]{as})\label{edge-c-sesb}
  Let $G = (V, E)$ be an $(n, d, \lambda)$-graph. For any two sets $B, C
  \subset V$, we have
  \[ \left| e (B, C) - \frac{d|B | |C|}{n} \right| \leq \lambda \sqrt{|B| |C|}. \]
\end{corollary}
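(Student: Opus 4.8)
The plan is to derive the stated inequality directly from the variance estimate of Theorem \ref{edge-t-sesb} by a single application of the Cauchy--Schwarz inequality, with no spectral computation needed. First I would re-express $e(B, C)$ in terms of the restricted degrees $d_C(v) = |N(v) \cap C|$. Since $e(B, C)$ counts the ordered pairs $(b, c)$ with $b \in B$, $c \in C$, and $(b, c) \in E$, summing over the first coordinate gives $e(B, C) = \sum_{v \in B} d_C(v)$. Subtracting the main term then exhibits the error as a sum over $B$ of the centered $C$-degrees,
\[ e(B, C) - \frac{d|B| |C|}{n} = \sum_{v \in B} \left( d_C(v) - \frac{d|C|}{n} \right). \]

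Next I would apply Cauchy--Schwarz to this sum over the $|B|$ vertices of $B$, which yields
\[ \left| \sum_{v \in B} \left( d_C(v) - \frac{d|C|}{n} \right) \right| \leq |B|^{1/2} \left( \sum_{v \in B} \left( d_C(v) - \frac{d|C|}{n} \right)^2 \right)^{1/2}. \]
The crucial observation is that, since each summand on the right is nonnegative, the domain of summation may be enlarged from $B$ to the whole vertex set $V$ without decreasing the sum. This turns the inner sum into exactly the quantity controlled by Theorem \ref{edge-t-sesb} applied with $U = C$, so that
\[ \sum_{v \in B} \left( d_C(v) - \frac{d|C|}{n} \right)^2 \leq \sum_{v \in V} \left( d_C(v) - \frac{d|C|}{n} \right)^2 < \lambda^2 |C|. \]

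Combining the two previous displays gives $\left| e(B, C) - d|B| |C| / n \right| \leq |B|^{1/2} \cdot \lambda |C|^{1/2} = \lambda \sqrt{|B| |C|}$, which is the asserted bound. I do not expect any genuine obstacle here, as the result is indeed an easy corollary; the only point that requires care is the bookkeeping of the argument. One sums the centered $C$-degrees over the vertices of $B$, yet invokes Theorem \ref{edge-t-sesb} with $U = C$, and the step that reconciles these two choices is precisely the enlargement of the summation index set from $B$ to $V$. That enlargement is legitimate only because the terms being added are squares, so identifying this as the load-bearing move is the essential (and sole) subtlety in an otherwise routine deduction.
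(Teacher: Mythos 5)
Your proposal is correct and follows exactly the route the paper intends: the corollary is quoted from \cite[Corollary 9.2.5]{as}, whose standard proof is precisely your argument --- write $e(B,C)=\sum_{v\in B}d_C(v)$, apply Cauchy--Schwarz over $B$, enlarge the index set of the sum of squares from $B$ to $V$, and invoke Theorem \ref{edge-t-sesb} with $U=C$. Your identification of the enlargement step (valid because the summands are squares) as the load-bearing move is exactly right, and nothing further is needed.
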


We also need a similar result for number of paths of length two in $(n, d,
\lambda)$-graphs.

\begin{lemma}\label{path-sesb}
  Let $G = (V, E)$ be an $(n, d, \lambda)$-graph. For any two subsets $B, C
  \subset V$, let $p_2 (B, C)$ be the number of paths of length two with the
  midpoint in $B$ and two endpoints in $C$ (i.e. the number of triples $(c_1,
  b, c_2)$ such that $b \in B$, $c_1, c_2 \in C$, $(b, c_1), (b, c_2) \in E
  (G)$). We have
  \[ \left| p_2 (B, C) - \left( \frac{d}{n} \right)^2 |B| |C|^2 \right| \leq 2
     \frac{\lambda d}{n} |B|^{1 / 2} |C|^{3 / 2} + \lambda^2 |C|. \]
\end{lemma}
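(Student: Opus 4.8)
The plan is to reduce the path count to a sum of squared restricted degrees and then control it using the degree-concentration estimate of Theorem~\ref{edge-t-sesb}. First I would record the exact counting identity
\[ p_2 (B, C) = \sum_{b \in B} d_C (b)^2 . \]
Indeed, a triple $(c_1, b, c_2)$ contributing to $p_2 (B, C)$ is obtained by fixing a midpoint $b \in B$ and then choosing the two endpoints $c_1$ and $c_2$ independently among the neighbours of $b$ lying in $C$; since there are $d_C (b)$ such neighbours and $c_1, c_2$ range over them independently (and may coincide), each $b$ contributes $d_C (b)^2$ ordered triples.

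Next I would write $d_C (b) = d |C| / n + f (b)$ with $f (b) := d_C (b) - d |C| / n$, and expand. Summing $d_C (b)^2$ over $b \in B$ splits into a main term $|B| \, (d |C| / n)^2 = (d / n)^2 |B| \, |C|^2$, a cross term $2 (d |C| / n) \sum_{b \in B} f (b)$, and a nonnegative quadratic remainder $\sum_{b \in B} f (b)^2$. The estimate then follows once I bound the last two pieces. For the remainder, extending the sum from $B$ to all of $V$ (which only enlarges it, as every term is nonnegative) and invoking Theorem~\ref{edge-t-sesb} with $U = C$ gives $\sum_{b \in B} f (b)^2 \le \sum_{v \in V} f (v)^2 < \lambda^2 |C|$, which is precisely the second term of the claimed bound.

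For the cross term I would apply the Cauchy--Schwarz inequality together with the same estimate: $\bigl| \sum_{b \in B} f (b) \bigr| \le |B|^{1 / 2} \bigl( \sum_{b \in B} f (b)^2 \bigr)^{1 / 2} \le \lambda \, |B|^{1 / 2} |C|^{1 / 2}$, so that the cross term is at most $2 (d |C| / n) \lambda |B|^{1 / 2} |C|^{1 / 2} = 2 (\lambda d / n) |B|^{1 / 2} |C|^{3 / 2}$ in absolute value. (Alternatively, one observes $\sum_{b \in B} f (b) = e (B, C) - d |B| |C| / n$ and quotes Corollary~\ref{edge-c-sesb} directly, with the identical outcome.) A single application of the triangle inequality then assembles the two bounds into the stated inequality. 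I do not expect any genuine obstacle: the entire argument rests on Theorem~\ref{edge-t-sesb}, and the only points demanding a little care are the harmless extension of the defect sum from $B$ to $V$ and the correct handling of the sign of the cross term, both of which are dispatched by passing to absolute values.
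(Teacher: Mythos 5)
Your proposal is correct and follows essentially the same route as the paper: both reduce $p_2(B,C)$ to $\sum_{b \in B} d_C(b)^2$, expand around the mean degree $d|C|/n$, bound the quadratic defect by extending the sum to all of $V$ and applying Theorem~\ref{edge-t-sesb}, and control the cross term by a bound of the form $\lambda\sqrt{|B||C|}$ (the paper quotes Corollary~\ref{edge-c-sesb}, which is exactly the Cauchy--Schwarz step you carry out by hand, and which you also note as the alternative). The only difference is presentational: the paper keeps the expanded square as a single inequality and substitutes, while you name the defect $f(b)$ and split into main, cross, and remainder terms.
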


\begin{proof}
  It follows from Theorem \ref{edge-t-sesb} that
  \begin{equation} \sum_{v \in B} (|d_C (v) - \frac{d}{n} |C|)^2 \leq \sum_{v \in V} (|d_C
     (v) - \frac{d}{n} |C|)^2 \leq \lambda^2 |C|, \end{equation}
  which implies that
  \begin{equation}\label{e1-sesb} \left| \sum_{v \in B} (N_C (v))^2 + \left( \frac{d}{n} \right)^2 |B||C|^2
     - \frac{2 d}{n} |C| \sum_{v \in B} N_C (v) \right| \leq \lambda^2 |C|. \end{equation}
  From Corollary \ref{edge-c-sesb}, we have
  \begin{equation}\label{e2-sesb} \left| \sum_{v \in B} N_C^{} (v) - \frac{d}{n} |B| |C| \right| \leq
     \lambda \sqrt{|B| |C|}, \end{equation}
  Putting (\ref{e1-sesb}) and (\ref{e2-sesb}) together, we have
  \[ \left| \sum_{v \in B} (N_C (v))^2 - \left( \frac{d}{n} \right)^2 |B|
     |C|^2 \right| \leq 2 \frac{\lambda d}{n} |B|^{1 / 2} |C|^{3 / 2} +
     \lambda^2 |C|,\]
  completing the proof of the lemma.
\end{proof}

\section{Complete bipartite subgraphs - Proof of Theorem \ref{bipartite-sesb}}

Let $U_1, U_2$ \ be any subsets of $V = V (G)$. For any $y_1, \ldots, y_t \in
U_2$, let
\[ \mathcal{S}_{y_1, \ldots, y_t} (U_1) =\{x \in U_1 : (x, y_i) \in E (G), 1
   \leq i \leq t\}, \]
\[ \tmmathbf{S}_{y_1, \ldots, y_t} (U_1) = |\mathcal{S}_{y_1, \ldots, y_t}
   (U_1) |. \]
When $\tmmathbf{S}_{y_1, \ldots, y_k} (U_1) \geq 1$, we say that the base
$(y_1, \ldots, y_k)$ is extendable to $k$-stars with roots in $U_1$. In order
to make our inductive argument work, we will need the following definition.

\begin{definition}
  Let $f, g, h$ be any three functions on the same variables. We say that
  \[ f = \tilde{o} (g, h) \]
  if $f = o (g)$ when $h = o (g)$, and $f = O (h)$ otherwise.
\end{definition}

Theorem \ref{bipartite-sesb} for the star $K_{1, t}$ follows immediately from the following
estimate.

\begin{lemma}\label{star-sesb}
  Let $G = (V, E)$ be an $(n, d, \lambda)$-graph. For any $t \geq 1$ and two subsets $U_1, U_2 \subset V$, we have
  \[ \sum_{y_1, \ldots, y_t \in U_2} \tmmathbf{S}_{y_1, \ldots, y_t} (U_1) =
     \left( \frac{d}{n} \right)^t |U_1 | |U_2 |^t + \tilde{o} \left( \left(
     \frac{d}{n} \right)^t |U_1 | |U_2 |^t, \frac{n \lambda^2}{d} |U_2 |^{t -
     1} \right) . \]
\end{lemma}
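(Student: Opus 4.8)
The plan is to recast the left-hand side as a single power-moment and then induct on $t$. Since in $\tmmathbf{S}_{y_1,\dots,y_t}(U_1)=\sum_{x\in U_1}\prod_{i=1}^t\mathbbm{1}[(x,y_i)\in E]$ the leaves $y_1,\dots,y_t$ range independently over $U_2$, interchanging the order of summation gives
\[ \sum_{y_1,\dots,y_t\in U_2}\tmmathbf{S}_{y_1,\dots,y_t}(U_1)=\sum_{x\in U_1}\Big(\sum_{y\in U_2}\mathbbm{1}[(x,y)\in E]\Big)^{t}=\sum_{x\in U_1}d_{U_2}(x)^{t}=:S_t. \]
Thus it suffices to estimate the $t$-th moment $S_t$, whose main term should be $(d/n)^t|U_1||U_2|^t$ because $d_{U_2}(x)$ is typically close to $\mu:=(d/n)|U_2|$.

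For the base case $t=1$ we have $S_1=e(U_1,U_2)$, and Corollary \ref{edge-c-sesb} gives $\big|S_1-\mu|U_1|\big|\le\lambda(|U_1||U_2|)^{1/2}$; a direct comparison shows this error equals $\tilde{o}((d/n)|U_1||U_2|,\,n\lambda^2/d)$, which is the claim at $t=1$. For the inductive step ($t\ge2$) I would write $d_{U_2}(x)=\mu+\delta_x$ and split
\[ S_t=\mu\,S_{t-1}+\sum_{x\in U_1}\delta_x\,d_{U_2}(x)^{t-1}. \]
Writing $M_{t-1}=(d/n)^{t-1}|U_1||U_2|^{t-1}$ and $E_{t-1}=(n\lambda^2/d)|U_2|^{t-2}$, the inductive hypothesis $S_{t-1}=M_{t-1}+\tilde{o}(M_{t-1},E_{t-1})$ makes the first term produce exactly the target main term, since $\mu M_{t-1}=M_t$ and $\mu E_{t-1}=\lambda^2|U_2|^{t-1}\le E_t$.

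For the remainder I would apply Cauchy--Schwarz,
\[ \Big|\sum_{x\in U_1}\delta_x\,d_{U_2}(x)^{t-1}\Big|\le\Big(\sum_{x\in U_1}\delta_x^2\Big)^{1/2}\Big(\sum_{x\in U_1}d_{U_2}(x)^{2(t-1)}\Big)^{1/2}, \]
bound $\sum_{x}\delta_x^2<\lambda^2|U_2|$ by Theorem \ref{edge-t-sesb}, and --- this is the step that makes the induction close --- use the crude inequality $d_{U_2}(x)^{2(t-1)}\le|U_2|^{t-1}d_{U_2}(x)^{t-1}$, so that $\sum_x d_{U_2}(x)^{2(t-1)}\le|U_2|^{t-1}S_{t-1}$ is controlled by the very $(t-1)$-th moment already estimated. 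Using $S_{t-1}=O(M_{t-1}+E_{t-1})$, this yields a remainder of size $\lambda(d/n)^{(t-1)/2}|U_1|^{1/2}|U_2|^{t-1/2}$ together with a piece bounded by $E_t$.

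It then remains to verify that the main remainder is $\tilde{o}(M_t,E_t)$. Its ratio to $M_t$ is a constant multiple of $\lambda(n/d)^{(t+1)/2}(|U_1||U_2|)^{-1/2}$, which tends to $0$ precisely when $|U_1||U_2|\gg\lambda^2(n/d)^{t+1}$ --- and this is exactly the condition $E_t=o(M_t)$; in the complementary range the same computation shows the remainder is $O(E_t)$. I expect the main obstacle to be this two-regime bookkeeping: one must check that the threshold $|U_1||U_2|\asymp\lambda^2(n/d)^{t+1}$ at which the two branches of $\tilde{o}(M_t,E_t)$ switch is inherited consistently from step $t-1$, and the crude moment bound above is what allows the induction to proceed without ever requiring a $(2t-2)$-th moment estimate.
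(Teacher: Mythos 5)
Your proposal is correct, and it reaches the estimate by a genuinely different route than the paper. The paper also inducts on $t$ with the same base case, but its inductive step peels off one leaf: it writes $\sum_{y_1, \ldots, y_t \in U_2} \tmmathbf{S}_{y_1, \ldots, y_t} (U_1) = \sum_{y_1, \ldots, y_{t-1} \in U_2} e (U_2, \mathcal{S}_{y_1, \ldots, y_{t-1}} (U_1))$ and applies the mixing bound of Corollary \ref{edge-c-sesb} separately to each of the $|U_2|^{t-1}$ neighborhood sets $\mathcal{S}_{y_1, \ldots, y_{t-1}} (U_1)$, converting each per-tuple error $\lambda \sqrt{|U_2| \tmmathbf{S}_{y_1, \ldots, y_{t-1}} (U_1)}$ into $\tilde{o}$-form (this is the geometric-mean observation, the paper's equation preceding the induction) and then summing via ``additivity'' of $\tilde{o}$. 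You instead collapse the whole sum into the degree-moment identity $\sum_{x \in U_1} d_{U_2} (x)^t$, split $d_{U_2}(x) = \mu + \delta_x$, and control the deviation term with a single global Cauchy--Schwarz, the variance bound of Theorem \ref{edge-t-sesb}, and the crude domination $d_{U_2}(x)^{2(t-1)} \leq |U_2|^{t-1} d_{U_2}(x)^{t-1}$, which lets the induction reuse the $(t-1)$-th moment rather than requiring a higher one. Your leading error term $\lambda (d/n)^{(t-1)/2} |U_1|^{1/2} |U_2|^{t - 1/2}$ is exactly the geometric mean of the main term $M_t$ and the comparison function $E_t$, so your two-regime check is the same geometric-mean fact the paper uses, applied once at the global scale instead of once per $(t-1)$-tuple. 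What your version buys: the mixing lemma is never applied to sets of varying (possibly very small) size, and the threshold bookkeeping is transparent because the error is literally $\sqrt{M_t E_t}$. What the paper's buys: brevity, given that Corollary \ref{edge-c-sesb} is already on hand. Both arguments share the same looseness inherent in the two-branch $\tilde{o}$ notation (reading ``not $o(g)$'' as ``at least a constant multiple of $g$'' in the switch between branches), so you are no worse off there than the paper is.
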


\begin{proof}
  The proof proceeds by induction. The base case $t = 1$ follows immediately
  from Corollary \ref{edge-c-sesb} and the fact that
  \begin{equation}\label{e3-sesb} \lambda \sqrt{|U_1 | |U_2 |} = \tilde{o} \left( \frac{d}{n} |U_1 | |U_2
     |, \frac{n \lambda^2}{d} \right) . \end{equation}
  Suppose that the claim holds for $t - 1 \geq 1$, we show that it also holds
  for $t$. Note that
  \[ \sum_{y_1, \ldots, y_t \in U_2} \tmmathbf{S}_{y_1, \ldots, y_t} (U_1) =
     \sum_{y_1, \ldots, y_{t - 1} \in U_2} e (U_2, \mathcal{S}_{y_1, ., y_{t -
     1}} (U_1)) . \]
  Hence, it follows from Corollary \ref{edge-c-sesb} and (\ref{e3-sesb}) that
  \begin{eqnarray}
    &  & \sum_{y_1, \ldots, y_t \in U_2} \tmmathbf{S}_{y_1, \ldots, y_t}
    (U_1) = \sum_{y_1, \ldots, y_{t - 1} \in U_2} \left( \frac{d|U_2
    |\tmmathbf{S}_{y_1, ., y_{t - 1}} (U_1)}{n} + O \left( \lambda \sqrt{|U_2
    |\tmmathbf{S}_{y_1, ., y_{t - 1}} (U_1)} \right) \right) \nonumber\\
    &  & = \sum_{y_1, \ldots, y_{t - 1} \in U_2} \left( \frac{d|U_2
    |\tmmathbf{S}_{y_1, ., y_{t - 1}} (U_1)}{n} + \tilde{o} \left( \frac{d|U_2
    |\tmmathbf{S}_{y_1, ., y_{t - 1}} (U_1)}{n}, \frac{n \lambda^2}{d} \right)
    \right) . \label{e4-sesb}
  \end{eqnarray}
  By induction hypothesis,
  \begin{equation}\label{e5-sesb} \sum_{y_1, \ldots, y_{t - 1} \in U_2} \tmmathbf{S}_{y_1, ., y_{t - 1}}
     (U_1) = \left( \frac{d}{n} \right)^{t - 1} |U_1 | |U_2 |^{t - 1} +
     \tilde{o} \left( \left( \frac{d}{n} \right)^{t - 1} |U_1 | |U_2 |^{t -
     1}, \frac{n \lambda^2}{d} |U_2 |^{t - 2} \right) . \end{equation}
  The lemma follows immediately from (\ref{e4-sesb}), (\ref{e5-sesb}) and the additivity of function
  $\tilde{o}$.
\end{proof}

We now give a full proof of Theorem \ref{bipartite-sesb}. Let $U_1, U_2$ \ be any subsets of $V
= V (G)$. For any $y_1, \ldots, y_t \in U_2$, let
\[ \mathcal{K}_{y_1, \ldots, y_t}^s (U_1) =\{x_1, \ldots, x_s \in U_1 : (x_j,
   y_i) \in E (G), 1 \leq j \leq s, 1 \leq i \leq t\}, \]
\[ \tmmathbf{K}_{y_1, \ldots, y_t}^s (U) = |\mathcal{K}_{y_1, \ldots, y_t}^s
   (U) |, \]
then it is clear that
\[ \tmmathbf{K}_{y_1, \ldots, y_t}^s (U) = (\tmmathbf{S}_{y_1, \ldots, y_t}
   (U))^s . \]
Similarly, when $\tmmathbf{K}^s_{y_1, \ldots, y_k} (U_1) \geq 1$, we say that
the base $(y_1, \ldots, y_k)$ is extendable to $K_{s, t}$ graphs with the $s$-parts in
$U_1$. Theorem \ref{bipartite-sesb} follows immediately from the following estimate.

\begin{lemma}\label{bipartite-lemma-sesb}
  Let $G = (V, E)$ be an $(n, d, \lambda)$-graph. For any $t \geq s \geq 1$ and two subsets $U_1, U_2 \subset V$, we have
  \[ \sum_{y_1, \ldots, y_t \in U_2} \tmmathbf{K}^s_{y_1, \ldots, y_t} (U_1) =
     \left( \frac{d}{n} \right)^{t s} |U_1 |^s |U_2 |^t + \tilde{o} \left(
     \left( \frac{d}{n} \right)^{t s} |U_1 |^s |U_2 |^t, \left( \frac{n}{d}
     \right)^{t^2} \lambda^{2 t} |U_1 |^{s - t} \right) . \]
\end{lemma}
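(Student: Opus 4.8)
The plan is to prove the estimate by induction on $s$, establishing it for all $t \geq s$ simultaneously; the base case $s = 1$ is exactly Lemma \ref{star-sesb}. The starting point is a double-counting identity: writing $\tmmathbf{K}^s_{y_1,\ldots,y_t}(U_1) = (\tmmathbf{S}_{y_1,\ldots,y_t}(U_1))^s$ as the number of ordered tuples $(x_1,\ldots,x_s) \in U_1^s$ with every $x_j$ adjacent to every $y_i$, and then swapping the order of summation over the $x$'s and the $y$'s. Since the $y_i$ range independently, this yields
\[
\sum_{y_1,\ldots,y_t \in U_2} \tmmathbf{K}^s_{y_1,\ldots,y_t}(U_1) = \sum_{x_1,\ldots,x_s \in U_1} \bigl(T(x_1,\ldots,x_s)\bigr)^t,
\]
where $T(x_1,\ldots,x_s)$ is the number of $y \in U_2$ adjacent to all of $x_1,\ldots,x_s$. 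This makes the roles of $(s,U_1)$ and $(t,U_2)$ symmetric and puts the sum in a form amenable to peeling off one root at a time.

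For the inductive step I would fix $x_1,\ldots,x_{s-1}$ and let $C = C(x_1,\ldots,x_{s-1}) \subseteq U_2$ be their common neighborhood, so that $T(x_1,\ldots,x_s) = d_C(x_s)$ and $|C| = T(x_1,\ldots,x_{s-1})$. The inner sum $\sum_{x_s \in U_1} d_C(x_s)^t$ counts $t$-stars with center in $U_1$ and leaves in $C$, which is precisely the quantity controlled by Lemma \ref{star-sesb} applied to the pair $(U_1,C)$ in place of $(U_1,U_2)$: it equals $(d/n)^t |U_1| |C|^t$ with error $\tilde{o}\bigl((d/n)^t |U_1| |C|^t,\ \tfrac{n\lambda^2}{d}|C|^{t-1}\bigr)$. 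Summing over $x_1,\ldots,x_{s-1} \in U_1$ and using additivity of $\tilde{o}$ gives
\[
\sum_{y_1,\ldots,y_t \in U_2} \tmmathbf{K}^s_{y_1,\ldots,y_t}(U_1) = \left(\frac{d}{n}\right)^t |U_1| \sum_{x_1,\ldots,x_{s-1}} |C|^t + \tilde{o}\left(\left(\frac{d}{n}\right)^t |U_1| \sum_{x_1,\ldots,x_{s-1}} |C|^t,\ \frac{n\lambda^2}{d}\sum_{x_1,\ldots,x_{s-1}} |C|^{t-1}\right).
\]
By the identity above, $\sum_{x_1,\ldots,x_{s-1}} |C|^t = \sum_{x_1,\ldots,x_{s-1}}(T(x_1,\ldots,x_{s-1}))^t$ is the same quantity with $s$ replaced by $s-1$, so the induction hypothesis for $(s-1,t)$ turns the leading part into $(d/n)^{ts}|U_1|^s|U_2|^t$, the desired main term.

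The main obstacle is the $\tilde{o}$ bookkeeping: checking that the accumulated errors lie inside the claimed secondary bound $(n/d)^{t^2}\lambda^{2t}|U_1|^{s-t}$. The contribution from the secondary term of the $(s-1,t)$ hypothesis is $(d/n)^t |U_1| \cdot (n/d)^{t^2}\lambda^{2t}|U_1|^{s-1-t} = (n/d)^{t^2-t}\lambda^{2t}|U_1|^{s-t}$, which is within target since $n/d \geq 1$. The delicate term is the auxiliary sum $\frac{n\lambda^2}{d}\sum_{x_1,\ldots,x_{s-1}}(T(x_1,\ldots,x_{s-1}))^{t-1}$; this is the same quantity with parameters $(s-1,t-1)$, and since $t-1 \geq s-1$ it is covered by the $s-1$ case of the induction. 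One then estimates it and verifies that $\frac{n\lambda^2}{d}$ times its main and secondary parts is $\tilde{o}$ of the $(s,t)$ main term against the target bound, the relevant comparison being with the threshold $|U_1||U_2| \gtrsim \lambda^2 (n/d)^{t+s}$. It is also worth recording that the base case $s=1$ is consistent with the stated (tighter) secondary bound: with $R = \lambda^2 (n/d)^{t+1}/(|U_1||U_2|)$, Lemma \ref{star-sesb} furnishes an error of type $\tilde{o}(g, gR)$ while the claim requires $\tilde{o}(g, gR^t)$, and since $gR \leq gR^t$ exactly when $R \geq 1$, the former implies the latter. Apart from these comparisons, which the two-argument $\tilde{o}$ formalism is designed to streamline, the argument is routine, and Theorem \ref{bipartite-sesb} then follows by passing from ordered tuples to unordered copies (dividing by $s!\,t!$ and absorbing the degenerate tuples with repeated vertices into the error).
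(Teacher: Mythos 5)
Your proposal is correct, and its skeleton is the same as the paper's: induction on $s$, the double-counting identity that turns $\sum_{y_1,\ldots,y_t}\tmmathbf{K}^s_{y_1,\ldots,y_t}(U_1)$ into a sum over $(s-1)$-tuples in $U_1$ of star-counts over their common neighborhood $C\subset U_2$, and an application of Lemma \ref{star-sesb} to the pair $(U_1,C)$, followed by the induction hypothesis at $(s-1,t)$ for the leading part. Where you genuinely diverge is the treatment of the accumulated cross term $\frac{n\lambda^2}{d}\sum_{x_1,\ldots,x_{s-1}}|C|^{t-1}$: the paper controls it with H\"older's inequality, bounding it by the geometric mean $\bigl(\sum (d/n)^t|U_1|\,|C|^t\bigr)^{(t-1)/t}\bigl((n/d)^{t^2}\lambda^{2t}|U_1|^{s-t}\bigr)^{1/t}$ and declaring it $\tilde{o}$ of the two comparands, whereas you recognize it (via the same double-counting identity) as the lemma's quantity at parameters $(s-1,t-1)$ — legitimately covered by the induction hypothesis since $t-1\geq s-1$ — and then compare ratios against the target. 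Your route does work: writing $r=\lambda^2(n/d)^{t+s}/(|U_1||U_2|)$, the main part $P$ of that term satisfies $P/M=r$ while the target satisfies $H/M=r^t$ (with $M$ the main term of the lemma), so $P=M^{(t-1)/t}H^{1/t}$, which is exactly the geometric-mean relation that H\"older exploits; and the secondary part is at most $(n/d)^{(t-1)^2+1}\lambda^{2t}|U_1|^{s-t}\leq H$. Likewise your base-case conversion from $\tilde{o}(g,gR)$ to $\tilde{o}(g,gR^t)$ is the paper's Young's-inequality step in disguise ($gR=g^{(t-1)/t}(gR^t)^{1/t}$). In short: your argument trades one use of H\"older for one extra invocation of the induction hypothesis plus a direct ratio check; both versions carry the same mild looseness inherent in the paper's two-argument $\tilde{o}$ formalism (the regime where the secondary bound is neither $o(g)$ nor bounded below relative to $g$), so that is not a defect of yours specifically.
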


\begin{proof}
  The proof proceeds by induction on $s$. We first consider the base case $s =
  1$. Lemma \ref{star-sesb} for the stars implies that
  \[ \sum_{y_1, \ldots, y_t \in U_2} \tmmathbf{K}^1_{y_1, \ldots, y_t} (U_1) =
     \left( \frac{d}{n} \right)^t |U_1 |^1 |U_2 |^t + \tilde{o} \left( \left(
     \frac{d}{n} \right)^t |U_1 | |U_2 |^t, \frac{n \lambda^2}{q} |U_2 |^{t -
     1} \right) . \]
  By Young's inequality,
  \[ \frac{n \lambda^2}{q} |U_2 |^{t - 1} \leq \frac{t - 1}{t} \left(
     \frac{d}{n} \right)^t |U_1 | |U_2 |^t + \frac{1}{t} \left( \frac{n}{d}
     \right)^{t^2} \lambda^{2 t} |U_1 |^{1 - t}, \]
  which implies that
  \[ \frac{n \lambda^2}{q} |U_2 |^{t - 1} = \tilde{o} \left( \left(
     \frac{d}{n} \right)^t |U_1 | |U_2 |^t, \left( \frac{n}{d} \right)^{t^2}
     \lambda^{2 t} |U_1 |^{1 - t} \right) . \]
  The base case $s = 1$ follows. Suppose that the claim holds for $s -
  1 \geq 1$, we show that it also holds for $s$. Note that
  \begin{eqnarray}
    \sum_{y_1, \ldots, y_t \in U_2} \tmmathbf{K}^s_{y_1, \ldots, y_t} (U_1) &
    = & \sum_{z_1, \ldots, z_s \in U_1} \tmmathbf{K}_{z_1, \ldots, z_s}^t
    (U_2) \nonumber\\
    & = & \sum_{z_1, \ldots, z_{s - 1} \in U_1} \sum_{y_1, \ldots, y_t \in
    \mathcal{S}_{z_1, \ldots, z_{s - 1}} (U_2)} \tmmathbf{S}_{y_1, \ldots,
    y_t} (U_1), \label{e6-sesb}
  \end{eqnarray}
  because both sides equal the number of ordered (possibly degenerate) $K_{s,
  t}$ in $G [U_1, U_2]$. It follows from Lemma \ref{star-sesb} that
  \begin{eqnarray}
    \sum_{y_1, \ldots, y_t \in \mathcal{S}_{z_1, \ldots, z_{s - 1}} (U_2)}
    \tmmathbf{S}_{y_1, \ldots, y_t} (U_1) & = & \left( \frac{d}{n} \right)^t
    |U_1 | (\tmmathbf{S}_{z_1, \ldots, z_{s - 1}} (U_2))^t \nonumber\\
    &  & + \tilde{o} \left( \left( \frac{d}{n} \right)^t |U_1 |
    (\tmmathbf{S}_{z_1, \ldots, z_{s - 1}} (U_2))^t, \frac{n \lambda^2}{d}
    (\tmmathbf{S}_{z_1, \ldots, z_{s - 1}} (U_2))^{t - 1} \right) . \label{e7-sesb}
  \end{eqnarray}
  By H\"older's inequality,
  \begin{eqnarray}
    &  & \sum_{z_1, \ldots, z_{s - 1} \in U_1} \frac{n \lambda^2}{d}
    (\tmmathbf{S}_{z_1, \ldots, z_{s - 1}} (U_2))^{t - 1} \nonumber\\
    & \leq & \left( \sum_{z_1, \ldots, z_{s - 1} \in U_1} \left( \frac{d}{n}
    \right)^t |U_1 | (\tmmathbf{S}_{z_1, \ldots, z_{s - 1}} (U_2))^t
    \right)^{(t - 1) / t} \left( \sum_{z_1, \ldots, z_{s - 1} \in U_1} q^{t^2}
    \lambda^{2 t} |U_1 |^{- (t - 1)} \right)^{1 / t} \nonumber\\
    & = & \tilde{o} \left( \sum_{z_1, \ldots, z_{s - 1} \in U_1} \left(
    \frac{d}{n} \right)^t |U_1 | (\tmmathbf{S}_{z_1, \ldots, z_{s - 1}}
    (U_2))^t, \left( \frac{n}{d} \right)^{t^2} \lambda^{2 t} |U_1 |^{s - t}
    \right) . \nonumber
  \end{eqnarray}
  Hence,
  \begin{equation}\label{e8-sesb} \sum_{z_1, \ldots, z_{s - 1} \in U_1} \frac{n \lambda^2}{d}
     (\tmmathbf{S}_{z_1, \ldots, z_{s - 1}} (U_2))^{t - 1} = \tilde{o} \left(
     \sum_{z_1, \ldots, z_{s - 1} \in U_1} \left( \frac{d}{n} \right)^t |U_1 |
     (\tmmathbf{S}_{z_1, \ldots, z_{s - 1}} (U_2))^t, q^{t^2} \lambda^{2 t}
     |U_1 |^{s - t} \right) . \end{equation}
  Besides, by induction hypothesis,
  \begin{eqnarray}
    &  & \sum_{z_1, \ldots, z_{s - 1} \in U_1} \left( \frac{d}{n} \right)^t
    |U_1 | (\tmmathbf{S}_{z_1, \ldots, z_{s - 1}} (U_2))^t = \left(
    \frac{d}{n} \right)^t |U_1 | \sum_{z_1, \ldots, z_{s - 1} \in U_1}
    \tmmathbf{K}^t_{z_1, \ldots, z_{s - 1}} (U_2) \nonumber\\
    & = & \left( \frac{d}{n} \right)^t |U_1 | \left\{ \left( \frac{d}{n}
    \right)^{(s - 1) t} |U_1 |^{t - 1} |U_2 |^s + \tilde{o} \left( \left(
    \frac{d}{n} \right)^{(s - 1) t} |U_1 |^{t - 1} |U_2 |^s, \left(
    \frac{n}{d} \right)^{t^2} \lambda^{2 t} |U_1 |^{s - 1 - t} \right)
    \right\} \nonumber\\
    & = & \left( \frac{d}{n} \right)^{s t} |U_1 |^t |U_2 |^s + \tilde{o} 
    \left( \left( \frac{d}{n} \right)^{s t} |U_1 |^t |U_2 |^s, \left(
    \frac{n}{d} \right)^{t (t - 1)} \lambda^{2 t} |U_1 |^{s - t} \right) .
    \label{e9-sesb}
  \end{eqnarray}
  Putting (\ref{e6-sesb}), (\ref{e7-sesb}), (\ref{e8-sesb}) and (\ref{e9-sesb}) together, the lemma follows.
\end{proof}

\section{Proof of Theorem \ref{book-sesb}}

Theorem \ref{book-sesb} follows immediately from the following estimate on the number of
$K_{2,t}$ subgraphs in $(n, d, \lambda)$-graphs.

\begin{lemma}\label{book-lemma-sesb}
  Let $G = (V, E)$ be an $(n, d, \lambda)$-graph. For any $t \geq 1$ and two subsets $U_1, U_2 \subset V$, we have
  \[ \sum_{y_1, \ldots, y_t \in U_2} \tmmathbf{K}^2_{y_1, \ldots, y_t} (U_1) =
     \left( \frac{d}{n} \right)^{2 t} |U_1 |^2 |U_2 |^t + \tilde{o} \left(
     \left( \frac{d}{n} \right)^{2 t} |U_1 |^2 |U_2 |^t, \lambda^4 \left(
     \frac{n}{d} \right)^2 |U_2 |^{t - 2} \right) . \]
\end{lemma}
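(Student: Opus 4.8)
The plan is to prove Lemma \ref{book-lemma-sesb} by exploiting the special structure of $K_{2,t}$: such a subgraph consists of $t$ "top" vertices $y_1,\ldots,y_t$ in $U_2$ that all attach to a common pair of "bottom" vertices $\{x_1,x_2\}\subset U_1$. Since $\tmmathbf{K}^2_{y_1,\ldots,y_t}(U_1)=(\tmmathbf{S}_{y_1,\ldots,y_t}(U_1))^2$, the quantity we must estimate is $\sum_{y_1,\ldots,y_t\in U_2}(\tmmathbf{S}_{y_1,\ldots,y_t}(U_1))^2$. My first move would be to reverse the roles of the two parts exactly as in the proof of Lemma \ref{bipartite-lemma-sesb}, writing
\[
\sum_{y_1,\ldots,y_t\in U_2}\tmmathbf{K}^2_{y_1,\ldots,y_t}(U_1)
=\sum_{z_1,z_2\in U_1}\tmmathbf{K}^t_{z_1,z_2}(U_2)
=\sum_{z_1,z_2\in U_1}\bigl(\tmmathbf{S}_{z_1,z_2}(U_2)\bigr)^t,
\]
so that it suffices to understand the distribution of the codegree $\tmmathbf{S}_{z_1,z_2}(U_2)=|N(z_1)\cap N(z_2)\cap U_2|$ as $(z_1,z_2)$ ranges over $U_1\times U_1$.

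The key step is then to control this codegree sum using the path-counting estimate in Lemma \ref{path-sesb} rather than iterating Corollary \ref{edge-c-sesb}. The diagonal contribution $z_1=z_2$ gives $\sum_{z\in U_1}(\tmmathbf{S}_{z,z}(U_2))^t=\sum_{z\in U_1}(d_{U_2}(z))^t$, which is lower order and can be absorbed into the error; the main term comes from the off-diagonal pairs. To handle the off-diagonal sum I would first establish control on the \emph{second moment} of the codegree, namely $\sum_{z_1,z_2\in U_1}(\tmmathbf{S}_{z_1,z_2}(U_2))^2$, since the number of paths of length two counted by $p_2(\cdot,\cdot)$ in Lemma \ref{path-sesb} is precisely a codegree sum of this shape. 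Lemma \ref{path-sesb} gives
\[
\sum_{z_1,z_2\in U_1}\tmmathbf{S}_{z_1,z_2}(U_2)=p_2(U_2,U_1)
=\Bigl(\tfrac{d}{n}\Bigr)^2|U_2||U_1|^2+\tilde o\bigl(\cdots\bigr),
\]
and a second application bounds the spread of $\tmmathbf{S}_{z_1,z_2}(U_2)$ about its mean $(d/n)^2|U_2|$. Once the codegree is shown to concentrate, raising to the power $t$ and summing over the $|U_1|^2$ pairs yields the main term $(d/n)^{2t}|U_1|^2|U_2|^t$, with the fluctuations producing the stated error $\lambda^4(n/d)^2|U_2|^{t-2}$ after an application of the convexity/power-mean (H\"older) estimates in the spirit of Lemma \ref{bipartite-lemma-sesb}.

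The main obstacle I anticipate is the bookkeeping for the error term when passing from the first and second moments of the codegree to the $t$-th power sum. For $t\geq 3$ one cannot simply square, so I would use H\"older's inequality to interpolate $\sum(\tmmathbf{S}_{z_1,z_2}(U_2))^t$ against the main term and the second-moment deviation, exactly as the chain (\ref{e7-sesb})--(\ref{e9-sesb}) does in the previous section, and then verify via Young's inequality that the resulting bound matches $\tilde o\bigl((d/n)^{2t}|U_1|^2|U_2|^t,\ \lambda^4(n/d)^2|U_2|^{t-2}\bigr)$. The payoff of basing the estimate on Lemma \ref{path-sesb} instead of a naive induction is that the codegree concentration contributes a $\lambda^4$ (rather than $\lambda^{2t}$) error, which is what improves the bound in Theorem \ref{book-sesb} over the general Theorem \ref{bipartite-sesb}; keeping track of this gain through the H\"older step, and confirming the additivity of $\tilde o$ absorbs the diagonal term, is where I expect the delicate part of the argument to lie.
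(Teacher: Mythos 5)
Your opening identity $\sum_{y_1,\ldots,y_t\in U_2}\tmmathbf{K}^2_{y_1,\ldots,y_t}(U_1)=\sum_{z_1,z_2\in U_1}(\tmmathbf{S}_{z_1,z_2}(U_2))^t$ is correct, and it is the same swap the paper uses in (\ref{e6-sesb}); but the key step of your plan rests on a misreading of Lemma \ref{path-sesb}. The quantity $p_2(U_2,U_1)$ counts triples $(z_1,y,z_2)$ with midpoint $y\in U_2$, i.e.
\[
p_2(U_2,U_1)=\sum_{z_1,z_2\in U_1}\tmmathbf{S}_{z_1,z_2}(U_2),
\]
which is the \emph{first} moment of the codegree (equivalently, the second moment of the degrees $d_{U_1}(y)$). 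It is not, and for no choice of $B,C$ in Lemma \ref{path-sesb} does it become, the second moment $\sum_{z_1,z_2\in U_1}(\tmmathbf{S}_{z_1,z_2}(U_2))^2$: that quantity counts ordered $K_{2,2}$'s (closed four-cycles), which is precisely the $t=2$ instance of the lemma you are trying to prove. So your claim that ``a second application bounds the spread of $\tmmathbf{S}_{z_1,z_2}(U_2)$ about its mean'' is circular as stated --- path counts are second-order statistics of the graph, codegree concentration is a fourth-order statistic, and the latter cannot be extracted from the former alone. (A non-circular route to codegree concentration does exist inside the paper's toolkit: apply Theorem \ref{edge-t-sesb} with the varying set $U=\mathcal{S}_{z_1}(U_2)$ for each fixed $z_1$, then again for the degrees $d_{U_2}(z_1)$. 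But you never invoke this, and even with it you would still have to push the concentration through $t$-th powers and handle the diagonal $z_1=z_2$, whose contribution is roughly $|U_1|(d/n)^t|U_2|^t$ and is not automatically negligible; it is only absorbed after a case analysis on the two branches of $\tilde{o}$.)

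What your sketch is missing is the induction on $t$, which is how the paper's proof actually runs. The identity (\ref{e11-sesb}) peels off the last vertex: the sum is rewritten as $\sum_{y_1,\ldots,y_{t-1}\in U_2}\sum_{x_1,x_2\in\mathcal{S}_{y_1,\ldots,y_{t-1}}(U_1)}\mathcal{S}_{x_1,x_2}(U_2)$, Lemma \ref{path-sesb} is applied with the \emph{varying} endpoint set $C=\mathcal{S}_{y_1,\ldots,y_{t-1}}(U_1)$, and the resulting main term $(d/n)^2|U_2|(\tmmathbf{S}_{y_1,\ldots,y_{t-1}}(U_1))^2$ is exactly the quantity controlled by the induction hypothesis at level $t-1$. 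This is also where the $\lambda^4$ (rather than $\lambda^{2t}$) error really comes from: the path lemma is invoked once per level, and only squares --- never $t$-th powers --- of the star counts appear at each stage. Your proposed H\"older/Young interpolation ``as in (\ref{e7-sesb})--(\ref{e9-sesb})'' cannot substitute for this, because that chain in the proof of Lemma \ref{bipartite-lemma-sesb} likewise feeds into an induction hypothesis; without setting up the induction, you have nothing to plug the interpolated bound into.
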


\begin{proof}
  The proof proceeds by induction. It
  follows from Lemma \ref{path-sesb} that
  \[ \sum_{y_1 \in U_2} \tmmathbf{K}^2_{y_1} (U_1) = \left( \frac{d}{n}
     \right)^2 |U_1 |^2 |U_2 | + \tilde{o} \left( \left( \frac{d}{n} \right)^2
     |U_1 |^2 |U_2 |, \lambda^2 |U_1 | \right) . \]
  By the Cauchy-Schwarz inequality,
  \begin{equation}\label{e10-sesb} \lambda^2 |U_1 | = \tilde{o} \left( \left( \frac{d}{n} \right)^2 |U_1
     |^2 |U_2 |, \left( \frac{n}{d} \right)^2 \lambda^4 |U_2 |^{- 1} \right) .
  \end{equation}
  Hence, the base case $t = 1$ follows. Suppose that the claim holds for
  $t$, we show that it also holds for $t + 1$. Note that
  \begin{equation}\label{e11-sesb} \sum_{y_1, \ldots, y_t \in U_2} \tmmathbf{K}^2_{y_1, \ldots, y_t} (U_1) =
     \sum_{y_1, \ldots, y_{t - 1} \in U_2}  \sum_{x_1, x_2 \in
     \mathcal{S}_{y_1, ., y_{t - 1}} (U_1)} \mathcal{S}_{x_1, x_2} (U_2) . \end{equation}
  It follows from Lemma \ref{path-sesb} and (\ref{e10-sesb}) that
  \begin{eqnarray}
    \sum_{x_1, x_2 \in \mathcal{S}_{y_1, ., y_{t - 1}} (U_1)}
    \mathcal{S}_{x_1, x_2} (U_2) & = & \left( \frac{d}{n} \right)^2 |U_2 |
    (\tmmathbf{S}_{y_1, \ldots, y_{t - 1}} (U_1))^2 + \nonumber\\
    & + & \tilde{o} \left( \left( \frac{d}{n} \right)^2 |U_2 |
    (\tmmathbf{S}_{y_1, \ldots, y_{t - 1} (U_1)})^2, \lambda^4 \left(
    \frac{n}{d} \right)^2 |U_2 |^{- 1} \right) \label{e12-sesb}
  \end{eqnarray}
  By induction hypothesis,
  \begin{eqnarray}
    \sum_{y_1, \ldots, y_{t - 1} \in U_2} (\tmmathbf{S}_{y_1, ., y_{t - 1}}
    (U_1))^2 & = & \left( \frac{d}{n} \right)^{2 (t - 1)} |U_1 |^2 |U_2 |^{2
    (t - 1)} \nonumber\\
    &  & + \tilde{o} \left( \left( \frac{d}{n} \right)^{2 t} |U_1 |^2 |U_2
    |^{2 (t - 1)}, \lambda^4 \left( \frac{n}{d} \right)^2 |U_2 |^{t - 3}
    \right) . \label{e13-sesb}
  \end{eqnarray}
  Putting (\ref{e11-sesb}), (\ref{e12-sesb}) and (\ref{e13-sesb}) together, we complete the proof of Lemma \ref{book-lemma-sesb}.
\end{proof}

\section{Complete edge-colored subgraphs}

Suppose that $G = (V, E)$ is an $(n, d, \lambda)$-colored graph. Let $E^{r_i}
(G)$ be the set of $r_i$-colored edges of $G$. Let $U_1, U_2$ \ be any subsets
of $V = V (G)$. For any $t$ colors $r_1, \ldots, r_t$, and for any $y_1,
\ldots, y_t \in U_2$, define
\[ \mathcal{S}_{y_1, \ldots, y_t}^{r_1, \ldots, r_t} (U_1) =\{x \in U_1 : (x,
   y_i) \in E^{r_i} (G), 1 \leq i \leq t\}, \]
\[ \tmmathbf{S}_{y_1, \ldots, y_t}^{r_1, \ldots, r_t} (U_1) =
   |\mathcal{S}_{y_1, \ldots, y_t}^{r_1, \ldots, r_t} (U_1) |, \]
and
\begin{equation*}
 \mathcal{I}_{y_1, \ldots, y_t}^{r_1, \ldots, r_t} (U_1) = \left\{
        \begin{array}{ll}
        1 & \mbox{if} \, \tmmathbf{S}_{y_1, \ldots, y_t}^{r_1, \ldots, r_t} (U_1) \geq 1\\
        0 & \mbox{otherwise.}
    \end{array}  \right.
\end{equation*}
When $\mathcal{I}_{y_1, \ldots, y_t}^{r_1, \ldots, r_t} (U_1) = 1$, we say
that the base $(y_1, \ldots, y_k)$ is extendable to $k$-stars of type $(r_1,
\ldots, r_t)$ with roots in $U_1$. For any $t - 1$ colors $r_1, \ldots, r_{t -
1}$, the following lemma says that if $U_1, U_2$ are two large subsets of the
vertex set of an $(n, d, \lambda)$-colored graph, almost all $t$-tuples
$(y_1, \ldots, y) \in U_2^t$ are extendable to stars of type $(r_1, \ldots,
r_t)$ with roots in $U_1$. This lemma is not necessary for the proof of Theorem
\ref{colored-subgraph-sesb}, but it could be of independent interest.

\begin{lemma}\label{e-pinned-sesb}
  For any $t \geq 2$ and $t$ colors $r_1, \ldots, r_t$, let $G = (V, E)$ be
  an $(n, d, \lambda)$-colored graph, and let $U_1, U_2 \subset V$. Suppose
  that
  \[ |U_1 | |U_2 | \gg \lambda^2 \left( \frac{n}{d} \right)^{t + 1}, \]
  then
  \[ \sum_{y_1, \ldots, y_t \in U_2} \mathcal{I}_{y_1, \ldots, y_t}^{r_1,
     \ldots, r_t} (U_1) = (1 - o (1)) |U_2 |^t . \]
\end{lemma}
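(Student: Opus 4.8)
The plan is to run a second-moment argument on the quantity $S(\tmmathbf{y}) \assign \tmmathbf{S}_{y_1,\ldots,y_t}^{r_1,\ldots,r_t}(U_1)$, where $\tmmathbf{y} = (y_1,\ldots,y_t)$ ranges over $U_2^t$. Since $\mathcal{I}_{y_1,\ldots,y_t}^{r_1,\ldots,r_t}(U_1)$ is exactly the indicator of the event $S(\tmmathbf{y}) \geq 1$, the sum to be estimated equals the number of tuples $\tmmathbf{y}$ with $S(\tmmathbf{y}) \geq 1$. As tuples with $S(\tmmathbf{y})=0$ contribute nothing, we have $\sum_{\tmmathbf{y}} S(\tmmathbf{y}) = \sum_{\tmmathbf{y}:\, S(\tmmathbf{y})\geq 1} S(\tmmathbf{y})$, and applying the Cauchy--Schwarz inequality to the latter sum yields
\[ \sum_{y_1,\ldots,y_t \in U_2} \mathcal{I}_{y_1,\ldots,y_t}^{r_1,\ldots,r_t}(U_1) = \bigl|\{\tmmathbf{y}: S(\tmmathbf{y}) \geq 1\}\bigr| \geq \frac{\left(\sum_{\tmmathbf{y}} S(\tmmathbf{y})\right)^2}{\sum_{\tmmathbf{y}} S(\tmmathbf{y})^2}. \]
Since the left-hand side is trivially at most $|U_2|^t$, it suffices to show that the right-hand side is $(1-o(1))|U_2|^t$, that is, that the first and second moments of $S$ each agree with their main terms up to a factor $1+o(1)$.

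For the first moment I would invoke the edge-colored analogue of Lemma \ref{star-sesb} (obtained, as the paper notes, by applying Corollary \ref{edge-c-sesb} to the color class $E^{r_i}(G)$ at each step of the induction), which gives
\[ \sum_{\tmmathbf{y}} S(\tmmathbf{y}) = \left(\frac{d}{n}\right)^t |U_1||U_2|^t + \tilde{o}\!\left(\left(\frac{d}{n}\right)^t |U_1||U_2|^t,\ \frac{n\lambda^2}{d}|U_2|^{t-1}\right). \]
The secondary scale is negligible against the main term precisely when $\frac{n\lambda^2}{d}|U_2|^{t-1} = o\!\left((d/n)^t|U_1||U_2|^t\right)$, which rearranges to $|U_1||U_2| \gg \lambda^2 (n/d)^{t+1}$ --- exactly our hypothesis. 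Hence $\sum_{\tmmathbf{y}} S(\tmmathbf{y}) = (1+o(1))(d/n)^t|U_1||U_2|^t$.

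For the second moment, the key observation is that $S(\tmmathbf{y})^2$ counts ordered pairs $(x,x') \in U_1^2$ with both $x,x' \in \mathcal{S}_{y_1,\ldots,y_t}^{r_1,\ldots,r_t}(U_1)$, so that $\sum_{\tmmathbf{y}} S(\tmmathbf{y})^2$ is exactly the number of (ordered, possibly degenerate) edge-colored copies of $K_{2,t}$ whose $2$-part lies in $U_1$, whose $t$-part lies in $U_2$, and in which both edges meeting $y_i$ carry color $r_i$. Applying the edge-colored analogue of Lemma \ref{book-lemma-sesb} to this configuration gives
\[ \sum_{\tmmathbf{y}} S(\tmmathbf{y})^2 = \left(\frac{d}{n}\right)^{2t}|U_1|^2|U_2|^t + \tilde{o}\!\left(\left(\frac{d}{n}\right)^{2t}|U_1|^2|U_2|^t,\ \lambda^4\left(\frac{n}{d}\right)^2 |U_2|^{t-2}\right). \]
A short computation shows that the ratio of the secondary scale to the main term is $\bigl(\lambda^2(n/d)^{t+1}/(|U_1||U_2|)\bigr)^2$, so it too is $o(1)$ under the very same hypothesis $|U_1||U_2|\gg\lambda^2(n/d)^{t+1}$; thus $\sum_{\tmmathbf{y}} S(\tmmathbf{y})^2 = (1+o(1))(d/n)^{2t}|U_1|^2|U_2|^t$.

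Feeding these two estimates into the Cauchy--Schwarz bound, the ratio becomes
\[ \frac{\left((1+o(1))(d/n)^t|U_1||U_2|^t\right)^2}{(1+o(1))(d/n)^{2t}|U_1|^2|U_2|^t} = (1+o(1))|U_2|^t, \]
which together with the trivial upper bound $|U_2|^t$ forces $\sum_{\tmmathbf{y}} \mathcal{I}_{y_1,\ldots,y_t}^{r_1,\ldots,r_t}(U_1) = (1-o(1))|U_2|^t$, as claimed. I expect the only genuinely delicate point to be the bookkeeping that verifies both secondary scales collapse to the single threshold $|U_1||U_2| \gg \lambda^2(n/d)^{t+1}$; the colored star and $K_{2,t}$ estimates themselves are immediate transcriptions of Lemmas \ref{star-sesb} and \ref{book-lemma-sesb} with the color superscripts inserted, as the paper indicates, so no new graph-theoretic input is required.
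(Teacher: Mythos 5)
Your proposal is correct and follows essentially the same route as the paper: a Cauchy--Schwarz second-moment argument with the first moment supplied by the colored version of Lemma \ref{star-sesb} and the second moment by the colored version of Lemma \ref{book-lemma-sesb}, followed by the trivial upper bound. In fact your write-up is slightly more careful than the paper's, since you explicitly verify that both $\tilde{o}$ secondary scales are dominated under the single hypothesis $|U_1||U_2| \gg \lambda^2 (n/d)^{t+1}$, a bookkeeping step the paper elides (and whose displayed asymptotics contain $(n/d)^t$ where $(d/n)^t$ is meant).
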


\begin{proof}
  By Cauchy-Schwartz inequality, we have
  \[ \left( \sum_{y_1, \ldots, y_t \in U_2} \tmmathbf{S}_{y_1, \ldots,
     y_t}^{r_1, \ldots, r_t} (U_1) \right)^2 \leq \sum_{y_1, \ldots, y_t \in
     U_2} \mathcal{I}_{y_1, \ldots, y_t}^{r_1, \ldots, r_t} (U_1) \sum_{y_1,
     \ldots, y_t \in U_2} (\tmmathbf{S}_{y_1, \ldots, y_t}^{r_1, \ldots, r_t}
     (U_1))^2 . \]
  It follows from Lemma \ref{star-sesb} that
  \begin{equation}\label{e14-sesb} \sum_{y_1, \ldots, y_t \in U_2} \tmmathbf{S}_{y_1, \ldots, y_t}^{r_1,
     \ldots, r_t} (U_1) = (1 + o (1)) \left( \frac{n}{d} \right)^t |U_1 | |U_2
     |^t, \end{equation}
  and from Lemma \ref{book-sesb} that
  \begin{equation}\label{e15-sesb} \sum_{y_1, \ldots, y_t \in U_2} (\tmmathbf{S}_{y_1, \ldots, y_t}^{r_1,
     \ldots, r_t} (U_1))^2 = \sum_{y_1, \ldots, y_t \in U_2} (1 + o (1))
     \left( \frac{n}{d} \right)^{2 t} |U_1 |^2 |U_2 |^t . \end{equation}
  Putting (\ref{e14-sesb}) and (\ref{e15-sesb}) together, we have
  \[ \sum_{y_1, \ldots, y_t \in U_2} \mathcal{I}_{y_1, \ldots, y_{t -
     1}}^{r_1, \ldots, r_{t - 1}} (U_1) \geq (1 - o (1)) |U_2 |^t . \]
  The upper bound is trivial, and the lemma follows.
\end{proof}

\subsection{Proof of Theorem \ref{colored-subgraph-sesb}}

We are now ready to give a proof of Theorem \ref{colored-subgraph-sesb}. We note that with the graph
theoretic results above, the argument in the proof of \cite[Theorem 2.13]{chapman} works for any $(n, d,
\lambda)$-colored graphs, and we will follow it closely. To make our
inductive argument work we will need the following definition, which is also
taken from \cite{chapman}.

\begin{definition}
  Given $U \subset V$, let $\mathcal{U} \subset U^t \equiv U \times \ldots
  \times U$, $t \geq 2$. Define
  \[ \mathcal{U}_{t - 1} =\{(y_1, \ldots, y_{t - 1}) : (y_1, \ldots, y_{t -
     1}, y_t) \in \mathcal{U}\}. \]
  Moreover, for each $(y_1, \ldots, y_{t - 1}) \in \mathcal{U}_{t - 1}$,
  define
  \[ \mathcal{U}(y_1, \ldots, y_{t - 1}) =\{y_t : (y_1, \ldots, y_{t - 1},
     y_t) \in \mathcal{U}\} \subset U. \]
\end{definition}

For any two sets $U_1 \subset U_2$, we say that $U_1 \sim U_2$ if and only if
$|U_1 | = (1 - o (1)) |U_2 |$. We now can state a slightly stronger version of
Lemma \ref{e-pinned-sesb}.

\begin{lemma}\label{pinned-sesb}
  For any $t \geq 2$ and $t - 1$ colors $r_1, \ldots, r_{t - 1}$, let $G = (V,
  E)$ be an $(n, d, \lambda)$-colored graph. Suppose that the color set
  $\mathcal{C}$ has cardinality $|\mathcal{C}| = (1 - o (1)) n / d$. For any
  subset $U \subset V$ of cardinality $|U| \gg \lambda (n / d)^{t / 2}$ and
  for any $\mathcal{U} \subset U^t = U \times \ldots \times U$ with
  $\mathcal{U} \sim |U|^t$, we have
  \[ \sum_{(y_1, \ldots, y_{t - 1}) \in \mathcal{U}_{t - 1}} \sum_{r_1,
     \ldots, r_{t - 1} \in \mathcal{C}} \mathcal{I}_{y_1, \ldots, y_{t -
     1}}^{r_1, \ldots, r_{t - 1}} (\mathcal{U}(y_1, \ldots, y_{t - 1})) = (1 -
     o (1)) |\mathcal{U}_{t - 1} | |\mathcal{C}|^{t - 1} . \]
\end{lemma}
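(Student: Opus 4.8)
The plan is to imitate the Cauchy--Schwarz argument used for Lemma~\ref{e-pinned-sesb}, but now carried out simultaneously over the set of bases $\mathcal{U}_{t-1}$ and over all colour tuples $r=(r_1,\ldots,r_{t-1})\in\mathcal{C}^{t-1}$. Abbreviate $b=(y_1,\ldots,y_{t-1})$ and $W_b=\mathcal{U}(y_1,\ldots,y_{t-1})$, and write $\tmmathbf{S}_b^{r}(W_b)$ for $\tmmathbf{S}_{y_1,\ldots,y_{t-1}}^{r_1,\ldots,r_{t-1}}(W_b)$. Since $\tmmathbf{S}_b^{r}(W_b)=\mathcal{I}_b^{r}(W_b)\,\tmmathbf{S}_b^{r}(W_b)$, Cauchy--Schwarz over the index set $\{(b,r)\}$ gives
\[ \Big(\sum_{b,r}\tmmathbf{S}_b^{r}(W_b)\Big)^2 \le \Big(\sum_{b,r}\mathcal{I}_b^{r}(W_b)\Big)\Big(\sum_{b,r}(\tmmathbf{S}_b^{r}(W_b))^2\Big). \]
It therefore suffices to bound the numerator $N:=\sum_{b,r}\tmmathbf{S}_b^{r}(W_b)$ from below and the denominator $D:=\sum_{b,r}(\tmmathbf{S}_b^{r}(W_b))^2$ from above, and then to check that $N^2/D$ already matches the trivial upper bound $\sum_{b,r}\mathcal{I}_b^{r}(W_b)\le|\mathcal{U}_{t-1}||\mathcal{C}|^{t-1}$.

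For the numerator, summing over the colours collapses the colour constraint into a bare adjacency constraint, so $N$ counts exactly those tuples $(y_1,\ldots,y_t)\in\mathcal{U}$ whose last coordinate $y_t$ is adjacent (in some colour) to each of $y_1,\ldots,y_{t-1}$. The key input is that $G$ is \emph{almost complete}: summing Corollary~\ref{edge-c-sesb} over the $|\mathcal{C}|=(1-o(1))n/d$ colour classes shows that the number of non-adjacent ordered pairs inside $U$ is $o(|U|^2)$, using $|U|\gg\lambda n/d$ (which follows from the hypothesis since $t\ge 2$). Hence at most $(t-1)\cdot o(|U|^2)\cdot|U|^{t-2}=o(|U|^t)$ tuples of $U^t$ fail the adjacency constraint, and together with $|\mathcal{U}|=(1-o(1))|U|^t$ this yields $N=(1-o(1))|U|^t$. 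The same almost-completeness, combined with $\mathcal{U}\subseteq U^t$ and $|\mathcal{U}|=(1-o(1))|U|^t$, also forces $|\mathcal{U}_{t-1}|=(1-o(1))|U|^{t-1}$.

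For the denominator I would bound $W_b\subseteq U$ and $\mathcal{U}_{t-1}\subseteq U^{t-1}$ to get
\[ D \le \sum_{r_1,\ldots,r_{t-1}\in\mathcal{C}}\ \sum_{y_1,\ldots,y_{t-1}\in U}\big(\tmmathbf{S}_{y_1,\ldots,y_{t-1}}^{r_1,\ldots,r_{t-1}}(U)\big)^2, \]
and then recognise the inner sum, for each fixed colour tuple, as the coloured $K_{2,t-1}$-count $\sum_{y}\tmmathbf{K}^2_{y_1,\ldots,y_{t-1}}(U)$ governed by the coloured version of Lemma~\ref{book-lemma-sesb}. Its hypothesis $|U|^2\ge\lambda^2(n/d)^{t}$ is exactly the assumption $|U|\gg\lambda(n/d)^{t/2}$, so each inner sum is $(1+o(1))(d/n)^{2(t-1)}|U|^{t+1}$; multiplying by the $|\mathcal{C}|^{t-1}=(1+o(1))(n/d)^{t-1}$ colour tuples gives $D\le(1+o(1))(d/n)^{t-1}|U|^{t+1}$.

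Combining the two estimates, $N^2/D\ge(1-o(1))(n/d)^{t-1}|U|^{t-1}=(1-o(1))|\mathcal{U}_{t-1}||\mathcal{C}|^{t-1}$, which meets the trivial upper bound and proves the claim. I expect the denominator estimate to be the main obstacle: it is precisely here that the sharp $K_{2,t}$ bound of Lemma~\ref{book-lemma-sesb} (rather than the weaker Lemma~\ref{bipartite-lemma-sesb}) is needed to obtain the matching leading constant under the stated threshold, and one must verify that the $\tilde{o}$-error genuinely becomes $o$ of the main term exactly at $|U|\gg\lambda(n/d)^{t/2}$. The secondary care point is the almost-completeness of $G$, which is what makes both $N$ and $|\mathcal{U}_{t-1}|$ concentrate and which relies on the assumption $|\mathcal{C}|=(1-o(1))n/d$.
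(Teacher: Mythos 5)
Your proposal is correct and follows essentially the same route as the paper's own proof: the same Cauchy--Schwarz inequality over pairs of bases and colour tuples, the same first-moment lower bound $(1-o(1))|U|^t$ coming from the near-completeness of the union of colour classes together with $|\mathcal{U}|=(1-o(1))|U|^t$, and the same second-moment upper bound obtained by enlarging $\mathcal{U}_{t-1}$ to $U^{t-1}$ and $\mathcal{U}(y_1,\ldots,y_{t-1})$ to $U$ and applying the coloured version of Lemma~\ref{book-lemma-sesb}, whose error term is $o$ of the main term precisely when $|U|\gg\lambda(n/d)^{t/2}$. If anything, your write-up is more careful at the first-moment step, where the paper only invokes $|E(G)|=(1-o(1))n^2/2$ while you correctly derive the required almost-completeness inside $U$ by summing Corollary~\ref{edge-c-sesb} over the colour classes using $|U|\gg\lambda n/d$.
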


\begin{proof}
  The upper bound is trivial so it suffices to show that
  \[ \sum_{(y_1, \ldots, y_{t - 1}) \in \mathcal{U}_{t - 1}} \sum_{r_1,
     \ldots, r_{t - 1} \in \mathcal{C}} \mathcal{I}_{y_1, \ldots, y_{t -
     1}}^{r_1, \ldots, r_{t - 1}} (\mathcal{U}(y_1, \ldots, y_{t - 1})) \geq
     (1 - o (1)) |\mathcal{U}_{t - 1} | |\mathcal{C}|^{t - 1} . \]
  Note that we have $(1 - o (1)) n / d$ colors, so$|E (G) | = (1 - o (1)) n^2
  / 2$. Hence,
  \begin{eqnarray}
    &  & \sum_{(y_1, \ldots, y_{t - 1}) \in \mathcal{U}_{t - 1}} \sum_{r_1,
    \ldots, r_{t - 1} \in \mathcal{C}} \tmmathbf{S}_{y_1, \ldots, y_{t -
    1}}^{r_1, \ldots, r_{t - 1}} (\mathcal{U}(y_1, \ldots, y_{t - 1}))
    \nonumber\\
    & \geq & \sum_{y_1, \ldots, y_{t - 1} \in U} \sum_{r_1, \ldots, r_{t - 1}
    \in \mathcal{C}} \mathcal{S}_{y_1, \ldots, y_{t - 1}}^{r_1, \ldots, r_{t -
    1}} (U) - (|U|^t -\mathcal{U}) \nonumber\\
    & = & (1 - o (1)) |U|^t . \label{e16-sesb}
  \end{eqnarray}
  On the other hand, it follows from Lemma \ref{book-lemma-sesb} that
  \begin{eqnarray}
    &  & \sum_{(y_1, \ldots, y_{t - 1}) \in \mathcal{U}_{t - 1}} \sum_{r_1,
    \ldots, r_{t - 1} \in \mathcal{C}} (\tmmathbf{S}_{y_1, \ldots, y_{t -
    1}}^{r_1, \ldots, r_{t - 1}} (\mathcal{U}(y_1, \ldots, y_{t - 1})))^2
    \nonumber\\
    & \leq & \sum_{(y_1, \ldots, y_{t - 1}) \in U^{t - 1}} \sum_{r_1, \ldots,
    r_{t - 1} \in \mathcal{C}} (\tmmathbf{S}_{y_1, \ldots, y_{t - 1}}^{r_1,
    \ldots, r_{t - 1}} (U))^2 = (1 + o (1)) |U|^{t + 1} \left( \frac{d}{n}
    \right)^{2 (t - 1)} |\mathcal{C}|^{t - 1} \nonumber\\
    & = & (1 + o (1)) |U|^{t - 1} / |\mathcal{C}|^{t - 1}, \label{e17-sesb}
  \end{eqnarray}
  since $|\mathcal{C}| = (1 - o (1)) n / d$. The lemma now follows from (\ref{e16-sesb}), (\ref{e17-sesb}),
  and the Cauchy-Schwarz inequality.
\end{proof}

By the pigeon-hole principle, we have an immediate corollary of Lemma \ref{pinned-sesb}.

\begin{corollary} \label{main-cor-sesb}
  For any $t \geq 2$ and $t - 1$ colors $r_1, \ldots, r_{t - 1}$, let $G = (V,
  E)$ be an $(n, d, \lambda)$-colored graph. Suppose that the color set
  $\mathcal{C}$ has cardinality $|\mathcal{C}| = (1 - o (1)) n / d$. For any
  subset $U \subset V$ of cardinality $|U| \gg \lambda (n / d)^{t / 2}$ and
  for any $\mathcal{U} \subset U^t = U \times \ldots \times U$ with
  $\mathcal{U} \sim |U|^t$, there exists a subset $\mathcal{U}^{(t - 1)}
  \subset \mathcal{U}_{t - 1}$ with $\mathcal{U}^{(t - 1)} \sim \mathcal{U}_{t
  - 1}$ such that
  \[ \sum_{r_1, \ldots, r_{t - 1} \in \mathcal{C}} \mathcal{I}_{y_1, \ldots,
     y_{t - 1}}^{r_1, \ldots, r_{t - 1}} (\mathcal{U}(y_1, \ldots, y_{t - 1}))
     = (1 - o (1)) |\mathcal{C}|^{t - 1} \]
  for every $(y_1, \ldots, y_{t - 1}) \in \mathcal{U}^{(t - 1)}$.
\end{corollary}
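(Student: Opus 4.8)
The plan is to deduce the corollary from Lemma \ref{pinned-sesb} by the routine averaging argument that the phrase ``pigeon-hole principle'' refers to. First I would set $N = |\mathcal{U}_{t - 1}|$ and $M = |\mathcal{C}|^{t - 1}$, and for each $(y_1, \ldots, y_{t - 1}) \in \mathcal{U}_{t - 1}$ write
\[ f(y_1, \ldots, y_{t - 1}) = \sum_{r_1, \ldots, r_{t - 1} \in \mathcal{C}} \mathcal{I}_{y_1, \ldots, y_{t - 1}}^{r_1, \ldots, r_{t - 1}} (\mathcal{U}(y_1, \ldots, y_{t - 1})). \]
The key observation is that $f$ is automatically bounded above: the sum ranges over exactly $M$ color tuples and each indicator $\mathcal{I}$ takes values in $\{0, 1\}$, so $0 \leq f(y_1, \ldots, y_{t - 1}) \leq M$ always. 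Meanwhile Lemma \ref{pinned-sesb} asserts $\sum_{(y_1, \ldots, y_{t - 1}) \in \mathcal{U}_{t - 1}} f(y_1, \ldots, y_{t - 1}) = (1 - o (1)) N M$, so the average of $f$ over $\mathcal{U}_{t - 1}$ is $(1 - o (1)) M$, i.e. the average value is as large as the pointwise maximum allows.

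Next I would run the standard argument forcing almost every value of $f$ to sit near the ceiling $M$. Let $\varepsilon = \varepsilon (q) \to 0$ be the error term supplied by Lemma \ref{pinned-sesb}, so the total sum is at least $(1 - \varepsilon) N M$. Fix a threshold $\delta = \delta (q)$ with $\varepsilon \ll \delta \ll 1$ (for instance $\delta = \sqrt{\varepsilon}$), and let
\[ B = \{(y_1, \ldots, y_{t - 1}) \in \mathcal{U}_{t - 1} : f(y_1, \ldots, y_{t - 1}) < (1 - \delta) M\} \]
be the set of bases whose color-extension count falls short. Splitting the total sum over $B$ and its complement, using $f \leq M$ off $B$ and $f < (1 - \delta) M$ on $B$, I would obtain
\[ (1 - \varepsilon) N M \leq |B| (1 - \delta) M + (N - |B|) M = N M - \delta M |B|, \]
which rearranges to $|B| \leq (\varepsilon / \delta) N = o (N)$.

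I would then take $\mathcal{U}^{(t - 1)} = \mathcal{U}_{t - 1} \setminus B$. By the bound on $|B|$ this set satisfies $|\mathcal{U}^{(t - 1)}| = (1 - o (1)) N = (1 - o (1)) |\mathcal{U}_{t - 1}|$, that is $\mathcal{U}^{(t - 1)} \sim \mathcal{U}_{t - 1}$. By the definition of $B$, every base $(y_1, \ldots, y_{t - 1}) \in \mathcal{U}^{(t - 1)}$ has $f(y_1, \ldots, y_{t - 1}) \geq (1 - \delta) M$, and combined with the trivial upper bound $f \leq M$ this yields $f(y_1, \ldots, y_{t - 1}) = (1 - o (1)) |\mathcal{C}|^{t - 1}$, as required. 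There is no genuine obstacle here: the whole content of the corollary is already packed into the first-moment estimate of Lemma \ref{pinned-sesb}, and the only point demanding a little care is coordinating the two $o (1)$ quantifiers — choosing the cutoff $\delta$ to decay more slowly than the error $\varepsilon$, so that simultaneously the exceptional set $B$ is negligible and the surviving bases still realize almost all of the $|\mathcal{C}|^{t - 1}$ colorings.
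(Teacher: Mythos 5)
Your proposal is correct and is exactly the argument the paper intends: the paper derives this corollary from Lemma \ref{pinned-sesb} in one line ``by the pigeon-hole principle,'' which is precisely the Markov-type averaging you carry out, with the trivial bound $f \leq |\mathcal{C}|^{t-1}$ and a threshold $\delta = \sqrt{\varepsilon}$ decaying slower than the error term. Your write-up just makes explicit the details the paper leaves to the reader.
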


This corollary says that for any large set $\mathcal{U} \sim U^t$, there
exists a large subset $\mathcal{U}^{(t - 1)} \subset \mathcal{U}_{t - 1}
\subset U^{t - 1}$ such that $\mathcal{U}^{(t - 1)} \sim |U|^{t - 1}$, and any
$(t - 1)$-tuple $(y_1, \ldots, y_{t - 1}) \in \mathcal{U}^{(t - 1)}$ is
extendable to at least $(1 - o (1)) |\mathcal{C}|^{t - 1}$ types of $(t -
1)$-stars with roots in $\mathcal{U}(y_1, \ldots, y_{t - 1})$. The rest of the
proof is easy. For any set $U \subset V$ with cardinality $|U| \gg \lambda (n
/ d)^{t / 2}$, we construct $t$ sets $\mathcal{U}^{(t)}, \ldots,
\mathcal{U}^{(1)}$ inductively as follows. Let $\mathcal{U}^{(t)} = U^t = U
\times \ldots \times U$. Since $|U| \gg \lambda (n / d)^{t / 2}$, from
Corollary \ref{main-cor-sesb}, we can choose $\mathcal{U}^{(t - 1)} \subset
\mathcal{U}^{(t)}_{t - 1}$ such that any $(t - 1)$-tuple $(y_1, \ldots, y_{t -
1})$ in $\mathcal{U}^{(t - 1)}$ is extendable to at least $(1 - o (1))
|\mathcal{C}|^{t - 1}$ types of $(t - 1)$-stars with roots in
$\mathcal{U}^{(t)} (y_1, \ldots, y_{t - 1})$. Suppose that we have constructed
$\mathcal{U}^{(t)}, \ldots, \mathcal{U}^{(i)}$ ($i \geq 2$) \ such that $\mathcal{U}^{(j)} \sim U^j$ for $i \leq j \leq t - 1$, 
and \ any $j$-tuple $(y_1, \ldots, y_j)$ in
$\mathcal{U}^{(j)}$ is extendable to at least $(1 - o (1)) |\mathcal{C}|^j$
types of $j$-stars with roots in $\mathcal{U}^{(j + 1)} (y_1, \ldots, y_j)$.
Since
\[ |U| \gg \lambda (n / d)^{t / 2} \geq \lambda (n / d)^{i / 2}, \]
from Corollary \ref{main-cor-sesb} again, we can choose $\mathcal{U}^{(i - 1)} \subset
\mathcal{U}^{(i)}_{i - 1}$ such that $\mathcal{U}^{(i-1)} \sim U^{i-1}$, and any $(t - 1)$-tuple $(y_1, \ldots, y_{t -
1})$ in $\mathcal{U}^{(i - 1)}$ is extendable to at least $(1 - o (1))
|\mathcal{C}|^{i - 1}$ types of $(i - 1)$-stars with roots in
$\mathcal{U}^{(i)} (y_1, \ldots, y_{i - 1})$. Repeat the process until we have
constructed $\mathcal{U}^{(1)} \subset \mathcal{U}^{(2)}_1 \subset U$ of
cardinality $|\mathcal{U}^{(1)} | \sim |U|$. Take any vertex $v$ in
$\mathcal{U}^{(1)}$, it is clear that we can extend from $v$ to at least $(1 -
o (1)) |\mathcal{C}|^{\binom{t}{2}}$ types of edge-colored $K_t$ in $U^t$. This
completes the proof of Theorem \ref{colored-subgraph-sesb}.

\section{Projective norm graphs}

We first recall the construction of the projective norm graphs
$\mathcal{N}\mathcal{G}_{q, n} (\lambda)$ in \cite{ap}. It is possible to get a
slightly better version of these (\cite{ars}), but this makes no essential difference
for our purpose here. The construction is the following. Let $n \geq 2$ be an
integer and $q$ be an odd prime power. Let $\mathbbm{F}_q$ be the finite field
of $q$ elements, and $\mathbbm{F}_{q^n}$ be the unique extension of degree $n$
of $\mathbbm{F}_q$. The vertex set of the graph
$\mathcal{N}\mathcal{G}_{q, n} (\lambda)$ is the set $V =\mathbbm{F}_{q^n}$.
Two distinct vertices $X$ and $Y \in V$ are adjacent if and only if $N (X + Y)
= \lambda$, where the norm $N$ is defined as in Section \ref{norm-result-sesb} with an extension $N (0) =
0$. For any $\lambda \in \mathbbm{F}_q^{\ast}$, the equation $N (X) = \lambda$
has a solution, and if $X_0$ is a given solution, the set of solutions is in
one-to-one correspondence with solutions of $N (X) = 1$, which by Hilbert's
Theorem 90 (or by direct proof) is given by $X = \sigma (Y) Y^{- 1} =
Y^{q - 1}$ for some $Y \in \mathbbm{F}_{q^n}^{\ast}$ (see \cite{ik} for more
details). Hence, all projective norm graphs $\mathcal{N}\mathcal{G}_{q, n} (\lambda)$ ($\lambda \in \mathbbm{F}_q^*$) are isomorphic.
It follows immediately from the definition that
$\mathcal{N}\mathcal{G}_{q, n} (1)$ is a regular graph of order $q^n$ and
valency $(q^n - 1) / (q - 1)$. The eigenvalues of this graph is not hard to
compute. We present here only a sketch of the proof,
which follows the presentation of \cite{ap}. Let $A$ be the adjacency matrix of
$\mathcal{N}\mathcal{G}_{q, n} (\lambda)$. The rows and columns of this matrix
are indexed by the ordered pairs of the set $\mathbbm{F}_{q^n}$. Let $\chi$ be
a character of the additive group of $\mathbbm{F}_{q^n}$. One can check (see
\cite{ap}) that $\chi$ is an eigenvector of $A^2$ with eigenvalue $| \sum_{N (c) = 1}
\chi (c) |^2$ and all eigenvalues of $A^2$ are of this form. The trivial
character is corespondent to the large eigenvalue $(q^t - 1) / (q - 1)$ of
$A$. The others can be estimated using Weil's bound on the character sum (see
\cite[Theorem 2E(i)]{schmidt}), and the fact that all solutions of $N (X) = 1$ are all given
by $X = \sigma (Y) Y^{- 1} = Y^{q - 1}$ for some $Y \in
\mathbbm{F}_{q^n}^{\ast}$,
\[ \left| \sum_{N (c) = 1} \chi (c) \right| = \left| \frac{1}{q - 1} \sum_{d
   \in \mathbbm{F}_{q^n}^{\ast}} \chi (d^{q - 1}) \right| < \frac{(q - 2) q^{n
   / 2}}{q - 1} < q^{n / 2} . \]
Therefore, we have the following result.

\begin{lemma}\label{norm-graph-lemma}
  For any $n \geq 2$ and $\lambda \in \mathbbm{F}_q^{\ast}$, the projective
  norm graph $\mathcal{N}\mathcal{G}_{q, n} (\lambda)$ is a $(q^n, \frac{q^n -
  1}{q - 1}, q^{n / 2})$-graph. 
\end{lemma}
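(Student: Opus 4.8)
The plan is to verify the three parameters $q^n$, $(q^n-1)/(q-1)$ and $q^{n/2}$ one at a time, assembling the ingredients gathered in the construction above. The number of vertices is immediate, since the vertex set is $V = \mathbbm{F}_{q^n}$. For the valency, I would fix a vertex $X$ and count its neighbours: $Y$ is adjacent to $X$ exactly when $N (X + Y) = \lambda$, and substituting $Z = X + Y$ shows that the number of such $Y$ equals the number of $Z \in \mathbbm{F}_{q^n}$ with $N (Z) = \lambda$. Because $\lambda \in \mathbbm{F}_q^{\ast}$ and $N : \mathbbm{F}_{q^n}^{\ast} \rightarrow \mathbbm{F}_q^{\ast}$ is a surjective homomorphism, this fibre has $|\ker N| = (q^n - 1) / (q - 1)$ elements, giving the stated regularity uniformly in $X$ (as already recorded before the lemma, the convention $N(0)=0$ being what removes the potential adjacency of $X$ to $-X$).

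The substantive point is the bound on the second eigenvalue. Since all the graphs $\mathcal{N}\mathcal{G}_{q, n} (\lambda)$ with $\lambda \in \mathbbm{F}_q^{\ast}$ are isomorphic, hence cospectral, it is enough to treat $\lambda = 1$. Let $A$ be the (real, symmetric) adjacency matrix, with rows and columns indexed by $\mathbbm{F}_{q^n}$ and $A_{X, Y} = 1$ exactly when $N (X + Y) = 1$. The key structural observation is that $A$ is a ``sum Cayley'' matrix: for any additive character $\chi$ of $\mathbbm{F}_{q^n}$, the substitution $Z = X + Y$ gives
\[ (A \chi) (X) = \sum_{N (X + Y) = 1} \chi (Y) = \chi (- X) \sum_{N (Z) = 1} \chi (Z) = S (\chi) \, \overline{\chi} (X), \]
where $S (\chi) = \sum_{N (c) = 1} \chi (c)$. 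Applying $A$ a second time, and using that $S (\overline{\chi}) = \overline{S (\chi)}$, yields $A^2 \chi = |S (\chi)|^2 \chi$. As the $q^n$ additive characters form an orthogonal basis, they diagonalise $A^2$, so the spectrum of $A^2$ is precisely $\{ |S (\chi)|^2 \}$, and hence every eigenvalue of $A$ has absolute value $|S (\chi)|$ for some $\chi$.

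It remains to estimate $|S (\chi)|$. The trivial character gives $S (\chi_0) = |\{ c : N (c) = 1 \}| = (q^n - 1) / (q - 1)$, which is the Perron eigenvalue $d$. For every nontrivial $\chi$, the Hilbert 90 parametrisation rewrites $S (\chi)$ as the complete sum $\frac{1}{q - 1} \sum_{d \in \mathbbm{F}_{q^n}^{\ast}} \chi (d^{q - 1})$, to which Weil's bound applies, giving $|S (\chi)| < q^{n / 2}$ --- this is exactly the inequality displayed just before the lemma. Since $(q^n - 1) / (q - 1) > q^{n / 2}$ for $n \geq 2$, the Perron eigenvalue is strictly larger than all the rest, and every other eigenvalue of $A$ has absolute value below $q^{n / 2}$. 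Therefore $\lambda (G) \leq q^{n / 2}$, which completes the verification that $\mathcal{N}\mathcal{G}_{q, n} (\lambda)$ is a $(q^n, (q^n - 1) / (q - 1), q^{n / 2})$-graph.

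I expect the only genuine obstacle to lie in the character-sum estimate: converting the restricted sum over the norm-one set into the complete sum $\sum_{d} \chi (d^{q - 1})$ via the $(q - 1)$-to-one map $d \mapsto d^{q - 1}$ and then invoking Weil's theorem is where the arithmetic content resides, whereas the diagonalisation of $A^2$ and the counting of vertices and degrees are routine. Since the needed instance of Weil's bound is already displayed above, the write-up reduces to assembling these observations.
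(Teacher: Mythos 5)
Your proposal is correct and follows essentially the same route as the paper's (sketched) proof: additive characters of $\mathbbm{F}_{q^n}$ diagonalise $A^2$ with eigenvalues $\left| \sum_{N(c)=1} \chi(c) \right|^2$, the trivial character yields the degree $(q^n-1)/(q-1)$, and the nontrivial sums are converted via the Hilbert 90 parametrisation into complete sums $\frac{1}{q-1}\sum_{d}\chi(d^{q-1})$ bounded by Weil's estimate below $q^{n/2}$. You merely fill in details the paper leaves implicit (the computation $A\chi = S(\chi)\overline{\chi}$, hence $A^2\chi = |S(\chi)|^2\chi$, and the separation of the Perron eigenvalue), so no substantive difference.
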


\subsection{Proofs of Theorems \ref{norm-equation-sesb}, \ref{norm-pinned-sesb}, \ref{norm-fs-sesb} and \ref{norm-ps-sesb}}

We are now ready to prove results on norm equations in Section \ref{norm-result-sesb}. For
any two set $\mathcal{A}, \mathcal{B} \subseteq \mathbbm{F}_{q^n} $ and
$\lambda \in \mathbbm{F}_q^{\ast}$, let $e_{\lambda} (\mathcal{A},
\mathcal{B}) =\#\{(X, Y) \in \mathcal{A} \times \mathcal{B}: N (X + Y) =
\lambda\}$. It follows from Corollary \ref{edge-c-sesb} and Lemma \ref{norm-graph-lemma} that
\[ \left| e_{\lambda} (\mathcal{A}, \mathcal{B}) - \frac{(q^n - 1)
   |\mathcal{A}| |\mathcal{B}|}{(q - 1) q^n} \right| < q^{n / 2}
   \sqrt{|\mathcal{A}| |\mathcal{B}|} . \]
Hence, $e_{\lambda} (\mathcal{A}, \mathcal{B}) > 0$ if $|\mathcal{A}|
|\mathcal{B}| \geq q^{n + 2}$. This completes the proof of Theorem \ref{norm-equation-sesb}.

Next, we consider the graph $G$ on the vertex set $\mathbbm{F}_{q^n}$. Two distinct
vertices $X, Y$ are connected by an $\lambda$-colored edge if and only if $N
(X + Y) = \lambda$. Note that we only use $(q - 1)$ colors $\lambda \in
\mathbbm{F}_q^{\ast}$. From Lemma \ref{norm-graph-lemma}, the graph $G$ is a $(q^n, q^{n - 1},
q^{n / 2})$-colored graph with $(q - 1)$ colors. Theorem \ref{norm-fs-sesb} and Theorem \ref{norm-ps-sesb}
now follow immediately from Theorem \ref{a-c-sesb} and Theorem \ref{colored-subgraph-sesb}. Finally, it follows
from Lemma \ref{e-pinned-sesb} that
\[ \sum_{X \in \mathcal{A}} \sum_{\lambda \in \mathbbm{F}_q^{\ast}}
   \mathcal{I}_X^{\lambda} (\mathcal{B}) = \sum_{\lambda \in
   \mathbbm{F}_q^{\ast}} \sum_{X \in \mathcal{A}} \mathcal{I}_X^{\lambda}
   (\mathcal{B}) = (1 - o (1)) |\mathcal{A}|q, \]
which implies that there exits $\mathcal{A}^{'} \subset \mathcal{A}$, $\mathcal{A}^{'} \sim \mathcal{A}$ such that
\[ \sum_{\lambda \in \mathbbm{F}_q^{\ast}}
   \mathcal{I}_X^{\lambda} (\mathcal{B}) = (1 - o (1)) q,\] 
for any $X \in \mathcal{A}^{'}$. This completes the proof of Theorem \ref{norm-pinned-sesb}.

\section{Product graphs - Proofs of Theorems \ref{bilinear-equation-sesb}, \ref{bilinear-pinned-sesb}, and \ref{bilinear-system-sesb}} \label{section:bilinear}

For any non-degenerate bilinear from $B (\cdot, \cdot)$ on
$\mathbbm{F}_q^d$, and for any $\lambda \in \mathbbm{F}$, the product graph
$B_{q, d} (\lambda)$ is defined as follows. The vertex set of the product
graph $B_{q, d} (\lambda)$ is the set $V (B_{q, d} (\lambda)) =\mathbbm{F}^d
\backslash (0, \ldots, 0)$. Two vertices $\tmmathbf{a}$ and $\tmmathbf{b} \in
V (B_{q, d} (\lambda))$ are connected by an edge, $(\tmmathbf{a},
\tmmathbf{b}) \in E (B_{q, d} (\lambda))$, if and only if $B (\tmmathbf{a},
\tmmathbf{b}) = \lambda$. When $\lambda = 0$, the graph is just a blow-up of a
variant of Erd\H{o}s-R\'enyi graph. The eigenvalues of this graph are easy to compute
(for example, see \cite{ak}). We will now study the product graph when $\lambda \in
\mathbbm{F}^{\ast}$.

\begin{lemma}\label{product-graph-lemma}
  For any $d \geq 2$ and $\lambda \in \mathbbm{F}^{\ast}$, the product graph,
  $B_{q, d} (\lambda),$ is a $(q^d - 1, q^{d - 1}, \sqrt{2 q^{d - 1}})$-graph.
\end{lemma}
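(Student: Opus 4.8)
The plan is to read the first two parameters straight off the definition and to extract the spectral gap by computing the square of the adjacency matrix rather than diagonalizing it directly. Note that, since the defining relation $B(\tmmathbf{a},\tmmathbf{b})=\lambda$ is bilinear rather than additive, the graph is not a Cayley graph and the additive characters $\tmmathbf{b}\mapsto\psi(\tmmathbf{m}\cdot\tmmathbf{b})$ are \emph{not} eigenvectors, so the direct character computation used for the projective norm graphs does not carry over. For the order and regularity: the vertex set is $\mathbbm{F}_q^d\setminus\{\tmmathbf{0}\}$, so $n=q^d-1$; and for a fixed $\tmmathbf{a}\neq\tmmathbf{0}$, non-degeneracy of $B$ makes $\tmmathbf{b}\mapsto B(\tmmathbf{a},\tmmathbf{b})$ a nonzero linear functional, so $\{\tmmathbf{b}:B(\tmmathbf{a},\tmmathbf{b})=\lambda\}$ is an affine hyperplane of size $q^{d-1}$ which, because $\lambda\neq 0$, avoids $\tmmathbf{0}$. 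Hence every vertex has exactly $q^{d-1}$ neighbours and the graph is $q^{d-1}$-regular.

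The heart of the argument is an exact entrywise description of $A^2$ on $V=\mathbbm{F}_q^d\setminus\{\tmmathbf{0}\}$. For distinct $\tmmathbf{a},\tmmathbf{c}$ the entry $(A^2)_{\tmmathbf{a},\tmmathbf{c}}$ counts common neighbours, i.e. solutions of $B(\tmmathbf{a},\tmmathbf{b})=B(\tmmathbf{c},\tmmathbf{b})=\lambda$. If $\tmmathbf{a},\tmmathbf{c}$ are linearly independent then non-degeneracy makes $B(\tmmathbf{a},\cdot)$ and $B(\tmmathbf{c},\cdot)$ independent functionals, so the solution set is an affine subspace of codimension two and there are $q^{d-2}$ common neighbours; if $\tmmathbf{c}=\mu\tmmathbf{a}$ with $\mu\neq 1$ the two conditions force $\lambda=\lambda/\mu$, which is impossible, so there are none; on the diagonal the count is the degree $q^{d-1}$. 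Writing $S$ for the $\{0,1\}$-matrix recording distinct collinear pairs, this yields the exact identity
\[ A^2 = q^{d-2}J + (q^{d-1}-q^{d-2})I - q^{d-2}S. \]

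Diagonalizing is then routine. Grouping vertices by the $(q^d-1)/(q-1)$ punctured lines through the origin, $I+S$ is block-diagonal with each block the all-ones matrix of size $q-1$, hence has eigenvalues $q-1$ and $0$; equivalently $S$ has eigenvalues $q-2$ and $-1$. The all-ones vector $\tmmathbf{1}$ is the common Perron eigenvector of $J$, $I$ and $S$, while on $\tmmathbf{1}^{\perp}$ one has $J=0$, so there $A^2=(q^{d-1}-q^{d-2})I-q^{d-2}S$. Substituting the two $S$-eigenvalues gives $A^2$-eigenvalues $q^{d-1}$ and $q^{d-2}$ on $\tmmathbf{1}^{\perp}$, together with the Perron value $q^{2d-2}=(q^{d-1})^2$. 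Therefore every nontrivial eigenvalue of $A$ has absolute value at most $\sqrt{q^{d-1}}\leq\sqrt{2q^{d-1}}$, which is the claim.

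I expect the delicate point to be the bookkeeping of the collinear (rank-one) pairs: these are exactly where the naive codimension-two count fails, and they are what force the matrix $S$ into the formula for $A^2$. The factor $\sqrt{2}$ in the statement is comfortable slack: it absorbs the rank-one correction coming from self-loops at the vertices on the quadric $B(\tmmathbf{x},\tmmathbf{x})=\lambda$ (which shift each eigenvalue by at most $1$ and preserve exact regularity if the loops are retained), and it also covers the variant in which $B$ is not symmetric, where one runs the identical common-neighbour computation on $AA^{\mathsf{T}}$ and bounds the second singular value instead.
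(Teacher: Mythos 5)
Your proof is correct and takes essentially the same route as the paper: the same order/regularity count, the same common-neighbour case analysis on linear dependence, the same exact identity for $A^2$ in terms of $J$, $I$ and a collinearity matrix, and the same restriction to $\tmmathbf{1}^{\perp}$. The one refinement is at the final step: the paper bounds the eigenvalues of its dependence matrix by its degree and concludes $\theta^2 \leq q^{d-1}-q^{d-2}+q-1 < 2q^{d-1}$, whereas you diagonalize $S$ exactly via the punctured-line blocks (eigenvalues $q-2$ and $-1$), getting $\theta^2 \in \{q^{d-2},\, q^{d-1}\}$ and hence the sharper bound $\sqrt{q^{d-1}}$ that the paper only records in its parenthetical remark --- and your $A^2$ identity carries the correct coefficient $q^{d-2}$ on the collinearity matrix, where the paper's displayed equation (\ref{e19-sesb}) contains typos ($I$ written as $J$, and the factor $q^{d-2}$ dropped from $E$).
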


\begin{proof}
  It is easy to see that $B_{q, d} (\lambda)$ is a regular graph of order $q^d
  - 1$ and valency $q^{d - 1}$. We now compute the eigenvalues of this
  multigraph (i.e. graph with loops). For any $\tmmathbf{a} \neq \tmmathbf{b}
  \in \mathbbm{F}^d \backslash (0, \ldots, 0)$, the system
  \[ B (\tmmathbf{a}, \tmmathbf{x}) = \lambda, B (\tmmathbf{b}, \tmmathbf{y})
     = \lambda, \tmmathbf{x} \in \mathbbm{F}^d \backslash (0, \ldots, 0), \]
  has $q^{d - 2}$ solutions when $\tmmathbf{a} \neq \alpha \tmmathbf{b}$ for
  all $\alpha \in \mathbbm{F}_q$, and no solution otherwise. Hence, for any two
  vertices $\tmmathbf{a} \neq \tmmathbf{b}$, $\tmmathbf{a}$ and $\tmmathbf{b}$
  have $q^{d - 2}$ common neighbors if $\tmmathbf{a}$ and $\tmmathbf{b}$ are
  linear independent, and no common neighbor otherwise. Let $A$ be the
  adjacency matrix of $\mathcal{P}_{q, d} (\lambda)$. It follows that
  \begin{equation}\label{e19-sesb} A^2 = q^{d - 2} J + (q^{d - 1} - q^{d - 2}) J - E, \end{equation}
  where $J$ is the all-one matrix, $I$ is the identity matrix, and $E$ is the
  adjacency matrix of the graph $\mathcal{B}_E$, where for any two vertices
  $\tmmathbf{a}$ and $\tmmathbf{b} \in V (B_{q, d} (\lambda))$,
  $(\tmmathbf{a}, \tmmathbf{b})$ is an edge of $\mathcal{B}_E$ \ if and only
  if $\tmmathbf{a}$ and $\tmmathbf{b}$ are linearly dependent. Therefore,
  $\mathcal{B}_E$ is a $(q - 1)$-regular graph, and all eigenvalues of $E$ are
  at most $q - 1$. Since $B_{q, d} (\lambda)$ is a $(q^d - 1)$-regular graph,
  $q^{d - 1}$ is an eigenvalue of $A$ with the all-one eigenvalue
  $\tmmathbf{1}$. The graph $B_{q, d} (\lambda)$ is connected so the
  eigenvalue $q^{d - 1}$ has multiplicity one. For any $\tmmathbf{a}$ and $\tmmathbf{b}$ with $B (\tmmathbf{a},
  \tmmathbf{b}) = \lambda$,  $\tmmathbf{a}$ and $\tmmathbf{b}$ are linearly
  independent and $\tmmathbf{a}$ and $\tmmathbf{b}$ have $q^{d - 2}$ common
  neighbors). Hence, the graph is not bipartite. Therefore, for any other eigenvalue $\theta$, $| \theta | < q^{d - 1}$.
  Let $\tmmathbf{v}_{\theta}$ denote the corresponding eigenvector
  of $\theta$. Note that $\tmmathbf{v}_{\theta} \in \tmmathbf{1}^{\bot}$, so
  $J\tmmathbf{v}_{\theta} = 0$. It follows from (\ref{e19-sesb}) that
  \[ (\theta^2 - q^{d - 1} + q^{d - 2})\tmmathbf{v}_{\theta} =
     E\tmmathbf{v}_{\theta} . \]
  Hence, $\tmmathbf{v}_{\theta}$ is also an eigenvector of $E$. Since all
  eigenvalues of $E$ is bounded by $q - 1$, we have
  \[ \theta^2 \leq q^{d - 1} - q^{d - 2} + q - 1 < q^d + q - 2 < 2 q^{d - 1} .
  \]
  (Note that, one can get $\theta^2 < q^{d - 1}$ when $d \geq 2$.) The lemma
  follows.
\end{proof}

Similarly as in the previous section, Theorems \ref{bilinear-equation-sesb}, \ref{bilinear-pinned-sesb}, and \ref{bilinear-system-sesb} follow immediately from Corollary \ref{edge-c-sesb}, Lemma \ref{e-pinned-sesb}, Theorem \ref{colored-subgraph-sesb}, and Lemma \ref{product-graph-lemma}.

\section{Sum-product graphs}

For any non-degenerate bilinear form $B (\cdot, \cdot)$ on
$\mathbbm{F}_q^d$ and for any $\lambda \in \mathbbm{F}_q$, the sum-product
graph $S B_{q, d} (\lambda)$ is defined as follows. The vertex set of the
sum-product graph $S B_{q, d} (\lambda)$ is the set $V (S B_{q, d})
=\mathbbm{F} \times \mathbbm{F}^d$. Two vertices $U = (a, \tmmathbf{b})$ and
$V = (c, \tmmathbf{d}) \in V (S B_{q, d})$ are connected by an edge, $(U, V)
\in E (S B_{q, d})$, if and only if $a + c + \lambda = B (\tmmathbf{b},
\tmmathbf{d})$. Our construction is similar to that of Solymosi in \cite{solymosi}.

\begin{lemma}\label{sp-graph-lemma}
  For any $d \geq 1$ and $\lambda \in \mathbbm{F}$, the sum-product graph, $S
  B_{q, d} (\lambda),$ is a $(q^{d + 1}, q^d, \sqrt{2 q^d})$-graph.
\end{lemma}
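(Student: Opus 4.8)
The plan is to follow the strategy used for the product graph in Lemma \ref{product-graph-lemma}: establish regularity directly, compute $A^2$ by counting common neighbours, and then read off the non-principal eigenvalues. First, since $V(SB_{q,d}(\lambda)) = \mathbbm{F}_q \times \mathbbm{F}_q^d$, we have $n = q \cdot q^d = q^{d+1}$. For regularity, fix $U = (a, \tmmathbf{b})$; a neighbour $(c, \tmmathbf{d})$ must satisfy $c = B(\tmmathbf{b}, \tmmathbf{d}) - a - \lambda$, so $\tmmathbf{d}$ ranges freely over $\mathbbm{F}_q^d$ and $c$ is then determined. Hence every vertex has exactly $q^d$ neighbours (this holds even when $\tmmathbf{b} = \tmmathbf{0}$), so the graph is $q^d$-regular and $\tmmathbf{1}$ is an eigenvector of the adjacency matrix $A$ with eigenvalue $q^d$.

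Next I would count common neighbours of two distinct vertices $U = (a, \tmmathbf{b})$ and $U' = (a', \tmmathbf{b}')$. A common neighbour $(c, \tmmathbf{d})$ satisfies $a + c + \lambda = B(\tmmathbf{b}, \tmmathbf{d})$ and $a' + c + \lambda = B(\tmmathbf{b}', \tmmathbf{d})$; subtracting gives the single linear condition $B(\tmmathbf{b} - \tmmathbf{b}', \tmmathbf{d}) = a - a'$. If $\tmmathbf{b} \neq \tmmathbf{b}'$ then, by non-degeneracy of $B$, the functional $\tmmathbf{d} \mapsto B(\tmmathbf{b} - \tmmathbf{b}', \tmmathbf{d})$ is nonzero, so this equation has $q^{d-1}$ solutions $\tmmathbf{d}$, each determining $c$; thus $U$ and $U'$ have $q^{d-1}$ common neighbours. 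If $\tmmathbf{b} = \tmmathbf{b}'$ (and $a \neq a'$) the condition reads $0 = a - a'$, which is impossible, so there are no common neighbours. Together with the diagonal count $(A^2)_{UU} = d(U) = q^d$, this yields
\[ A^2 = q^{d-1} J - q^{d-1} E + q^d I, \]
where $J$ is the all-ones matrix and $E$ is the adjacency matrix of the relation ``$\tmmathbf{b} = \tmmathbf{b}'$'', i.e. the block-diagonal matrix consisting of $q^d$ copies of the $q \times q$ all-ones block.

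Finally I would bound the non-principal eigenvalues. The matrix $E$ has eigenvalues $q$ (multiplicity $q^d$) and $0$ (multiplicity $q^d(q-1)$). Let $\tmmathbf{v}_\theta \in \tmmathbf{1}^\perp$ be an eigenvector of $A$ with eigenvalue $\theta$; then $J \tmmathbf{v}_\theta = 0$ and the displayed identity gives $q^{d-1} E \tmmathbf{v}_\theta = (q^d - \theta^2) \tmmathbf{v}_\theta$, so $\tmmathbf{v}_\theta$ is an eigenvector of $E$ with eigenvalue $(q^d - \theta^2)/q^{d-1} \in \{0, q\}$. The value $0$ forces $\theta^2 = q^d$, and the value $q$ forces $\theta^2 = 0$; in either case $|\theta| \leq q^{d/2} < \sqrt{2 q^d}$. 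Since every eigenvector other than $\tmmathbf{1}$ lies in $\tmmathbf{1}^\perp$, this also shows that $q^d$ is a simple eigenvalue, so $A$ has largest eigenvalue $q^d$ with all others bounded by $\sqrt{2 q^d}$ in absolute value, i.e. $SB_{q,d}(\lambda)$ is a $(q^{d+1}, q^d, \sqrt{2q^d})$-graph.

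The computation is entirely routine; the only point that requires care is the common-neighbour count, where one must use the non-degeneracy of $B$ to guarantee that the linear equation $B(\tmmathbf{b} - \tmmathbf{b}', \tmmathbf{d}) = a - a'$ has exactly $q^{d-1}$ solutions whenever $\tmmathbf{b} \neq \tmmathbf{b}'$, and to identify correctly the block structure of the correction term $E$. As in Lemma \ref{product-graph-lemma}, the stated bound $\sqrt{2q^d}$ is not tight---the argument in fact yields $|\theta| \leq q^{d/2}$---but $\sqrt{2q^d}$ is all that is needed for the applications in Section \ref{sp-result-sesb}.
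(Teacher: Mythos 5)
Your proof is correct and is essentially the paper's own argument: establish regularity, count common neighbours via the linear equation $B(\tmmathbf{b}-\tmmathbf{b}',\tmmathbf{d}) = a - a'$, write $A^2$ in terms of $J$, $I$, and the block matrix $E$, and bound the non-principal eigenvalues on $\tmmathbf{1}^{\bot}$. If anything, your bookkeeping is more careful than the paper's: the paper's displayed identity $A^2 = q^{d-1}J + (q^d - q^{d-1})I - E$, with $E$ the $0/1$ adjacency matrix of the relation ``$\tmmathbf{b}=\tmmathbf{d}$, $a \neq c$'', is missing a factor of $q^{d-1}$ on $E$ (your identity carries it correctly), and your exact determination of the spectrum of $E$ gives the sharp bound $|\theta| \leq q^{d/2}$, which the paper only observes parenthetically after deriving the weaker $\theta^2 < 2q^d$.
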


\begin{proof}
  It is easy to see that $S B_{q, d} (\lambda)$ is a regular graph of order
  $q^{d + 1}$ and valency $q^d$. We now compute the eigenvalues of this
  multigraph. For any $a, c \in \mathbbm{F}$ and
  $\tmmathbf{b} \neq \tmmathbf{d} \in \mathbbm{F}^d$, the system
  \[ a + u + \lambda = B (\tmmathbf{b}, \tmmathbf{v}), c + u + \lambda = B
     (\tmmathbf{d}, \tmmathbf{v}), \; u \in \mathbbm{F}, \tmmathbf{v} \in
     \mathbbm{F}^d \]
  has $q^{d - 1}$ solutions. (We can argue as follows. There are $q^{d - 1}$
  possibilities of $\tmmathbf{v}$ such that $B (\tmmathbf{b}-\tmmathbf{d},
  \tmmathbf{v}) = a - c$. For each choice of $\tmmathbf{v}$, there exists a
  unique $u$ satisfying the system.) If $\tmmathbf{b}=\tmmathbf{d}$ and $a
  \neq c$ then the system has no solution. Hence, for any two vertices $U =
  (a, \tmmathbf{b})$ and $V = (c, \tmmathbf{d}) \in V (S B_{q, d} (\lambda))$,
  if $\tmmathbf{b} \neq \tmmathbf{d}$ then $U$ and $V$ have exactly $q^{d -
  1}$ common neighbors, and if $\tmmathbf{b}=\tmmathbf{d}$ and $a \neq c$ then
  $U$ and $V$ have no common neighbor. Let $A$ be the adjacency matrix of $S
  B_{q, d} (\lambda)$. It follows that
  \begin{equation}\label{e20-sesb} A^2 = A A^T = q^{d - 1} J + (q^d - q^{d - 1}) I - E, \end{equation}
  where $J$ is the all-one matrix, $I$ is the identity matrix, and $E$ is the
  adjacency matrix of the graph $B_E$, where for any two vertices $U = (a,
  \tmmathbf{b})$ and $V = (c, \tmmathbf{d}) \in V (S B_{q, d} (\lambda))$,
  $(U, V)$ is an edge in $B_E$ if and only if $a \neq c$ and
  $\tmmathbf{b}=\tmmathbf{d}$. Since $S B_{q, d} (\lambda)$ is a $q^d$-regular
  graph, $q^d$ is an eigenvalue of $A$ with the all-one eigenvalue
  $\tmmathbf{1}$. The graph $S B_{q, d} (\lambda)$ is connected so the
  eigenvalue $q^d$ has multiplicity one. Besides, choose $\tmmathbf{b}, \tmmathbf{d} \in \mathbbm{F}_q^d$ such that $B
  (\tmmathbf{b}, \tmmathbf{d}) = 2 a \neq 0$, then $S B_{q, d} (\lambda)$
  contains the triangle with three vertices $(- a, \tmmathbf{0})$, $(a,
  \tmmathbf{b})$, and $(a, \tmmathbf{d})$. So the graph is not bipartite. Hence, for any other eigenvalue
  $\theta$, $| \theta | < q^d$. Let $\tmmathbf{v}_{\theta}$ denote
  the corresponding eigenvector of $\theta$. Note that $\tmmathbf{v}_{\theta}
  \in \tmmathbf{1}^{\bot}$, so $J\tmmathbf{v}_{\theta} = 0$. It follows from
  (\ref{e20-sesb}) that
  \[ (\theta^2 - q^d + q^{d - 1})\tmmathbf{v}_{\theta} =
     E\tmmathbf{v}_{\theta} . \]
  Hence, $\tmmathbf{v}_{\theta}$ is also an eigenvector of $E$. Since $B_E$ is
  a regular graph of order $q - 1$, the absolute value of any eigenvalue of
  $E$ is at most $q - 1$. This implies that
  \[ \theta^2 < q^d - q^{d - 1} + (q - 1) < 2 q^d . \]
  (Note that, one can get $\theta^2 < q^d$ when $d \geq 2$.) The lemma
  follows.
\end{proof}

\subsection{Proofs of Theorems \ref{sp-equation-sesb}, \ref{spe-sesb} and \ref{sp-system-sesb}}

Similarly, Theorem \ref{sp-equation-sesb} follows from Lemma \ref{sp-graph-lemma} and Corollary \ref{edge-c-sesb}; Theorem \ref{sp-system-sesb} follows
from Theorem \ref{a-c-sesb}, Theorem \ref{colored-subgraph-sesb} and Lemma \ref{sp-graph-lemma}. The proof of Theorem \ref{spe-sesb} remains. Without loss of generality, we can suppose that $0 \notin \mathcal{A}$. For any $\mathcal{A}
\subseteq \mathbbm{F}_q^{\ast}$, let $\mathcal{A}^{- 1} =\{1 / a : a \in
\mathcal{A}\} \subset \mathbbm{F}_q$, $\mathcal{E}_{\mathcal{A}} =
k\mathcal{A} \times (\mathcal{A} \cdot \mathcal{A})^{d - 1} \subset
\mathbbm{F}_q \times \mathbbm{F}_q^{d - 1}$, and $\mathcal{F}_{\mathcal{A}} =
(-\mathcal{A}) \times (1 /\mathcal{A})^{d - 1} \subset \mathbbm{F}_q \times
\mathbbm{F}_q^{d - 1}$.
Let $e (\mathcal{E}_{\mathcal{A}}, \mathcal{F}_{\mathcal{A}})$ be the number
of edges of $\mathcal{G}_{q, d}$ between $\mathcal{E}_{\mathcal{A}}$ and
$\mathcal{F}_{\mathcal{A}}$. It follows from Theorem \ref{spe-sesb} and Lemma \ref{sp-graph-lemma} that
\begin{equation} \label{e2-spb} e (\mathcal{E}_{\mathcal{A}}, \mathcal{F}_{\mathcal{A}}) \leq
   \frac{|\mathcal{E}_{\mathcal{A}} | |\mathcal{F}_{\mathcal{A}} |}{q} +
   \sqrt{2 q^{d - 1} |\mathcal{E}_{\mathcal{A}} | |\mathcal{F}_{\mathcal{A}}
   |} . \end{equation}
There is an edge between any two vertices $(a_1 + \ldots + a_d, a_1 b_1,
\ldots, a_{d - 1} b_{d - 1}) \in \mathcal{E}_{\mathcal{A}}$ and $(- a_d,
b_1^{- 1}, \ldots, b_{d - 1}^{- 1}) \in \mathcal{F}_{\mathcal{A}}$. Hence,
\begin{equation}\label{e3-spb} e (\mathcal{E}_{\mathcal{A}}, \mathcal{F}_{\mathcal{A}}) \geq
   |\mathcal{A}|^{2 d - 1} . \end{equation}
Putting (\ref{e2-spb}) and (\ref{e3-spb}) together, we have
\[ |\mathcal{A}|^{2 d - 1} \leq \frac{|\mathcal{A}|^d |\mathcal{A} \cdot
   \mathcal{A}|^{d - 1} |d\mathcal{A}|}{q} + \sqrt{q^d |\mathcal{A}|^d
   |\mathcal{A} \cdot \mathcal{A}|^{d - 1} |d\mathcal{A}|} . \]
Let $x = |\mathcal{A} \cdot \mathcal{A}|^{(d - 1) / 2} |d\mathcal{A}|^{1 /
2}$, then
\[ \frac{|\mathcal{A}|^d}{q} x^2 + q^{d / 2} x - |\mathcal{A}|^{2 d - 1} \geq
   0. \]
Solving this inequality gives us the desired bound for $x$, concluding the
proof of Theorem~\ref{spe-sesb}.

\section{Finite Euclidean graphs - Proofs of Theorems \ref{distance-pinned-sesb} and \ref{simplex-system-sesb}}

Let $Q$ be a non-degenerate quadratic form on $\mathbbm{F}_q^d$. For any $\lambda \in \mathbbm{F}_q$, the finite Euclidean
graph $E_q (d, Q, \lambda)$ is defined as the graph with vertex set $\mathbbm{F}_q^d$ and the edge
set

\begin{equation}
  E =\{(\tmmathbf{x}, \tmmathbf{y}) \in \mathbbm{F}_q^d \times \mathbbm{F}_q^d \, |\, \tmmathbf{x} \neq \tmmathbf{y}, \, Q (\tmmathbf{x} - \tmmathbf{y}) = \lambda\}.
\end{equation}

Recall that an $(n,d,\lambda)$-graph is called a Ramanujan graph if $\lambda \leq 2\sqrt{d-1}$. We also call an $(n,d,\lambda)$-graph asymptotic Ramanujan graph if  $\lambda \leq (2+o(1))\sqrt{d}$ when $n, d, \lambda \rightarrow \infty$. The spectrum of the finite Euclidean graphs $E_q(d,Q,\lambda)$ when $Q(\tmmathbf{x}) = x_1^2 +\ldots + x_d^2$ was first investigated by Medrano et al. \cite{medrano}, who proved that $E_q(d,Q,\lambda)$ are asymptotically Ramanujan for any $\lambda \neq 0$. Bannai, Shimabukuro and Tanaka \cite{bst} extended this result to arbitrary non-degenerate quadratic form $Q$ using the character tables of  association schemes of affine type (\cite{kwok}). The following theorem summaries the results from \cite[Sections~2-6]{bst} and \cite[Section 3]{kwok}.

\begin{theorem} (\cite{bst,kwok}) \label{euclidean graphs}
  Let $Q$ be a non-degenerate quadratic form on $\mathbbm{F}_q^d$. For any $\lambda \in \mathbbm{F}_q^{*}$, the graph $E_q (d, Q, \lambda)$ is a $(q^d, (1+o(1))q^{d-1}, 2q^{(d-1)/2})$-graph.
\end{theorem}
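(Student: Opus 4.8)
The plan is to exploit the fact that $E_q(d,Q,\lambda)$ is a Cayley graph on the additive group $\mathbbm{F}_q^d$ with connection set $S_\lambda = \{\tmmathbf{z}\in\mathbbm{F}_q^d : Q(\tmmathbf{z})=\lambda\}$; this set is symmetric because $Q(-\tmmathbf{z})=Q(\tmmathbf{z})$, and it avoids $\tmmathbf{0}$ since $\lambda\neq 0$, so the graph is a genuine loopless regular graph on $q^d$ vertices of degree $|S_\lambda|$. Fixing a nontrivial additive character $\psi$ of $\mathbbm{F}_q$ and identifying $\widehat{\mathbbm{F}_q^d}$ with $\mathbbm{F}_q^d$ via $\tmmathbf{m}\mapsto(\tmmathbf{x}\mapsto\psi(\tmmathbf{m}\cdot\tmmathbf{x}))$, the eigenvalues of the adjacency matrix are exactly the character sums
\[ \theta_{\tmmathbf{m}} = \sum_{\tmmathbf{z}\in S_\lambda}\psi(\tmmathbf{m}\cdot\tmmathbf{z}), \qquad \tmmathbf{m}\in\mathbbm{F}_q^d. \]
Thus $\theta_{\tmmathbf{0}}=|S_\lambda|$ is the degree, while $\lambda(E_q(d,Q,\lambda))=\max_{\tmmathbf{m}\neq\tmmathbf{0}}|\theta_{\tmmathbf{m}}|$, so the whole theorem reduces to estimating these two quantities.

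To evaluate $\theta_{\tmmathbf{m}}$ I would detect the constraint $Q(\tmmathbf{z})=\lambda$ by the identity $\mathbbm{1}[Q(\tmmathbf{z})=\lambda]=q^{-1}\sum_{t\in\mathbbm{F}_q}\psi(t(Q(\tmmathbf{z})-\lambda))$, which gives
\[ \theta_{\tmmathbf{m}} = \frac{1}{q}\sum_{t\in\mathbbm{F}_q}\psi(-t\lambda)\sum_{\tmmathbf{z}\in\mathbbm{F}_q^d}\psi\bigl(tQ(\tmmathbf{z})+\tmmathbf{m}\cdot\tmmathbf{z}\bigr). \]
The term $t=0$ contributes $q^{d-1}$ when $\tmmathbf{m}=\tmmathbf{0}$ and $0$ otherwise. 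For $t\neq 0$, since $q$ is odd and $Q$ is non-degenerate I can diagonalize $Q=\sum_i a_i z_i^2$ by a linear change of variables and complete the square, which factors the inner sum into one-dimensional quadratic Gauss sums. Using $\sum_z\psi(az^2)=\chi(a)g$ for $a\neq 0$, where $\chi$ is the quadratic character and $g$ is the quadratic Gauss sum with $|g|=\sqrt{q}$, the inner sum becomes $\chi(t)^d\chi(\Delta)g^d\,\psi(-Q^*(\tmmathbf{m})/(4t))$, with $\Delta=\prod_i a_i$ and $Q^*$ the (again non-degenerate) dual form. In particular its modulus is $q^{d/2}$, independent of $t$.

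Substituting back isolates a single sum over $t\neq 0$,
\[ \sum_{t\neq 0}\chi(t)^d\,\psi\Bigl(-t\lambda-\frac{Q^*(\tmmathbf{m})}{4t}\Bigr), \]
which is a Kloosterman sum when $d$ is even and a Salié sum when $d$ is odd; in both cases its absolute value is at most $2\sqrt{q}$ (by Weil's bound, e.g.\ \cite[Theorem 2E(i)]{schmidt}, respectively by the explicit Salié evaluation, with the degenerate sub-cases $Q^*(\tmmathbf{m})=0$ contributing only an $O(1)$ term). Hence for $\tmmathbf{m}\neq\tmmathbf{0}$,
\[ |\theta_{\tmmathbf{m}}|\le \frac{1}{q}\cdot q^{d/2}\cdot 2\sqrt{q}=2q^{(d-1)/2}, \]
which is the claimed second-eigenvalue bound. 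For $\tmmathbf{m}=\tmmathbf{0}$ one has $Q^*(\tmmathbf{0})=0$, so the $t$-sum collapses to $\sum_{t\neq 0}\chi(t)^d\psi(-t\lambda)$, of modulus $O(\sqrt{q})$, giving $|S_\lambda|=q^{d-1}+O(q^{(d-1)/2})=(1+o(1))q^{d-1}$, the required degree.

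The main obstacle is the careful bookkeeping of the Gauss-sum constants across the two parities of $d$: the odd-dimensional case produces Salié rather than Kloosterman sums, whose $2\sqrt{q}$ bound needs the explicit evaluation rather than a direct appeal to Weil, and one must check that the discriminant factor $\chi(\Delta)$ and the dual form $Q^*$ do not spoil the uniform modulus $q^{d/2}$ of the completed-square sum. Since the statement is only quoted here, an alternative that sidesteps this parity analysis is to read the two nontrivial eigenvalues directly off the character table of the affine association scheme attached to $Q$, exactly as in \cite{bst,kwok}; the bound $2q^{(d-1)/2}$ then follows uniformly in $d$ and in the isometry type of $Q$.
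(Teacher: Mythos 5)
Your proposal is correct, but it proves something the paper merely quotes: Theorem \ref{euclidean graphs} is stated with a citation to \cite{bst,kwok}, and the paper's ``proof'' consists of pointing to the character tables of association schemes of affine type computed there (with the special case $Q(\tmmathbf{x})=x_1^2+\ldots+x_d^2$ credited to Medrano et al.\ \cite{medrano}). What you do instead is give a self-contained spectral computation: view $E_q(d,Q,\lambda)$ as a Cayley graph on $(\mathbbm{F}_q^d,+)$, expand the sphere indicator by additive characters, evaluate the resulting quadratic exponential sums via Gauss sums after diagonalizing $Q$, and bound the remaining one-variable sum by the Weil/Sali\'e bound $2\sqrt{q}$ --- this is essentially the Medrano--Myers--Stark--Terras argument, and you check that it goes through uniformly for an arbitrary non-degenerate $Q$, which is exactly the generalization that \cite{bst} obtains by other means. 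Your route buys self-containedness and independence from the association-scheme machinery, at the cost of the Kloosterman/Sali\'e input; the paper's route buys brevity and sharper spectral data (exact eigenvalues rather than bounds) at the cost of importing the tables of \cite{bhs,kwok}. One small inaccuracy to repair: in the degenerate sub-case $Q^*(\tmmathbf{m})=0$ with $d$ odd, the $t$-sum $\sum_{t\neq 0}\chi(t)\psi(-t\lambda)$ is a Gauss sum of modulus exactly $\sqrt{q}$, not an $O(1)$ term as you claim; since $\sqrt{q}\leq 2\sqrt{q}$ this does not affect the final bound $2q^{(d-1)/2}$, but the parenthetical should be corrected.
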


We are now ready to prove Theorems \ref{distance-pinned-sesb} and \ref{simplex-system-sesb}. We consider the graph $G$ on the vertex set $\mathbbm{F}_q^d$.
Two distinct vertices $\tmmathbf{x}, \tmmathbf{y}$ are connected by an $\lambda$-colored edge if and only if $Q(\tmmathbf{x} - \tmmathbf{y}) = \lambda$ (note that we only use $(q-1)$ colors $\lambda \in \mathbbm{F}_q^*$). From Theorem \ref{euclidean graphs}, the graph $G$ is a $(q^d, (1+o(1))q^{d-1}, 2q^{(d-1)/2})$-colored graph with $(q-1)$ colors. Theorems \ref{distance-pinned-sesb} and \ref{simplex-system-sesb} follow immediately from Lemma \ref{e-pinned-sesb} and Theorem \ref{colored-subgraph-sesb}.

\section{Finite non-Euclidean graphs - Proof of Theorem \ref{simplex-sphere-sesb}}

Let $Q$ be a non-degenerate quadratic form on $\mathbbm{F}_q^d$. For each element $\tmmathbf{x} \in S^d(Q)$, we denote the
pair of antipodes on $S^d(Q)$ containing $\tmmathbf{x}$ by $[\tmmathbf{x}]$. Let
$\Omega$ be the set of 
pairs of antipodes on the unit $Q$-sphere (or equivalently, the lines  through them). For a fixed $\gamma \in \mathbbm{F}_q$, the finite
non-Euclidean graph $\mathcal{P}_{q, d} (\gamma)$ has the vertex set $\Omega$
and the edge set
\[ \{([\tmmathbf{x}], [\tmmathbf{y}]) \in \Omega \times \Omega \, | \, \tmmathbf{x} \neq \tmmathbf{y}, \, Q(\tmmathbf{x} - \tmmathbf{y})= 2 \pm \gamma\}. \]
We will see that our graphs are the same as ones of Bannai, Hao, and Song in \cite{bhs}, and
of Bannai, Shimabukuro, and Tanaka in \cite{bst}.

Note that $\Omega$ can also be viewed as the set of all square-type non-isotropic
one-dimensional subspaces of $\mathbbm{F}_q^d$ with respect to the quadratic form $Q$. The simple orthogonal group $O_d
(\mathbbm{F}_q)$ acts transitively on $\Omega$ and yields a symmetric
association scheme $\Psi (O_d (\mathbbm{F}_q), \Omega)$ of class $(q + 1) /
2$. We have two cases.

Case I. Suppose that $d = 2m+1$. The relations of $\Psi(O_{2m+1}(\mathbbm{F}_q),\Omega)$ are given by
\begin{eqnarray*}
 R_1 & = & \{([\tmmathbf{x}],[\tmmathbf{y}]) \in \Omega \times \Omega \mid  Q(\tmmathbf{x}+\tmmathbf{y})  = 0\},\\
 R_i & = & \{([\tmmathbf{x}],[\tmmathbf{y}]) \in \Omega \times \Omega \mid  Q(\tmmathbf{x}+\tmmathbf{y})  = 2 + 2 \nu^{- (i
  - 1)} \} \, (2 \leq i \leq (q - 1) / 2)\\
 R_{(q+1)/2} & = & \{([\tmmathbf{x}],[\tmmathbf{y}]) \in \Omega \times \Omega \mid  Q(\tmmathbf{x}+\tmmathbf{y}) = 2\},
\end{eqnarray*}
where $\nu$ is a generator of the field $\mathbbm{F}_q$ (see \cite[Section 4]{bhs}).

Case II. Suppose that $d = 2m$. The relations of $\Psi(O_{2m}(\mathbbm{F}_q),\Omega)$ are given by
\begin{eqnarray*}
 R_i & = & \{([\tmmathbf{x}],[\tmmathbf{y}]) \in \Omega \times \Omega \mid  Q(\tmmathbf{x}+\tmmathbf{y}) = 2 + 2^{-1} \nu^{i} \} \, (1 \leq i \leq (q - 1) / 2)\\
 R_{(q+1)/2} & = & \{([\tmmathbf{x}],[\tmmathbf{y}]) \in \Omega \times \Omega \mid  Q(\tmmathbf{x}+\tmmathbf{y}) = 2\},
\end{eqnarray*}
where $\nu$ is a generator of the field $\mathbbm{F}_q$ (see \cite[Section 6]{bhs} and \cite[Section 2]{bst}).

The graphs $(\Omega, R_i)$ are not Ramanujan in general. They, however, are asymptotic Ramanujan for large $q$. The following theorem can be
derived easily as in the proofs of \cite[Theorem 2.2]{bst} and \cite[Theorem 5.1]{bst} from the
character tables of the association scheme $\Psi (O_d (\mathbbm{F}_q),
\Omega)$ (\cite[Tables VI, VII]{bhs} and \cite[Theorem 6.3]{bhs}). Note that \cite[Theorem
2.2]{bst} requires an additional restriction that $q = p^r$, where the exponent $r$
is odd. This restriction assures that the graphs $(\Omega, R_i)$ $(2 \leq
i \leq (q + 1) / 2$) are Ramanujan if $q$ is sufficiently large. Since we
only need our graphs to be asymptotic Ramanujan, we can apply \cite[Eq. (3)]{bst} instead
of \cite[Lemma 2.1]{bst} in the proof of \cite[Theorem 2.2]{bst} to remove this restriction.

\begin{theorem}(\cite{bhs,bst})\label{mt-bst} The graphs $(\Omega,R_i)$ $(2 \leq i \leq (q-1)/2)$ are regular of order $q^{d-1}(1+o(1))/2$ and valency $q^{d-2}(1+o(1))$. Let $\lambda$ be any eigenvalue of the graph $(\Omega,R_i)$ with $\lambda \neq$ valency of the graph, then 
\[|\lambda| \leq (2+o(1))q^{(d-2)/2}.\]
\end{theorem}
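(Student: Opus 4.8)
The plan is to exploit the fact that each $(\Omega, R_i)$ is a relation graph of the symmetric association scheme $\Psi(O_d(\mathbbm{F}_q), \Omega)$, so that its entire spectrum can be read off directly from the first eigenmatrix (the $P$-matrix) of the scheme. Since $O_d(\mathbbm{F}_q)$ acts transitively on $\Omega$ and the relations $R_i$ are exactly its orbitals, the adjacency matrices $A_1, \ldots, A_{(q+1)/2}$ lie in the Bose--Mesner algebra and are simultaneously diagonalizable; the eigenvalues of $A_i$ are precisely the entries of the $i$-th column of $P$, with the valency appearing as the entry on the trivial eigenspace and the remaining entries as the non-trivial eigenvalues. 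These entries have already been computed in \cite[Tables VI, VII]{bhs} and \cite[Theorem 6.3]{bhs}, so the task reduces to extracting the relevant values and estimating them asymptotically, exactly as in \cite[Theorem 2.2]{bst} and \cite[Theorem 5.1]{bst}.

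First I would record the order and valency. The cardinality $|\Omega|$ is the number of pairs of antipodes among the square-type non-isotropic one-dimensional subspaces; a standard count of points of a fixed type on a non-degenerate quadric, divided by $2$ for the antipodal identification, gives $|\Omega| = (1 + o(1)) q^{d-1}/2$. The valency of $(\Omega, R_i)$ is the number of $[\tmmathbf{y}]$ for which $Q(\tmmathbf{x}+\tmmathbf{y})$ lies in the prescribed class; fixing $[\tmmathbf{x}]$ and solving this single quadratic condition on the sphere leaves $(1+o(1)) q^{d-2}$ choices. Both counts must be carried out separately for $d = 2m+1$ and $d = 2m$ (matching the two lists of relations above), but in each parity case the dominant term agrees, which is all that is required.

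The heart of the matter is the eigenvalue bound. The non-trivial $P$-matrix entries of $\Psi(O_d(\mathbbm{F}_q),\Omega)$ are, after normalization, Gauss-- and Kloosterman-type character sums attached to the quadratic form $Q$; this is precisely the content of the character-table computation in \cite{bhs}. Applying Weil's bound to these sums yields $|\lambda| \leq (2 + o(1)) q^{(d-2)/2}$ for every eigenvalue $\lambda$ other than the valency, which is the asserted Ramanujan-type estimate. Once the table entries from \cite{bhs} are in hand, this step follows the arguments of \cite[Theorem 2.2]{bst} and \cite[Theorem 5.1]{bst} essentially verbatim.

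The main obstacle, and the only place where genuine care beyond \cite{bst} is needed, is removing the hypothesis that $q = p^r$ with $r$ odd. In \cite[Theorem 2.2]{bst} that hypothesis is imposed so that \cite[Lemma 2.1]{bst} can deliver the \emph{exact} Ramanujan bound $|\lambda| \leq 2\sqrt{d-1}$ for the relation graphs. Since Theorem \ref{mt-bst} asks only for the asymptotic bound $|\lambda| \leq (2+o(1))q^{(d-2)/2}$, I would replace the appeal to \cite[Lemma 2.1]{bst} by the weaker asymptotic estimate \cite[Eq. (3)]{bst}, which is valid for all prime powers $q$ with no restriction on the exponent. Threading this substitution through both parity cases and verifying that the resulting error terms are uniformly $o(1)$ relative to the main terms as $q \to \infty$ completes the proof.
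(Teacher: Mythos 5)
Your proposal is correct and takes essentially the same route as the paper: both derive the order, valency, and eigenvalue bound from the character tables of $\Psi(O_d(\mathbbm{F}_q),\Omega)$ in \cite[Tables VI, VII]{bhs} and \cite[Theorem 6.3]{bhs}, following the arguments of \cite[Theorem 2.2]{bst} and \cite[Theorem 5.1]{bst}. In particular, your key observation --- that the restriction $q=p^r$ with $r$ odd can be dropped by substituting the asymptotic estimate \cite[Eq. (3)]{bst} for the exact Ramanujan bound of \cite[Lemma 2.1]{bst} --- is precisely the modification the paper itself makes.
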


Let $G$ be a graph with the vertex set $\Omega$, and the edge set is colored by $\{R_i\}_{2\leq i \leq (q-1)/2}$.  Theorem \ref{mt-bst} implies that $G$ is a $(q^{d-1}(1+o(1))/2, q^{d-2}(1+o(1)),(2+o(1))q^{(d-2)/2})$-colored graph with $(q-3)/2$ colors. Theorem \ref{simplex-sphere-sesb} now follows immediately from Theorem \ref{colored-subgraph-sesb}.

\end{document}